\def\XXint#1#2#3{{\setbox0=\hbox{$#1{#2#3}{\int}$}
\vcenter{\hbox{$#2#3$}}\kern-.5\wd0}}
\newtheorem{lem}{Lemma}
\newtheorem{rem}{Remark}
\newtheorem{cor}{Corollary}
\newtheorem{oq}{Open problem}
\newtheorem{Th}{Theorem}
\newtheorem{Prop}{Proposition}
\newtheorem{Co}{Corollary}
\newtheorem{Dfi}{Definition}
\newtheorem{Rm}{Remark}
\newcommand{\be}{\begin{equation}}
\newcommand{\ee}{\end{equation}}
\newcommand{\R}{\mathbb{R}}
\newcommand{\C}{\mathbb{C}}
\newcommand{\reset}{\setcounter{equation}{0}\setcounter{Th}{0}\setcounter{Prop}{0}\setcounter{Co}{0}
\setcounter{Lm}{0}\setcounter{Rm}{0}}
\def\lf{\left}
\def\rg{\right}
\def\la{\lambda}
\def\ep{\varepsilon}
\def\ov{\overline}
\def\Om{\Omega}
\def\om{\omega}
\def\p{\partial}
\begin{document}
\title{A variational approach to $S^1$-harmonic maps \\
and applications}
\author{Filippo Gaia\footnote{Department of Mathematics, ETH Zentrum,
CH-8093 Z\"urich, Switzerland.} $\, $ and Tristan Rivi\`ere\footnote{Department of Mathematics, ETH Zentrum,
CH-8093 Z\"urich, Switzerland.}}
\maketitle

{\bf Abstract: }{\it We present  a renormalization procedure for the Dirichlet Lagrangian for maps from surfaces with or without boundary into $S^1$, whose finite energy critical points are the $S^1$-harmonic maps with isolated singularities.
We give some applications of this renormalization scheme in two different frameworks. The first application has to do with the renormalization of the Willmore energy for Lagrangian singular immersions into K\"ahler-Einstein surfaces while the second application is dealing with frame energies for surfaces immersions into Euclidian spaces.  }
\medskip

\noindent{\bf Math. Class.} 58E20, 58J05
\section{Introduction}
\subsection{$S^1-$harmonic maps with point singularities}
A map $u\in C^\infty(B^n,S^{m-1})$ is by definition a smooth harmonic map from an $n-$dimensional Euclidian ball into a $m-1$ dimensional sphere in ${\R}^m$ if
\be
\label{I.1}
\Delta u\wedge u=0\quad\mbox{ in }B^n\ .
\ee
where $\Delta$ is denoting the standard negative Laplacian on $B^n$.
Equation (\ref{I.1}) can be interpreted as follows: at any point $x\in B^n$ the Laplacian of the map $u$ is orthogonal to the tangent space of the sphere at $u(x)$:
\be
\label{I.2}
\forall \ x\in B^n\quad \Delta\, u\perp T_{u(x)}S^{m-1}\ .
\ee
This condition generates a non-linear equation known as the {\it harmonic map equation}
\be
\label{I.2-a}
-\Delta u= u |\nabla u|^2\quad\mbox{ in }B^n\ .
\ee
This equation is in fact {\it variational} in the sense that it is the {\it Euler-Lagrange Equation} of the {\it Dirichlet Energy}
\be
\label{I.3}
E(u):=\frac{1}{2}\int_{B^n}|\nabla u|^2\ dx^n\ .
\ee
More precisely smooth solutions to (\ref{I.2}) are smooth critical points of $E$ among maps taking values into $S^{m-1}$ for the following variations
\be
\label{I.4}
\forall \varphi\in C^\infty_0(B^n,{\R}^m)\quad\lf.\frac{d}{dt}\rg|_{t=0}E\lf(\frac{u+t\varphi}{|u+t\,\varphi|}\rg)=0\ .
\ee
Considering harmonic maps which are smooth exclusively is bringing to numerous limitations. For instance, given a smooth map $g_0\ :\ \p B^3\rightarrow S^2$ of topological degree 0
it is still unknown if the following problem has a smooth solution
\begin{align}\label{I.5}
\begin{cases}
-\Delta u=u\lvert \nabla u\rvert^2&\text{ in }B^3\\[5mm]
u=g_0&\text{ on }\partial B^3\ ,
\end{cases}
\end{align}
while it has been proved (see \cite{SU}) by a pure minimization procedure in the Sobolev space $W^{1,2}_{g_0}(B^3,S^2)$ of maps with finite Dirichlet energy and trace equal to $g_0$ that there exists a solution to (\ref{I.5}) with isolated singularities. In fact, there exist boundary data $g_0$ for which any minimizer must have point singularities (see \cite{HL}). These singularities  have a unique {\it tangent cone} of the form
\be
\label{I.5-a}
u_0\, :\, x\in B^3\, \mapsto\frac{x}{|x|}\in S^2\ ,
\ee
modulo the composition by an isometry $R\in O(3)$.
Observe that in one dimension lower the map
\be
\label{I.5-b}
u_0\, :\, z\in D^2\, \mapsto\,\frac{z}{|z|}\in S^1\ .
\ee
is still harmonic away from the origin (in the sense above) but is missing to have finite Dirichlet energy by very little since $|\nabla u_0|\simeq |x|^{-1}$ and only belongs to the {\it Weak Marcinkiewicz Space} or {\it Lorentz Space} $L^{2,\infty}(D^2)$ (see the beginning of section II) but does not belong to $L^2(D^2)$. Nevertheless $u_0$ satisfies a weak version of (\ref{I.1}) in the form
\be
\label{I.6}
\mbox{div}(u\wedge\nabla u)=0\quad\mbox{ in }{\mathcal D}'(D^2)\ .
\ee
The weak solutions to (\ref{I.6}) with point singularities were until now considered as ``semi variational'' in the sense that they solve a weak version of an Euler-Lagrange equation for a Lagrangian, the Dirichlet energy of maps into the circle, which is infinite for these
solutions. In the pioneer work on the subject by F.Bethuel, H.Brezis and F.H\'elein \cite{BBH} these $S^1-$harmonic maps are obtained as weak limit of critical points of the Ginzburg-Landau energy
\be
\label{I.7}
E_\ep(u):=\frac{1}{2}\int_{D^2}|\nabla u|^2+\frac{1}{4\,\ep^2}(1-|u|^2)^2\ dx^2\ .
\ee
One of the drawbacks of these variational formulations is that they are requiring a renormalization procedure due to the asymptotic production of infinite energy and this renormalization can generate tedious and lengthy analysis. Another drawback from these approaches comes from the difficulty to deal with ``natural'' boundary conditions. 

The main purpose of the present work is to remedy to these difficulties and to present a \underbar{direct variational formulation} of singular solutions to (\ref{I.6}). This formulation in particular enable to treat more general boundary conditions than the ones considered in the Ginzburg-Landau theory.
The original motivation for our work is related to the theory of {\it Hamiltonian stationary Lagrangian surfaces} and is explained in section \ref{The hamiltonian stationary condition and Schoen-Wolfson isolated singularities}.

In order to explain our approach we take the simplest framework of maps from ${\C}$ into $S^1$. Let $g\,:\, {\C}\rightarrow S^1\subset {\C}$ such that $\nabla g\in L^{2,\infty}(\C)$. We proceed to the 
Hodge decomposition\footnote{By $g^{-1}\,\nabla g$ we denote the complex multiplication of $g^{-1}$ with $\p_{x_1} g$ and $\p_{x_2}g$ respectively.} in $L^{2,\infty}({\C})$ of $g^{-1}\nabla g$ : there exist two real valued functions  $a_g$ and $b_g$ with $\nabla a_{g}$ and $\nabla b_g$ in $L^{2,\infty}({\C})$ such that
\be
\label{I.8}
g^{-1}\nabla g=i\,\nabla^\perp a_g+i\,\nabla b_g\ ,
\ee
where $\nabla^\perp\cdot:=(-\p_{x_2}\cdot,\p_{x_1}\cdot)$. Using complex coordinates, (\ref{I.8}) becomes also
\be
\label{I.9}
g^{-1}\frac{\p g}{\p \ov{z}}=-\,\frac{\p a_g}{\p \ov{z}}+i\,\frac{\p b_g}{\p \ov{z}}
\ee
which is equivalent\footnote{This computation is performed as a matter of illustration and at this stage we implicitly assume enough regularity on $a_g$ and $b_g$ for $e^{a_g}$ to define a distribution and the chain rule  to hold.} to
\be
\label{I.10}
\frac{\p}{\p\ov{z}}\lf(  e^{a_g-ib_g}\, g  \rg)=0
\ee
For instance, let's assume that $g$ is an $S^1-$harmonic map satisfying (\ref{I.6}) equal to the following product of elementary maps of the form (\ref{I.5-b})  
\be
\label{I.11}
{ g}(z):=\prod_{j=1}^Q \frac{z-p_j}{|z-p_j|}\,\prod_{j=1}^Q \frac{|z-q_j|}{z-q_j}
\ee
on $\mathbb{C}$.
This gives
\[
{ g}^{-1}\nabla{ g}=i\,\sum_{j=1}^Q  \frac{\nabla^\perp|z-p_j|}{|z-p_j|}-i\,\sum_{j=1}^Q   \frac{\nabla^\perp|z-q_j|}{|z-q_j|}\ .
\]
Thus
we can choose $b_{ g}=0$ and
\[
a_{ g}:=\sum_{j=1}^Q\log\frac{|z-p_j|}{|z-q_j|}\ ,
\]
and
\be
\label{I.12}
e^{a_{ g}}{ g}=\prod_{j=1}^Q \frac{z-p_j}{z-q_j}\ .
\ee
is meromorphic. It is then natural to introduce a ``regularization'' of this map by taking its inverse stereographic projection into ${\C}P^1$ :
\begin{align}\label{Equation: Definition of the map ug}
u_g:=\pi^{-1}\lf(e^{a_{ g}}{ g}\rg)
\end{align}
The map we have generated through this procedure is a {\it conformal harmonic map} into $S^2$. We will call it the ``$S^2$ lift'' of the $S^1$ valued map $g$.

We then naturally introduce the following definitions.

\begin{Dfi}
\label{df-I.1}
Let $g\in W^{1,p}(D^2, S^1)$ for some $p>1$.
Let $b_g$ be the unique solution to
\begin{align}\label{I.13}
\begin{cases}
\Delta b_g=\operatorname{div}\left(-ig^{-1}\nabla g\right)&\text{ in }D^2\\[5mm]
b_g=0&\text{ on }\partial D^2.
\end{cases}
\end{align}
%
Let $a_g\in W^{1,p}(D^2,{\R})$ with average 0 on $D^2$ and $u_g$ such that
\begin{align}\label{Equation: Definition of ag}
\nabla^\perp a_g=-ig^{-1}\nabla g-\nabla b_g\ \quad\mbox{ and }\quad\ u_g:=\pi^{-1}\lf(e^{a_{ g}}{ g}\rg)\ ,
\end{align}
where $\pi$ is the stereographic projection from $S^2$ into ${\C}$ sending the north pole to zero. We call $u_g$ the ``$S^2$ lift of $g$''. 
We introduce the ``renormalized Dirichlet Energy'' of $g$ the be the following energy
\[
{\mathcal E}(g):=\frac{1}{4}\int_{D^2}\left(|\nabla u_g|^2+|\nabla b_g|^2\right) dx^2\ .
\]
\hfill $\Box$
\end{Dfi}
\begin{Rm}
\label{rm-I.1}
Observe that the previous definition given for the flat disc extends word by word to the simply connected Riemann surfaces  ${\C}$ and ${\C}P^1$. This is also the case for a general closed Riemann surface $\Sigma$ with or without boundary, modulo the addition of the $L^2$ norms of the harmonic forms involved in the Hodge decomposition of $g^{-1} dg$.
\end{Rm}

The motivation for the denomination ``renormalized Dirichlet Energy'' is justified by the following fact: assume that $g\in W^{1,2}(D^2,S^1)$ is equal to 1 on the boundary, then $g$ admits a lift $\phi\in W^{1,2}_0(D^2,{\R})$
such that $g=e^{i\phi}$. Thus $b_g=\phi$ and 
\[
{\mathcal E}(g)=E(g)=\frac{1}{2}\int_{D^2}|\nabla g|^2\ dx^2.
\]
The advantage of ${\mathcal E}$ over $E$ is that it can be finite still allowing $g$ to have point singularities of the form (\ref{I.5-b}). In particular, if $g$ is defined as in (\ref{I.11})
then the renormalized energy $\mathcal{E}$ (computed as an integral over the whole $\mathbb{C}$) is equal to
\begin{align*}
\mathcal{E}(g)=2\pi Q
\end{align*}
for any choice of points $p_1,...,p_Q, q_1,..., q_Q$ in $\mathbb{C}$.\\
To see this observe that 
since $u_g$ is holomorphic\footnote{Notice that the map $u_g$ can be extended to an holomorphic map on $\mathbb{C}P^1$, therefore its degree is well defined.}
\begin{align*}
\frac{1}{4}\int_{\mathbb{C}}\lvert\nabla u_g\rvert^2\, dx^2=\frac{1}{2}\int_{\mathbb{C}}u_g^*dvol_{S^2}=\frac{1}{2}4\pi\operatorname{deg}(u_g)
\end{align*}
and by considering the preimages of points in a neighbourhood of the north or the south pole we see that
\begin{align*}
\operatorname{deg}(u_g)=Q.
\end{align*}
We recall from \cite{Demengel}, \cite{BMP} and that the obstruction for approximating strongly an arbitrary map $g\in W^{1,1}(D^2, S^1)$ by smooth maps into $S^1$  is given by the distribution
\[
\text{div}(i\,g^{-1}\nabla^\perp g)\ .
\]
More precisely, it is proven in  \cite{BMP} (Theorem 3') that for such a map  $g\in W^{1,1}(D^2, S^1)$
there exists an at most countable family of pairs of points $p_i\in \overline{D^2}$ and integers $d_i$ such that
\[
\text{div}(i\,g^{-1}\nabla^\perp g)=2\pi\sum_{i\in I} d_i\delta_{p_i}\,,
\]
and the convergence has to be understood in the sense that there exists an at most countable family of segments with integer multiplicity such that, denoting $J$ the associated integer rectifiable current,
\[
\partial J=\sum_{i\in I} d_i\delta_{p_i}\quad\mbox{ and }\quad \inf\lf\{M(J)\ ;\ \partial J=\sum_{i\in I} d_i\delta_{p_i}\rg\}\le \int_{D^2}|\nabla g|\ dx^2\ .
\]
Here $M(J)$ denotes the mass of the $1-$current $J$.\\
We have moreover for any sequence $g_k\in C^\infty(D^2,S^1)$ satisfying
\[
g_k\to g\text{ a.e. }
\]
\[
\int_{D^2}|\nabla g|\ dx^2+2\pi\,\inf\lf\{M(J)\ ;\ \partial J=\sum_{i\in I} d_i\delta_{p_i}\rg\}\le \liminf_{k\rightarrow+\infty}\int_{D^2}|\nabla g_k|\ dx^2
\]
and the inequality is optimal for any $g$ (see Theorem 1'
in \cite{BMP}).\\
For a detailed description of maps in $W^{1,1}(\partial D^2, S^1)$ we refer to \cite{SM circle} (especially Chapters 1 and 2).

The mass distribution $$\sum_{i\in I} d_i\delta_{p_i}$$ is also called {\bf topological singular set of $g$} (see \cite{HR}). 

\medskip

In the present work we are interested in the subspace of maps in $W^{1,p}(D^2,S^1)$ such that this mass distribution is discrete.
\begin{Dfi}[Isolated/finite topological singularities]
Let $g\in W^{1,p}(D^2, S^1)$ for some $p>1$. Assume that
\begin{align}\label{Equation: Assumption on gdg}
\operatorname{div}\,(i\,g^{-1}\nabla^\perp g)=2\pi\sum_{i\in I} d_i\,\delta_{p_i}\,,
\end{align} 
where $I$ is an at most countable index set and for any $i\in I$ $p_i\in D^2$ and $d_i\in \mathbb{Z}$. Assume also that the points $p_i$ in $D^2$ are isolated. Any such map will be referred to as a \textbf{$W^{1,p}$ $S^1$-valued map with isolated topological singularities} or \textbf{$S^1$-valued map with discrete topological singular set}.\\
Now assume that $g_0:=g\vert_{\partial D^2}\in W^{1,1}(\partial D^2, S^1)$.
Assume that there exist $Q\in \mathbb{N}$, $p_i\in \overline{D^2}$  $d_i\in \mathbb{Z}$ for any $i\in \{1,..., Q\}$ so that
\begin{align}
\nonumber
i\int_{\partial D^2}g_0^{-1}\partial_{\theta}g_0+2\pi\sum_{\substack{i=1\\ p_i\in D^2}}^{Q} d_i+\pi\sum_{\substack{i=1\\ p_i\in \partial D^2}}^{Q} d_i=0
\end{align}
and assume that for any $p_i\in \partial D^2$ $d_i$ is even\footnote{This assumption will be clarified in Remark \ref{Remark: d_i even for points on the boundary}.}.\\
Assume that for any $\phi\in C^\infty(\overline{D^2})$
\begin{align}
\nonumber
\int_{D^2}i\,g^{-1}\nabla^\perp g \nabla\phi =-i\int_{\partial D^2}\phi\, g_0^{-1}\partial_\theta g_0-2\pi\sum_{\substack{i=1\\ p_i\in D^2}}^{Q}d_i\,\phi(p_i)-\pi\sum_{\substack{i=1\\ p_i\in \partial D^2}}^{Q} d_i\,\phi(p_i).
\end{align}
In this case we refer to the points $p_i\in \overline{D^2}$ as \textbf{topological singularities} of $g$ and we say that \textbf{$g$ has finitely many topological singularities in $\overline{D^2}$}.\hfill$\Box$
\end{Dfi}
The following theorem, which is one of the main results of the present work, gives the sequential weak completeness of $S^1$-valued maps with discrete topological singular set under controlled ``renormalized Dirichlet Energy'' and the sequential weak completeness of $S^1$-valued maps with finitely many topological singularities in $\overline{D^2}$ under controlled ``renormalized Dirichlet Energy''  and  controlled $W^{1,1}$-norm at the boundary.
\begin{Th}
\label{th-I.1}

\begin{enumerate}
\item[a)] Let $(g_k)_{k\in \mathbb{N}}$ be a sequence of maps in $W^{1,(2,\infty)}_{loc}(D^2, S^1)$ uniformly bounded in $W^{1,p}(D^2, S^1)$ for some $p>1$ and such that for any $k\in \mathbb{N}$ $g_k$ has isolated topological singularities in $D^2$.\\
Assume that
\be
\label{I.15}
\limsup_{k\rightarrow +\infty}{\mathcal E}(g_k)<+\infty\ .
\ee
Then there exists a subsequence $g_{k'}$ and a map $g_\infty\in W^{1,(2,\infty)}_{loc}\cap W^{1,p}(D^2,S^1)$ with isolated topological singularities such that 
\be
\label{I.16}
{\mathcal E}(g_\infty)\le\liminf_{k'\rightarrow +\infty}{\mathcal E}(g_k') \quad\mbox{ and }\quad \nabla g_{k'}\rightharpoonup \nabla g_\infty\quad\mbox{ weakly in }L^{(2,\infty)}_{loc}(D^2)\ .
\ee
\hfill $\Box$
\item[b)] Let $g_0\in W^{1,1}\cap H^\frac{1}{2}(\partial D^2, S^1)$.
Let $(g_k)_{k\in \mathbb{N}}$ be a sequence of $S^1$-valued map in $W^{1,p}(D^2, S^1)$ for some $p>1$ (where $p$ might depend on $k$) with finitely many topological singularities in $\overline{D^2}$ and with trace equal to $g_0$ on $\partial D^2$.
Assume that
\begin{align}\label{Equation: Sequence with bounded energy}
\limsup_{k\rightarrow +\infty}\mathcal{E}(g_k)<\infty.
\end{align}
Then there exists a subsequence $g_{k'}$ and a $W^{1,(2,\infty)}$ $S^1$-valued map $g_\infty$ with finitely many topological singularities in $\overline{D^2}$ such that
$$\mathcal E(g_\infty)\leq\liminf_{k'\rightarrow +\infty}{\mathcal E}(g_k) \quad\mbox{ and }\quad \nabla g_{k'}\rightharpoonup \nabla g_\infty\quad\mbox{ weakly in }L^{2,\infty}_{loc}(D^2).$$
\end{enumerate}
\hfill $\Box$
\end{Th}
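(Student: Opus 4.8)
The plan is to use the splitting built into Definition~\ref{df-I.1}, which writes the renormalized energy as $\mathcal{E}(g)=\frac14\int_{D^2}|\nabla u_{g}|^2+\frac14\int_{D^2}|\nabla b_{g}|^2$, the sum of a contribution from the $S^2$-lift $u_g$ (a map into a compact manifold) and a contribution from the scalar potential $b_g$ (governed by the linear problem (\ref{I.13})). The two hypotheses (\ref{I.15}) and (\ref{Equation: Sequence with bounded energy}) give uniform bounds $\|\nabla u_{g_k}\|_{L^2}+\|\nabla b_{g_k}\|_{L^2}\le C$. Since $b_{g_k}$ vanishes on $\partial D^2$, Poincar\'e's inequality and reflexivity produce, after extraction, a weak limit $b_{g_k}\rightharpoonup b_\infty$ in $W^{1,2}_0(D^2)$; since $u_{g_k}$ takes values in the bounded set $S^2$, it is bounded in $W^{1,2}(D^2,\R^3)$ and, by Rellich, a subsequence converges weakly in $W^{1,2}$ and strongly in every $L^q$, $q<\infty$, to a limit $u_\infty$ with $|u_\infty|=1$ a.e.\ , i.e.\ again an $S^2$-valued map.

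The crucial point is to bound the number of singularities of $g_k$ uniformly in $k$. Writing $F_k:=e^{a_{g_k}}g_k=\pi(u_{g_k})$, the potential $a_{g_k}$ is the logarithmic potential of $2\pi\sum_i d_i^{(k)}\delta_{p_i^{(k)}}$, so the positive (resp.\ negative) singularities of $g_k$ are precisely the preimages of the north (resp.\ south) pole under $u_{g_k}$. A topological lower bound of the form $\frac14\int_{D^2}|\nabla u_{g_k}|^2\ge 2\pi\sum_{d_i^{(k)}>0}d_i^{(k)}$ then shows that $\mathcal{E}(g_k)$ controls the positive charge; applying the same estimate to $g_k^{-1}$, whose lift is the reflection of $u_{g_k}$ and hence carries the same energy $\mathcal{E}(g_k^{-1})=\mathcal{E}(g_k)$, controls the negative charge $\sum_{d_i^{(k)}<0}|d_i^{(k)}|$. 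Since each $|d_i^{(k)}|\ge1$, the number $N_k$ of singular points is bounded by a fixed multiple of $\sup_k\mathcal{E}(g_k)$. In part (a) the same count follows directly from the uniform $W^{1,p}$ bound (necessarily with $1<p<2$ once singularities are present), because each singularity contributes at least a fixed positive amount to $\int_{D^2}|\nabla g_k|^p$; in part (b), where only an energy bound and the fixed boundary datum are available, the quantization estimate is indispensable, and the charge-balance identity with the fixed $g_0$ pins down the admissible total degree. After a further extraction we may assume $N_k\equiv N$, the points $p_i^{(k)}\to p_i^\infty\in\overline{D^2}$ and the degrees $d_i^{(k)}\to d_i^\infty$; colliding points are grouped and their degrees added, and interior points reaching $\partial D^2$ drop out of the interior singular set. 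The limiting singular set is thus finite, hence discrete in $D^2$ in case (a) and finite in $\overline{D^2}$ in case (b).

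It then remains to identify the weak limits with the objects associated to a single limit map and to pass to the limit in the whole decomposition. From $g_k^{-1}\nabla g_k=i\nabla^\perp a_{g_k}+i\nabla b_{g_k}$ together with the convergence of the potentials, $\nabla a_{g_k}$ converges weakly-$\ast$ in $L^{2,\infty}_{loc}$ away from $\{p_i^\infty\}$, the uniform bound $\|\nabla a_{g_k}\|_{L^{2,\infty}}\lesssim\sum_i|d_i^{(k)}|\le C$ preventing concentration; multiplying by the a.e.\ convergent factor $g_k$ yields $\nabla g_k\rightharpoonup\nabla g_\infty$ weakly-$\ast$ in $L^{2,\infty}_{loc}$, where $g_\infty:=\pi(u_\infty)/|\pi(u_\infty)|$ is $S^1$-valued. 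Passing to the limit in (\ref{I.13}), the distributional convergence of $\operatorname{div}(-ig_k^{-1}\nabla g_k)$ forces $b_\infty$ to solve (\ref{I.13}) for $g_\infty$ with zero trace, so $b_\infty=b_{g_\infty}$ by uniqueness; likewise $e^{a_{g_k}}g_k\to e^{a_{g_\infty}}g_\infty$ gives $u_\infty=u_{g_\infty}$. The membership $g_\infty\in W^{1,(2,\infty)}_{loc}$ follows from $\nabla b_\infty\in L^2$ and $\nabla a_\infty\in L^{2,\infty}$, and in case (a) the uniform $W^{1,p}$ bound passes to the limit; in case (b) one checks in addition that $g_\infty$ retains the trace $g_0$ and the boundary charge identity, using the fixed $W^{1,1}\cap H^{1/2}$ datum to control the boundary term.

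The lower-semicontinuity inequality is then immediate: since $u_{g_\infty}=u_\infty$ and $b_{g_\infty}=b_\infty$ are weak $W^{1,2}$ limits, weak lower semicontinuity of the Dirichlet integrals gives $\frac14\int|\nabla u_{g_\infty}|^2\le\liminf\frac14\int|\nabla u_{g_k}|^2$ and the analogous inequality for $b$, which add up to $\mathcal{E}(g_\infty)\le\liminf\mathcal{E}(g_k)$; any energy lost to bubbling of the lifts only helps this inequality. The main obstacle I anticipate is the singular-set control of the second step: making the quantization bound $\frac14\int|\nabla u_{g}|^2\ge 2\pi\sum_{d_i>0}d_i$ rigorous on the disc with boundary, and handling collisions of singularities and the escape of interior singularities to $\partial D^2$ without losing the charge-balance relation or the prescribed trace in case (b). Everything else reduces to standard weak-compactness and lower-semicontinuity arguments once $N_k$ is known to stay bounded.
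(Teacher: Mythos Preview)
Your overall architecture --- extract weak $W^{1,2}$ limits of $u_{g_k}$ and $b_{g_k}$, control the number of singularities, identify the limits with $u_{g_\infty}$, $b_{g_\infty}$, and conclude by lower semicontinuity --- is exactly the paper's, and the lower-semicontinuity and identification steps go through as you describe. The genuine gap is in the singular-set control. Your quantization inequality $\tfrac14\int_{D^2}|\nabla u_g|^2\ge 2\pi\sum_{d_i>0}d_i$ is \emph{false} on the disc: for $g(z)=z/|z|$ one has $u_g=\pi^{-1}(z)$, whose image is a hemisphere, so $\tfrac14\int_{D^2}|\nabla u_g|^2=\pi$ while $2\pi\sum_{d_i>0}d_i=2\pi$. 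The correct estimate (this is the paper's Theorem~\ref{Proposition: A priori estimate on number of singularities}, proved via the coarea formula for $a_g$) reads
\[
\pi\sum_{d_i>0}d_i\ \le\ \tfrac12\,\|\partial_\theta g_0\|_{L^1(\partial D^2)}+\int_{D^2}f(a_g)|\nabla a_g|^2,
\]
and the boundary term cannot be dropped. Your fallback for part~(a), that ``each singularity contributes at least a fixed positive amount to $\int_{D^2}|\nabla g_k|^p$'', is also wrong: a degree $\pm1$ singularity on a ball of radius $r$ contributes $\sim r^{2-p}$, so dipole pairs at distance $\varepsilon\to 0$ (or singularities drifting to $\partial D^2$) carry arbitrarily little $W^{1,p}$ energy, and one can pile up unboundedly many of them under a fixed $W^{1,p}$ bound.

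What actually closes the argument is this. In part~(b) the trace $g_0\in W^{1,1}\cap H^{1/2}$ is fixed, so the boundary term above is a constant and the correct estimate bounds $\sum_i|d_i^{(k)}|$ (hence $\|\nabla a_{g_k}\|_{L^{2,\infty}(D^2)}$) globally; from there your proof works verbatim. In part~(a) there is no control on $g_k|_{\partial D^2}$, and you must manufacture it: the $W^{1,p}$ bound on $g_k$ together with $\nabla b_{g_k}\in L^2$ gives $|\nabla a_{g_k}|$ bounded in $L^1$ on any annulus $D^2\setminus D^2_R$, and a Fubini/Courant--Lebesgue selection then produces, for each $n$, a radius $r_n\in(1-\tfrac1n,1)$ and a subsequence with $\sup_k\int_{\partial D^2_{r_n}}|\nabla a_{g_k}|<\infty$. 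Applying the correct estimate on $D^2_{r_n}$ (with the circle trace $g_k|_{\partial D^2_{r_n}}$ playing the role of $g_0$) bounds the number of singularities of $g_k$ inside $D^2_{r_n}$ uniformly, and hence $\|\nabla a_{g_k}\|_{L^{2,\infty}(D^2_{r_n})}$; a diagonal extraction over $n$ then yields the local $L^{2,\infty}$ compactness and the isolated-singularity structure of the limit. Everything else in your outline is fine once this replacement is made.
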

One important step in the proof of Theorem \ref{th-I.1} consists in the following a-priori estimate on the number of the topological singularities of a function $g\in W^{1,p}(D^2, S^1)$.
\begin{Th}\label{Proposition: A priori estimate on number of singularities}
Let $g_0\in W^{1,1}(\partial D^2, S^1)$. Assume that $g\in W^{1,p}(D^2, S^1)$ (for some $p>1$) is an $S^1$-valued map with finitely many topological singularities in $\overline{D^2}$.
Assume that either no topological singularities lie on $\partial D^2$ or $g_0\in W^{1,1}\cap H^\frac{1}{2}(\partial D^2)$.
Then
\begin{align}
\label{I.18}
\sum_{i=1}^Q \lvert d_i\rvert\leq C\left[\mathcal{E}(g)+\lVert\partial_\theta g_0\rVert_{L^1(\partial D^2)}\right].
\end{align}
for some universal constant $C>0$.
\hfill $\Box$
\end{Th}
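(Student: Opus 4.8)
The plan is to use the $S^2$-lift $u_g$ as a device converting the counting of topological singularities of $g$ into a degree/covering count for a map into $S^2$, and then to bound that count by the renormalized energy through the elementary energy--area inequality. First I would record the structure of the lift. Combining the Hodge decomposition (\ref{Equation: Definition of ag}) with the hypothesis (\ref{Equation: Assumption on gdg}) gives $\Delta a_g = 2\pi\sum_i d_i\,\delta_{p_i}$, so $a_g = \sum_i d_i\log|z-p_i| + (\text{harmonic})$. Since $g$ is $S^1$-valued, $|e^{a_g}g| = e^{a_g}$ is finite and nonzero away from the $p_i$; hence $h := e^{a_g}g$ behaves near each interior $p_i$ like $(z-p_i)^{d_i}\times(\text{continuous, nonvanishing})$, i.e.\ it has a zero (if $d_i>0$) or a pole (if $d_i<0$) of order $|d_i|$ and winds $d_i$ times. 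Consequently $u_g=\pi^{-1}(h)$ extends continuously to each $p_i$, sending it to the south pole when $d_i>0$ and to the north pole when $d_i<0$, with local Brouwer degree $d_i$ there.

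The main analytic input is the pointwise inequality $\tfrac14|\nabla u_g|^2 \ge \tfrac12|\mathrm{Jac}\,u_g|$ (with equality iff $u_g$ is weakly $(\text{anti})$conformal), exactly the inequality already used in the excerpt to evaluate $\mathcal E$ on the meromorphic model. Integrating and applying the area formula,
\[
\mathcal E(g)\ \ge\ \frac14\int_{D^2}|\nabla u_g|^2\ \ge\ \frac12\int_{D^2}|\mathrm{Jac}\,u_g|\,dx^2\ =\ \frac12\int_{S^2}N(y)\,d\mathrm{vol}_{S^2}(y),
\]
where $N(y)=\#\,u_g^{-1}(y)$. It then remains to bound $\int_{S^2}N\,d\mathrm{vol}_{S^2}$ from below by $\sum_i|d_i|$.

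The decisive step is a degree-theoretic lower bound on $N$, carried out separately over the two hemispheres in order to defeat the sign cancellation: the signed degree only sees $\sum_i d_i$, whereas we need $\sum_i|d_i|$. The interior south-pole preimages of $u_g$ are precisely the positive singularities, contributing $\sum_{d_i>0}d_i$ with multiplicity and all of local degree $+1$; by homotopy invariance of the degree these preimages persist as $y$ ranges over the component of $S^2\setminus u_g(\partial D^2)$ containing the south pole, so $N(y)\ge\sum_{d_i>0}d_i$ there, and symmetrically $N(y)\ge\sum_{d_i<0}|d_i|$ near the north pole. A preimage can fail to persist only by escaping across $\partial D^2$, and the number of such escapes as $y$ sweeps a hemisphere is controlled by the total variation of $u_g|_{\partial D^2}=\pi^{-1}(e^{a_g}g_0)$, i.e.\ by $\int_{\partial D^2}|\partial_\theta u_g|\lesssim \|\partial_\theta g_0\|_{L^1(\partial D^2)}+\|\partial_\theta a_g\|_{L^1(\partial D^2)}$, using that $\pi^{-1}$ is Lipschitz. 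Integrating the two hemisphere bounds (each hemisphere has area $2\pi$) and rearranging yields $\pi\sum_i|d_i|\lesssim \mathcal E(g)+(\text{boundary variation})$, which is the claim.

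I expect the genuine difficulty to be this last step: making the escape-across-the-boundary count rigorous and, above all, bounding $\|\partial_\theta a_g\|_{L^1(\partial D^2)}$ by $\|\partial_\theta g_0\|_{L^1}$. This is exactly where the dichotomy in the hypotheses enters --- when no singularity lies on $\partial D^2$ the trace of $a_g$ is controlled directly, whereas in the presence of boundary singularities one needs the extra regularity $g_0\in H^{\frac12}(\partial D^2)$ (together with the evenness of the boundary $d_i$) to keep the trace of the lift, and hence the escape count, under control. A cleaner alternative I would also pursue is to close up the disc --- reflecting or capping $g$ across $\partial D^2$ into a map on $\mathbb{C}P^1$ whose added energy is $\lesssim\|\partial_\theta g_0\|_{L^1}$ --- and then to invoke directly the balanced closed-surface identity $\sum_i|d_i|=2\deg(u_g)\le \tfrac1\pi\mathcal E$, which holds because on a closed surface the total topological degree of an $S^1$-valued map vanishes and forces $\sum_{d_i>0}d_i=\sum_{d_i<0}|d_i|=\deg(u_g)$.
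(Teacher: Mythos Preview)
Your approach is conceptually natural but hits a genuine obstruction precisely at the step you flag as ``the genuine difficulty''. The quantity $\|\partial_\theta a_g\|_{L^1(\partial D^2)}$ is \emph{not} controlled by $\|\partial_\theta g_0\|_{L^1(\partial D^2)}$: the function $a_g$ solves a Neumann problem with $\partial_\nu a_g = -ig_0^{-1}\partial_\theta g_0$ on $\partial D^2$, and passing from the normal derivative to the tangential derivative of a harmonic function on the circle is exactly the Hilbert transform, which is unbounded on $L^1$. Hence the trace $u_g|_{\partial D^2}$ need not lie in $W^{1,1}(\partial D^2)$ even when $g_0$ does, and your escape-across-the-boundary count cannot be closed with only $\|\partial_\theta g_0\|_{L^1}$ on the right-hand side. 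The reflection/capping alternative runs into the same wall: producing an extension whose added energy is controlled by $\|\partial_\theta g_0\|_{L^1}$ implicitly requires $W^{1,1}$ control of $u_g|_{\partial D^2}$ (compare the extension Lemma~\ref{Lemma: Existence of a competitor}, which goes in the opposite direction).

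The paper bypasses the tangential trace of $a_g$ entirely by replacing your area/degree viewpoint on $u_g$ with a \emph{coarea} argument on the scalar $a_g$. Using $\mathcal E(g)\ge \int_{D^2} f(a_g)|\nabla a_g|^2$ (Lemmas~\ref{Lemma: Explicit computation for the energy}--\ref{Lemma: Improving the minimizing sequence}) and the coarea formula $\int_{D^2} f(a_g)|\nabla a_g|^2 = \int_{\mathbb R} f(t)\big(\int_{a_g^{-1}(t)}|\nabla a_g|\big)\,dt$, one evaluates the inner integral by the divergence theorem on the sublevel set $\{a_g<t\}$: since $\Delta a_g = 2\pi\sum_i d_i\delta_{p_i}$ and only the \emph{normal} derivative $\partial_\nu a_g = -ig_0^{-1}\partial_\theta g_0$ appears in the boundary term, one gets $\int_{a_g^{-1}(t)}|\nabla a_g| \ge 2\pi\sum_{d_i>0}d_i - \|\partial_\theta g_0\|_{L^1}$ for a.e.\ $t$. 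Integrating against $f$ (with $\int_{\mathbb R} f=\tfrac12$) yields the estimate with no Hilbert transform in sight. The case with singularities on $\partial D^2$ is then obtained by pushing them slightly inside and invoking the stability Lemma~\ref{Lemma: Stability wrt points}, which is exactly where the additional $H^{1/2}$ hypothesis on $g_0$ is spent.
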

\begin{Rm}
\label{rm-I.10}
We believe that Theorem \ref{Proposition: A priori estimate on number of singularities} remains true even if we do not assume a priori that $g$ has finitely many topological singularities in $\overline{D^2}$: the finiteness of the number of singularities should be a consequence of the finiteness of ${\mathcal E}(g)$ combined with the $W^{1,1}$-bound at the boundary.\\
The $W^{1,1}$-bound at the boundary seems necessary 
and it could be that another bound with the same scaling property such as $g\in H^{1/2}(\p D^2, S^1)$ does not imply the finiteness of the number of topological singularities.\footnote{While
the slightly stronger assumption $\Delta^{1/4}g\in  L^{2,1}(\p D^2,S^1)$ (where $L^{2,1}(\p D^2)$ is the Lorentz space pre-dual of the weak $L^2$ space $L^{2,\infty}(\p D^2)$) should imply that $g$ has finitely many topological singularities in $\overline{D^2}$.}\hfill $\Box$
\end{Rm}

The next Theorem is the third main result of the present paper, it says that the
critical points of ${\mathcal E}$ are $S^1-$harmonic maps and vice versa.

\begin{Th}\label{th-I.2}
Let $g\in W^{1,p}(D^2, S^1)$ be as in Definition \ref{df-I.1} .
Assume that
\begin{align}
\nonumber
\mathcal{E}(g)<\infty.
\end{align}
Then $g$ solves the weak $S^1$ harmonic map equation (\ref{I.6}) if and only if $g$ is a critical point of the "renormalized Dirichlet Energy" for smooth variations in the target, that is
\begin{align}
\nonumber
\forall \psi\in C^\infty_c(D^2, \mathbb{R})\quad \frac{d}{dt}\bigg\vert_{t=0}\mathcal{E}(g e^{it\psi})=0.
\end{align}
Moreover, if $g$ has isolated topological singularities $g$ solves the weak $S^1$-harmonic map equation (\ref{I.6}) if and only if its lift $u_g$ is a conformal harmonic map into $S^2$.
\end{Th}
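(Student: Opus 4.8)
The plan is to reduce the whole statement to the vanishing of the Hodge component $b_g$, which by construction carries exactly the divergence appearing in (\ref{I.6}).

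\medskip

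\noindent\textbf{Step 1: $(\ref{I.6})\iff b_g\equiv 0$.} From (\ref{Equation: Definition of ag}) write $-i\,g^{-1}\nabla g=\nabla^\perp a_g+\nabla b_g$. Since $\operatorname{div}(\nabla^\perp a_g)=0$ identically, we get $\operatorname{div}(-i\,g^{-1}\nabla g)=\Delta b_g$ in ${\mathcal D}'(D^2)$. As $g\wedge\nabla g=-i\,g^{-1}\nabla g$, equation (\ref{I.6}) reads $\Delta b_g=0$; together with $b_g=0$ on $\p D^2$ (cf. (\ref{I.13})) and $\nabla b_g\in L^2$ (which holds since ${\mathcal E}(g)<\infty$, so $b_g\in W^{1,2}_0(D^2)$), this is equivalent to $b_g\equiv 0$. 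Thus I will everywhere replace (\ref{I.6}) by the condition $b_g\equiv 0$.

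\medskip

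\noindent\textbf{Step 2: first variation of ${\mathcal E}$.} Set $g_t:=g\,e^{it\psi}$ with $\psi\in C^\infty_c(D^2,\R)$, so that $-i\,g_t^{-1}\nabla g_t=-i\,g^{-1}\nabla g+t\,\nabla\psi$. Since $\psi$ has compact support, $b_g+t\psi$ has zero trace and solves the defining problem (\ref{I.13}) for $g_t$; by uniqueness $b_{g_t}=b_g+t\psi$ and hence $a_{g_t}=a_g$. Therefore $u_{g_t}=\pi^{-1}\!\big(e^{a_g}g\,e^{it\psi}\big)$, and writing $w_0:=e^{a_g}g=\rho\,e^{i\beta}$ with $\rho=e^{a_g}$ and $\nabla\beta=-i\,g^{-1}\nabla g=\nabla^\perp a_g+\nabla b_g$, the conformal factor of the stereographic projection gives
\[
\tfrac14\,|\nabla u_{g_t}|^2=\frac{|\nabla\rho|^2+\rho^2\,|\nabla(\beta+t\psi)|^2}{(1+\rho^2)^2}\,.
\]
Differentiating under the integral (legitimate as ${\mathcal E}(g)<\infty$ and $\psi$ is smooth with compact support),
\[
\frac{d}{dt}\Big|_{t=0}{\mathcal E}(g_t)=2\int_{D^2}\frac{\rho^2}{(1+\rho^2)^2}\,\nabla\beta\cdot\nabla\psi\,dx^2+\frac12\int_{D^2}\nabla b_g\cdot\nabla\psi\,dx^2\,.
\]

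\medskip

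\noindent\textbf{Step 3: the $\nabla^\perp a_g$ part drops out.} Let $f(s):=e^{2s}/(1+e^{2s})^2$, which is bounded by $1/4$, and let $F$ be a primitive of $f$; then $F$ is Lipschitz, so $F(a_g)\in W^{1,p}$ and $f(a_g)\,\nabla^\perp a_g=\nabla^\perp F(a_g)$. The integral $\int_{D^2}\nabla^\perp F(a_g)\cdot\nabla\psi\,dx^2$ vanishes (it is the integral of an exact Jacobian against a compactly supported test function), so only the $\nabla b_g$ contribution of $\nabla\beta$ survives. Hence
\[
\frac{d}{dt}\Big|_{t=0}{\mathcal E}(g_t)=\int_{D^2}c\,\nabla b_g\cdot\nabla\psi\,dx^2,\qquad c:=\frac{2e^{2a_g}}{(1+e^{2a_g})^2}+\frac12\in\Big[\tfrac12,1\Big].
\]
If $b_g\equiv0$ this vanishes, so $g$ is a critical point. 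Conversely, if $g$ is a critical point then $\operatorname{div}(c\,\nabla b_g)=0$ weakly; $c$ is uniformly elliptic and $b_g\in W^{1,2}_0$, so testing against $b_g$ gives $\int c\,|\nabla b_g|^2=0$, i.e. $b_g\equiv0$. With Step 1 this proves the first equivalence.

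\medskip

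\noindent\textbf{Step 4: the ``moreover''.} The map $u_g=\pi^{-1}(w_0)$ is (weakly) conformal iff $w_0$ is holomorphic or antiholomorphic. From $g^{-1}\p_{\ov z}g=\p_{\ov z}(-a_g+i\,b_g)$ (equivalently (\ref{I.9})) one computes $\p_{\ov z}w_0=i\,e^{a_g}g\,\p_{\ov z}b_g$ and $\p_z w_0=e^{a_g}g\,(2\,\p_z a_g+i\,\p_z b_g)$. If $b_g\equiv0$ then $\p_{\ov z}w_0=0$; near each $p_i$ the identity $\operatorname{div}(i\,g^{-1}\nabla^\perp g)=\Delta a_g=2\pi\sum_i d_i\delta_{p_i}$ (cf. (\ref{Equation: Assumption on gdg})) forces $e^{a_g}g\sim (z-p_i)^{d_i}$, so $w_0$ is meromorphic and $u_g$ extends to a holomorphic, hence conformal harmonic, finite-energy map into $S^2$. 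For the converse, a conformal harmonic $u_g$ has finite energy, so its isolated singularities are removable and $u_g$ is smooth, hence $\pm$holomorphic. In the holomorphic case $\p_{\ov z}w_0=0$ yields $\p_{\ov z}b_g=0$, and $b_g$ real with zero trace gives $b_g\equiv0$. In the antiholomorphic case $\p_z w_0=0$ gives $\nabla a_g=\tfrac12\nabla^\perp b_g$, hence $\Delta a_g=0$; this contradicts $\Delta a_g=2\pi\sum_i d_i\delta_{p_i}$ whenever there is a nontrivial singularity, while in the remaining regular case $b_g$ is a harmonic conjugate of $a_g$, hence harmonic with zero trace, so again $b_g\equiv0$. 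In every case $b_g\equiv0$, i.e. (\ref{I.6}) holds.

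\medskip

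\noindent The step I expect to be the main obstacle is the rigorous justification of Steps 2--4 near the singular points, where $a_g\to\pm\infty$ and $\nabla u_g$ concentrates: the boundedness and decay of the weight $e^{2a_g}/(1+e^{2a_g})^2$ is precisely what makes the null-Lagrangian cancellation and the uniform ellipticity of $c$ valid across the $p_i$, and the removable-singularity input together with the $\pm$holomorphic dichotomy is the delicate point in the ``moreover''.
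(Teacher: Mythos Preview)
Your proof is correct and follows essentially the same route as the paper: both reduce everything to the single condition $b_g\equiv 0$, then (i) identify $b_g\equiv 0$ with (\ref{I.6}) via the Hodge decomposition, (ii) compute the first variation using $a_{g_t}=a_g$, $b_{g_t}=b_g+t\psi$ and the explicit weight $f(a_g)=e^{2a_g}/(1+e^{2a_g})^2$, (iii) kill the $\nabla^\perp a_g$ contribution via the null-Lagrangian identity $f(a_g)\nabla^\perp a_g=\nabla^\perp F(a_g)$, and (iv) conclude by uniform ellipticity of $f(a_g)+\tfrac14$ (your $c/2$). Your Step~4 is in fact slightly more careful than the paper's Lemma~\ref{Lemma: Equivalent conditions g S^1 harmonic}: the paper passes directly from ``$u_g$ conformal'' to ``$-ig^{-1}\nabla g=\nabla^\perp a_g$'' without explicitly excluding the antiholomorphic branch, whereas you treat that branch separately and rule it out via $\Delta a_g=0$ contradicting the $\delta$-masses (or, in the regular case, via $b_g$ harmonic with zero trace).
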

The behaviour of $\mathcal{E}$ under variations of different type is adressed in Remark \ref{Remark: general variations}.\\
Combining the results above and the fact that any map in $W^{1,1}(\partial D^2,S^1)$ admits a finite  ``renormalized Dirichlet Energy'' extension (see Lemma \ref{Lemma: Existence of a competitor})
we obtain the following result.
\begin{Co}
\label{co-I.1}
Let $g_0\in W^{1,1}(\p D^2,S^1)$, then there exists an $S^1$-harmonic map $\displaystyle g_{min}$ minimizing ${\mathcal E}$ among the functions $g\in W^{1,(2,\infty)}(D^2,S^1)$ with finitely many singularities in $\displaystyle \overline{D^2}$ and satisfying $g\vert_{\partial D^2}=g_0$.
\end{Co}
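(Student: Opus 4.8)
The plan is to run the direct method of the calculus of variations, using Theorem~\ref{th-I.1} to supply compactness and lower semicontinuity and Theorem~\ref{th-I.2} to upgrade the resulting minimizer to an $S^1$-harmonic map. First I would set
\[
m:=\inf\Big\{\mathcal{E}(g)\ :\ g\in W^{1,(2,\infty)}(D^2,S^1)\text{ has finitely many singularities in }\overline{D^2},\ g\vert_{\partial D^2}=g_0\Big\}.
\]
By Lemma~\ref{Lemma: Existence of a competitor} the map $g_0$ admits an extension into $D^2$ with finitely many topological singularities and finite renormalized energy, so the admissible class is non-empty; since $\mathcal{E}\ge 0$ by definition, $m\in[0,\infty)$.

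Next I would pick a minimizing sequence $(g_k)$ with $\mathcal{E}(g_k)\to m$. Each $g_k$ lies in $W^{1,p}$ for some $p>1$ (possibly depending on $k$), has finitely many topological singularities in $\overline{D^2}$, and has trace $g_0$, so $(g_k)$ meets the hypotheses of Theorem~\ref{th-I.1}~b). That theorem yields a subsequence $g_{k'}\rightharpoonup g_\infty$ weakly in $L^{2,\infty}_{loc}(D^2)$, with $g_\infty$ again an $S^1$-valued map with finitely many topological singularities in $\overline{D^2}$ and trace $g_0$, and with $\mathcal{E}(g_\infty)\le\liminf_{k'\to\infty}\mathcal{E}(g_{k'})=m$. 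As $g_\infty$ is admissible we also have $\mathcal{E}(g_\infty)\ge m$, whence equality holds and $g_{min}:=g_\infty$ is a minimizer.

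It then remains to check that the minimizer is $S^1$-harmonic. For $\psi\in C^\infty_c(D^2,\mathbb{R})$ the competitor $g_{min}\,e^{it\psi}$ has the same trace $g_0$, because $\psi$ vanishes near $\partial D^2$, and the same topological singular set, because $\operatorname{div}(i\,\nabla^\perp(t\psi))=0$ shows that multiplying by the smooth unit-modulus factor $e^{it\psi}$ leaves $\operatorname{div}(i\,g^{-1}\nabla^\perp g)$ unchanged; hence it stays admissible. Minimality of $g_{min}$ forces $t=0$ to be a minimum of $t\mapsto\mathcal{E}(g_{min}e^{it\psi})$, so $\frac{d}{dt}\big\vert_{t=0}\mathcal{E}(g_{min}e^{it\psi})=0$ for every such $\psi$, which by Theorem~\ref{th-I.2} is exactly the assertion that $g_{min}$ solves the weak $S^1$-harmonic map equation~(\ref{I.6}).

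The main obstacle I anticipate is a hypothesis mismatch at the boundary: Theorem~\ref{th-I.1}~b) and the a~priori count in Theorem~\ref{Proposition: A priori estimate on number of singularities} are stated under $g_0\in W^{1,1}\cap H^{1/2}(\partial D^2,S^1)$, or under the assumption that no singularity lies on $\partial D^2$, whereas the corollary assumes only $g_0\in W^{1,1}$. To close this gap I would argue that the minimization can be performed over competitors whose singularities lie in the interior $D^2$ --- for instance by showing, via the elementary factors of the form (\ref{I.5-b}) entering the competitor of Lemma~\ref{Lemma: Existence of a competitor}, that any singularity forced onto $\partial D^2$ can be displaced slightly inward at controlled energy cost --- so that the ``no singularities on $\partial D^2$'' branch of Theorem~\ref{Proposition: A priori estimate on number of singularities} applies and the compactness of Theorem~\ref{th-I.1}~b) goes through with the $W^{1,1}$ bound alone. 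Verifying that this displacement neither raises the infimum nor lets boundary singularities reappear in the weak limit is the delicate point; everything else is a routine assembly of the quoted results.
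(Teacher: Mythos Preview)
Your outline is exactly the paper's intended argument: the corollary is stated right after the line ``Combining the results above and the fact that any map in $W^{1,1}(\partial D^2,S^1)$ admits a finite renormalized Dirichlet Energy extension (see Lemma~\ref{Lemma: Existence of a competitor})'', and no further proof is given. So the direct method using Lemma~\ref{Lemma: Existence of a competitor} for existence of a competitor, Theorem~\ref{th-I.1}~b) for compactness and lower semicontinuity, and Theorem~\ref{th-I.2} for harmonicity is precisely what is meant.

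You have also put your finger on a genuine issue: Theorem~\ref{th-I.1}~b) is stated under $g_0\in W^{1,1}\cap H^{1/2}(\partial D^2,S^1)$, while the corollary assumes only $W^{1,1}$; and $W^{1,1}(\partial D^2)$ does not embed into $H^{1/2}(\partial D^2)$. However, your proposed workaround --- displace boundary singularities slightly inward at controlled energy cost --- is exactly the content of Lemma~\ref{Lemma: Stability wrt points}, and that lemma is proved under the hypothesis $g_0\in H^{1/2}(\partial D^2,S^1)$ (Claim~1 there uses it to put $-ig_0^{-1}\partial_\theta g_0$ into $H^{-1/2}$). So your fix is circular: the displacement argument you invoke needs the very hypothesis you are trying to dispense with. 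The paper flags this as an open question right after Lemma~\ref{Lemma: Stability wrt points}. One can salvage the direct method under $W^{1,1}$ alone by restricting the minimizing sequence to have interior singularities only (Lemma~\ref{Lemma: A priori estimate on the number of singularities} and Lemma~\ref{Lemma: Estimate from estimate on number of topological singularities} then give the uniform bounds without $H^{1/2}$), but you then face the two difficulties you name at the end: showing this does not raise the infimum, and controlling possible accumulation on $\partial D^2$ in the limit. Neither is resolved by the tools presently in the paper when $g_0\notin H^{1/2}$, so the honest reading is that the corollary implicitly carries the stronger hypothesis $g_0\in W^{1,1}\cap H^{1/2}(\partial D^2,S^1)$.
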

\begin{Rm}
\label{rm-I.2} It is still an open question to know whether or not in Corollary~\ref{co-I.1} the degrees are all equal to $+1$
and whether $Q$ is equal to the topological degree of $g_0$. One could also wonder if one should expect singularities to be located at the boundary or not. While these questions are settled in \cite{BBH} thanks to the
careful analysis of the diverging part of the Ginzburg-Landau energy (i.e. the coefficient in front of $\log \ep^{-1}$), in the present situation there is no such  leading diverging term imposing restrictions on the configuration $(d_i,p_i)$
and these questions are left open at this stage. \hfill $\Box$
\end{Rm}
We conclude this introduction with the following open problem.
\begin{oq}
A more natural trace space than $W^{1,1}(\p D^2,S^1)$ to consider for $S^1$-harmonic map is the trace space $H^{1/2}(\p D^2,S^1)$. In particular it would be interesting to investigate whether any trace in $H^{1/2}(\p D^2,S^1)$ admits 
a finite ``renormalized Dirichlet Energy'' extension - we believe this is the case - and if there exist finite energy extensions (minimal or not) with infinitely many singular points accumulating at the boundary.
This last fact cannot be excluded
a priori.
\end{oq}

The paper is organized as follows.\\
In chapter II we recall the definition of some of the functions spaces we will be using throughout the paper and we fix some notations. We then present some preliminary results about the energy and the functions introduced in Definition \ref{df-I.1}.\\
In section III we give a proof of Theorem \ref{th-I.1} and Theorem \ref{Proposition: A priori estimate on number of singularities}.
In section IV we give a proof of Theorem \ref{th-I.2}.\\
In section V we present two applications of the ideas introduced in this work: the first has to do with the renormalization of the Willmore energy for Lagrangian singular immersions into K\"ahler-Einstein surfaces while the second is dealing with frame energies for surfaces immersions into Euclidian spaces.
\subsubsection*{Aknowledgements}
This work has been supported by the Swiss National Science Foundation (SNF 200020\_192062).

\section{Notation and preliminary results}
\subsection{Notation}
Let $\Omega\subset\mathbb{R}^n$ be a domain.
Recall that a function $f:\Omega\to \mathbb{R}$ is said to belong to the \textbf{weak $L^2$ space} $L^{2,\infty}(\Omega, \mathbb{R})$ if $f$ is measurable and 
$$[ f]_{L^{2,\infty}}:=\sup\{\gamma d_f(\gamma)^\frac{1}{2},\, \gamma>0\}$$
is finite, where
$$d_f(\alpha)=\mathcal{L}^n\left(\left\{x\in \Omega: \lvert f(x)\rvert>\alpha\right\}\right).$$
$[ \cdot]_{L^{2,\infty}}$ is a quasi-norm on $L^{2,\infty}(\Omega, \mathbb{R})$ and $L^{2,\infty}(\Omega, \mathbb{R})$ can be made into a Banach space by introducing a norm $\lVert\cdot\rVert_{L^{2,\infty}}$ equivalent to $[\cdot]_{L^{2,\infty}}$ (as quasi-norm) (see Exercise 1.1.12 in \cite{Gra1}).\\
We also recall the definition of the following space:
$$W^{1, (2,\infty)}(\Omega, \mathbb{R})=\left\{ u\in \mathcal{D}'(\Omega), \, \nabla u\in L^{2,\infty}\right\}.$$
$W^{1, (2,\infty)}(\Omega, \mathbb{R})$ is a Banach space with norm
$$\lVert f\rVert_{W^{1,(2,\infty)}}=\lVert f\rVert_{L^{2,\infty}}+\lVert\nabla f\rVert_{L^{2,\infty}}.$$
Observe that
$$[f]_{W^{1,(2,\infty)}}:=\lVert\nabla f\rVert_{L^{2,\infty}}$$
defines a semi-norm on $W^{1, (2,\infty)}(\Omega, \mathbb{R})$. At times it will be usefull to consider the space $\dot{W}^{1,(2,\infty)}(\Omega)$ obtained as the quotient of $W^{1,(2,\infty)}(\Omega)$ by the constant functions. $\dot{W}^{1,(2,\infty)}(\Omega)$ is again a Banach space and the seminorm
$[\cdot]_{W^{1,(2,\infty)}}$ induces a norm on $\dot{W}^{1,(2,\infty)}(\Omega)$.

In the following we will often consider functions with values in $\mathbb{C}\simeq \mathbb{R}^2$. Sometimes it will be convenient to look at this space as $\mathbb{C}$, while in other occasion as $\mathbb{R}^2$.
To avoid confusion, we will denote the complex multiplication of two elements $\alpha$, $\beta\in \mathbb{C}$ as
$$\alpha\beta,$$
while we will denote their $\mathbb{R}^2$-scalar product as 
$$\alpha\cdot\beta.$$
Moreover, when considering the product of gradients we will use the following notation: if $f,g: \mathbb{R}^2\to \mathbb{R}^n$,
$$<\nabla f, \nabla g>=\sum_{i=1}^2\sum_{j=1}^n \partial_{x_i} f^j\partial_{x_i}g^j.$$
\subsection{Degree of a map between manifolds}
We briefly recall here the notion of degree of a map between smooth manifolds, as we will make large use of it in the present article. For more details see Chapter 7 in \cite{BG}.\\
Let $M$ and $N$ be two oriented, compact, connected smooth $n-$manifolds without boundary.
Let $f:M\to N$ be a smooth map.
For any regular value $y\in N$ of $f$ let
\begin{align*}
\deg(f,y)=\sum_{x\in f^{-1}(y)}\operatorname{sgn}df_x,
\end{align*}
where $\operatorname{sgn}df_x=1$ if $df_x$ is orientation preserving and  $\operatorname{sgn}df_x=-1$ if $df_x$ is orientation reversing.\\
One can show 
that $\deg(f,y)$ does not depend on the choice of $y$, therefore we can define the \textbf{degree of $f$} as
\begin{align*}
\deg(f):=\deg(f,y)
\end{align*}
for any regular value $y\in N$ of the map $f$.\\
The degree of $f$ can also be characterized as follows: $\deg (f)$ is the only integer such that for any smooth $n-$form $\omega$ on $N$
\begin{align*}
\int_Mf^*\omega=\deg (f)\int_N\omega.
\end{align*}
When $M=N=S^1$, the notion of degree of a map from $M$ to $N$ can be extended to continuous maps. In fact given a continuous map $f:S^1\to S^1$ and a continuous parametrization $\phi: [0,1]\to S^1$ of $S^1$ as a closed curve (with $\phi(0)=\phi(1)$)
one can show that there exists a continuous lift $\tilde{f}: [0,1]\to \mathbb{R}$ such that
\begin{align*}
f\circ\phi(x)=e^{i\tilde{f}(x)}\quad\forall x\in S^1.
\end{align*}
Then the \textbf{degree of f} is defined as
\begin{align*}
\deg(f)=\frac{1}{2\pi}\left(\tilde{f}(1)-\tilde{f}(0)\right).
\end{align*}
\subsection{Preliminary results for general $g$}
In this subsection and in the next we collect some preliminary results for functions $g$ as in Definition \ref{df-I.1}. Here we do not make further assumptions on $g$ (in particular we do not assume that $g$ has isolated or finitely many topological singularities). In the next subsection we will focus on functions with finitely many topological singularities.
\begin{lem}\label{Lemma: Explicit computation for the energy}
Let $g\in W^{1,p}( D^2, S^1)$ (for some $p>1$) be as in Definition \ref{df-I.1} and assume that
$$\mathcal{E}(g)<\infty.$$
Let
$$f: \mathbb{R}\to\mathbb{R}, \quad x\mapsto\frac{e^{2x}}{(1+e^{2x})^2}.$$
Then
\begin{align}\label{computations explicit form energy, 1}
\mathcal{E}(g)=\int_{D^2}f(a_g)\left(\lvert \nabla g\rvert^2+\lvert \nabla a_g\rvert^2\right)+\frac{1}{4}\int_{D^2}\lvert \nabla b_g\rvert^2.
\end{align}
Moreover
$$\int_{D^2}f(a_g)\lvert \nabla a_g\rvert^2=\int_{D^2}\lvert \nabla \arctan e^{a_g}\rvert^2$$
and if $b_g=0$ in $D^2$
$$\mathcal{E}(g)=2\int_{D^2}f(a_g)\lvert \nabla a_g\rvert^2.$$
\end{lem}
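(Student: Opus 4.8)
The goal is to compute $\mathcal{E}(g)$ starting from its definition, $\mathcal{E}(g)=\tfrac14\int_{D^2}\bigl(|\nabla u_g|^2+|\nabla b_g|^2\bigr)$, and to show the energy density of the lift $u_g$ reduces to the stated expression involving $f(a_g)$. The plan is to exploit the concrete formula $u_g=\pi^{-1}\bigl(e^{a_g}g\bigr)$ together with the conformality of stereographic projection. First I would recall that the stereographic projection $\pi:S^2\to\mathbb{C}$ is a conformal diffeomorphism whose inverse has a well-known conformal factor: writing $w=e^{a_g}g\in\mathbb{C}$, the pullback of the round metric on $S^2$ under $\pi^{-1}$ is $\frac{4}{(1+|w|^2)^2}\,|dw|^2$. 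Consequently
\begin{align*}
|\nabla u_g|^2=\frac{4}{(1+|w|^2)^2}\,|\nabla w|^2,
\end{align*}
where $w=e^{a_g}g$ and $|\nabla w|^2$ is the Euclidean Dirichlet density of the $\mathbb{C}$-valued map $w$.

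The heart of the computation is then to expand $|\nabla w|^2$ for $w=e^{a_g}g$. Since $g\in S^1$, I would write $\nabla w=e^{a_g}\bigl(\nabla a_g\,g+\nabla g\bigr)$ and take the squared modulus. The key algebraic fact is that $|g|=1$ forces $g^{-1}\nabla g$ to be purely imaginary, so $\mathrm{Re}\,(\overline{g}\,\nabla g)=0$, i.e.\ the radial part $\nabla a_g\,g$ and the tangential part $\nabla g$ are orthogonal in $\mathbb{C}\simeq\mathbb{R}^2$ after factoring $g$. This yields the clean splitting
\begin{align*}
|\nabla w|^2=e^{2a_g}\bigl(|\nabla a_g|^2+|\nabla g|^2\bigr),
\end{align*}
with no cross term. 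Noting that $|w|^2=e^{2a_g}$, so that $\frac{1}{(1+|w|^2)^2}=\frac{1}{(1+e^{2a_g})^2}$, I would combine this with the conformal factor to obtain $\tfrac14|\nabla u_g|^2=\frac{e^{2a_g}}{(1+e^{2a_g})^2}\bigl(|\nabla a_g|^2+|\nabla g|^2\bigr)=f(a_g)\bigl(|\nabla g|^2+|\nabla a_g|^2\bigr)$. Integrating and adding the $\tfrac14\int|\nabla b_g|^2$ term gives (\ref{computations explicit form energy, 1}).

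For the second identity I would simply recognize that $f(a_g)|\nabla a_g|^2$ is a pointwise chain-rule expression: with $h(x)=\arctan e^{x}$ one has $h'(x)=\frac{e^{x}}{1+e^{2x}}$, hence $|\nabla \arctan e^{a_g}|^2=\bigl(h'(a_g)\bigr)^2|\nabla a_g|^2=\frac{e^{2a_g}}{(1+e^{2a_g})^2}|\nabla a_g|^2=f(a_g)|\nabla a_g|^2$, which integrates to the claimed equality. The final assertion is immediate: when $b_g=0$, the relation (\ref{Equation: Definition of ag}) reduces to $\nabla^\perp a_g=-ig^{-1}\nabla g$, which gives $|\nabla a_g|=|\nabla g|$ pointwise (rotation by $\nabla^\perp$ and multiplication by the unit complex number $-ig^{-1}$ are isometries of $\mathbb{R}^2$); substituting $|\nabla g|^2=|\nabla a_g|^2$ into (\ref{computations explicit form energy, 1}) and dropping the vanishing $b_g$ term collapses the expression to $2\int_{D^2}f(a_g)|\nabla a_g|^2$.

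The main obstacle I anticipate is not the geometry but the \emph{regularity bookkeeping}: $a_g$ and $g$ are only $W^{1,p}$ with $p$ possibly close to $1$, and the singularities of $g$ mean $a_g$ can blow up like $\log|z-p_i|$, so $e^{a_g}$ and $|\nabla w|^2$ need not be integrable on their own even though the weighted combination $f(a_g)(|\nabla g|^2+|\nabla a_g|^2)$ is. I would therefore justify the chain rule and the pointwise splitting away from the singular set and argue that the weight $f(a_g)=\frac{e^{2a_g}}{(1+e^{2a_g})^2}$ stays bounded (it is at most $1/4$ and decays exponentially as $a_g\to\pm\infty$), so the finiteness $\mathcal{E}(g)<\infty$ controls the integrands and lets one pass to the limit across the isolated points by a cutoff/capacity argument, the singular set having zero capacity for $W^{1,p}$ functions.
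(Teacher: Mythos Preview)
Your proof is correct and follows essentially the same route as the paper: compute $|\nabla u_g|^2$ via the conformal factor of the inverse stereographic projection, expand $\nabla(e^{a_g}g)$, use that $g\cdot\nabla g=0$ to kill the cross term, and apply the chain rule with $H(x)=\arctan e^x$. One remark: the regularity concerns in your final paragraph are overstated and not needed here---the lemma does \emph{not} assume isolated singularities, and since $H'(x)=\sqrt{f(x)}\le 1/2$ the map $H$ is globally Lipschitz, so the identity $\nabla(H\circ a_g)=\sqrt{f(a_g)}\,\nabla a_g$ is just the standard chain rule for a Lipschitz function composed with a $W^{1,p}$ function; no cutoff or capacity argument is required.
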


\begin{proof}
We compute
\begin{align}
\nonumber
\mathcal{E}(g)=\frac{1}{4}\int_{D^2}\left\lvert D\pi^{-1}(e^{a_g}g)\, D(e^{a_g} g)\right\rvert^2+\frac{1}{4}\int_{D^2}\lvert \nabla b_g\rvert^2.
\end{align}
Now
$$D\pi^{-1}(e^{a_g}g)\, D(e^{a_g} g)=\frac{2}{1+\lvert e^{a_g} g\rvert^2}D(e^{a_g} g)=2\frac{e^{a_g}Dg+e^{a_g}g Da_g}{1+e^{2a_g}}.$$
Therefore
\begin{align}
\label{Equation: Computations explicit form energy, in the proof}
\mathcal{E}(g)=\int_{D^2}f(a_g)\left(\lvert \nabla g\rvert^2+\lvert \nabla a_g\rvert^2\right)+\frac{1}{4}\int_{D^2}\lvert \nabla b_g\rvert^2
\end{align}
(here we used the fact that since $g$ takes values in $S^1$, $g \cdot Dg=0$).\\
Moreover let
\begin{align*}
H:\mathbb{R}\to\mathbb{R},\quad x\mapsto \arctan e^x,
\end{align*}
then $H'=f^\frac{1}{2}$ and $H\circ a_g\in W^{1,1}(D^2)$ with
\begin{align*}
\nabla (H\circ a_{g})=\left(f(a_g)\right)^\frac{1}{2}\nabla a_g.
\end{align*}
Therefore
\begin{align*}
\int_{D^2}f(a_g)\lvert \nabla a_g\rvert^2=\int_{D^2}\lvert \nabla (H\circ a_g)\rvert^2=\int_{D^2}\left\lvert \nabla \arctan e^{a_g}\right\rvert^2.
\end{align*}
Finally, if $b_g=0$ in $D^2$ then
$$-ig^{-1}\nabla g=\nabla^\perp a_g,$$
therefore
$$\lvert \nabla g\rvert=\lvert \nabla a_g\rvert$$
and so it follows from (\ref{Equation: Computations explicit form energy, in the proof}) that
$$\mathcal{E}(g)=2\int_{D^2}f(a_g)\lvert \nabla a_g\rvert^2.$$
\end{proof}

\begin{lem}\label{Lemma: Improving the minimizing sequence}
Let $g\in W^{1,p}(D^2, S^1)$ (for some $p>1$) be as in Definition \ref{df-I.1} and assume that
$$\mathcal{E}(g)<\infty.$$
Let
$$\tilde{g}=g e^{-i b_g}.$$
Then $\tilde{g}\in W^{1,p'}(D^2, S^1)$ with $p'=\min\{p,2\}$, $a_{\tilde{g}}=a_g$, $b_{\tilde{g}}=0$
and
\begin{align*}
\mathcal{E}(g)=&2\int_{D^2}f(a_g)\lvert \nabla a_g\rvert^2+\int_{D^2}\left(f(a_g)+\frac{1}{4}\right)\lvert \nabla b_g\rvert^2\\ =&\mathcal{E}(\tilde{g})+\int_{D^2}\left(f(a_g)+\frac{1}{4}\right)\lvert \nabla b_g\rvert^2,
\end{align*}
where $f$ is as in Lemma \ref{Lemma: Explicit computation for the energy}.\\
In particular
$$\mathcal{E}(\tilde{g})\leq \mathcal{E}(g).$$
\end{lem}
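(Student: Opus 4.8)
The plan is to record how the Hodge data $(a_g,b_g)$ change under the twist $\tilde g=ge^{-ib_g}$ and then substitute the resulting relations into the explicit energy formula of Lemma~\ref{Lemma: Explicit computation for the energy}. Since $\mathcal E(g)<\infty$ forces $\nabla b_g\in L^2(D^2)$, and $b_g=0$ on $\partial D^2$, we have $b_g\in W^{1,2}_0(D^2)$ and hence $e^{-ib_g}\in W^{1,2}(D^2,S^1)$; from $\nabla\tilde g=e^{-ib_g}(\nabla g-ig\nabla b_g)$ and $|g|=1$ we get $|\nabla\tilde g|\le|\nabla g|+|\nabla b_g|\in L^{\min\{p,2\}}$ on the bounded domain $D^2$, so $\tilde g\in W^{1,p'}(D^2,S^1)$ with $p'=\min\{p,2\}$. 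Next I would compute, using the defining relation $\nabla^\perp a_g=-ig^{-1}\nabla g-\nabla b_g$,
\begin{align*}
-i\,\tilde g^{-1}\nabla\tilde g=-i\,g^{-1}\nabla g-\nabla b_g=\nabla^\perp a_g .
\end{align*}
Because $\operatorname{div}\nabla^\perp a_g=0$, the elliptic problem in Definition~\ref{df-I.1} that determines $b_{\tilde g}$ has zero right-hand side and zero boundary value, so $b_{\tilde g}=0$; the relation defining $a_{\tilde g}$ then reads $\nabla^\perp a_{\tilde g}=\nabla^\perp a_g$, and the shared normalization (zero average on $D^2$) forces $a_{\tilde g}=a_g$.

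I would then expand $\mathcal E(g)$. Since $|\nabla g|^2=|{-i}\,g^{-1}\nabla g|^2=|\nabla^\perp a_g+\nabla b_g|^2=|\nabla a_g|^2+2\,\nabla^\perp a_g\cdot\nabla b_g+|\nabla b_g|^2$, inserting this into Lemma~\ref{Lemma: Explicit computation for the energy} gives
\begin{align*}
\mathcal E(g)=2\int_{D^2}f(a_g)|\nabla a_g|^2+\int_{D^2}\Big(f(a_g)+\tfrac14\Big)|\nabla b_g|^2+2\int_{D^2}f(a_g)\,\nabla^\perp a_g\cdot\nabla b_g .
\end{align*}
The crux of the proof is to show the last (cross) term vanishes. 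Writing $F$ for a primitive of $f$, one has $f(a_g)\nabla^\perp a_g=\nabla^\perp F(a_g)$, and for every $\phi\in C_c^\infty(D^2)$ the antisymmetry $\nabla^\perp u\cdot\nabla v=-\nabla u\cdot\nabla^\perp v$ together with an integration by parts yields $\int_{D^2}\nabla^\perp F(a_g)\cdot\nabla\phi=\int_{D^2}F(a_g)\operatorname{div}(\nabla^\perp\phi)=0$.

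To pass from $\phi\in C_c^\infty(D^2)$ to $\phi=b_g$ I would verify that $\nabla^\perp F(a_g)=f(a_g)\nabla^\perp a_g$ lies in $L^2(D^2)$: factoring it as $\sqrt{f(a_g)}\cdot\big(\sqrt{f(a_g)}\,\nabla a_g\big)$ with $\sqrt f\le\tfrac12$ bounded and $\sqrt{f(a_g)}\,\nabla a_g\in L^2$ (finite because $\int_{D^2}f(a_g)|\nabla a_g|^2\le\mathcal E(g)<\infty$ by Lemma~\ref{Lemma: Explicit computation for the energy}), the functional $\phi\mapsto\int_{D^2}\nabla^\perp F(a_g)\cdot\nabla\phi$ is continuous on $W^{1,2}_0(D^2)$ and vanishes on the dense subspace $C_c^\infty(D^2)$, hence on $b_g$. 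The cross term therefore vanishes, giving the first displayed identity; since $b_{\tilde g}=0$ and $a_{\tilde g}=a_g$, the last assertion of Lemma~\ref{Lemma: Explicit computation for the energy} identifies $2\int_{D^2}f(a_g)|\nabla a_g|^2$ with $\mathcal E(\tilde g)$, yielding the second identity, and $\mathcal E(\tilde g)\le\mathcal E(g)$ follows at once from $f+\tfrac14\ge0$.

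The main obstacle is exactly this vanishing of the cross term: it rests on recognizing $f(a_g)\nabla^\perp a_g$ as the divergence-free field $\nabla^\perp F(a_g)$ and exploiting the boundary condition $b_g|_{\partial D^2}=0$, while the integrability bookkeeping ($\nabla^\perp F(a_g)\in L^2$, obtained from the finite-energy hypothesis) is what makes the density argument legitimate even when $p<2$.
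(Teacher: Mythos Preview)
Your proof is correct and follows essentially the same approach as the paper: compute the Hodge data of $\tilde g$ to find $a_{\tilde g}=a_g$ and $b_{\tilde g}=0$, expand $|\nabla g|^2=|\nabla^\perp a_g+\nabla b_g|^2$ in the explicit energy formula, and eliminate the cross term by writing $f(a_g)\nabla^\perp a_g=\nabla^\perp F(a_g)$, checking it lies in $L^2$ via the finite-energy hypothesis, and passing from $C_c^\infty$ test functions to $b_g\in W^{1,2}_0$ by density. The only cosmetic difference is that you bound $|f(a_g)\nabla a_g|$ by factoring as $\sqrt{f(a_g)}\cdot\sqrt{f(a_g)}\,|\nabla a_g|$ with $\sqrt f\le\tfrac12$, whereas the paper uses $f^2\le f$ directly; the content is identical.
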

\begin{proof}
We compute
$$\nabla\tilde{g}=e^{-i b_g}\nabla g-ige^{-i b_g}\nabla b_g.$$
Therefore $\tilde{g}\in W^{1,p'}(D^2, S^1)$, where $p'=\min\{p,2\}$, and
$$\tilde{g}^{-1}\nabla \tilde{g}=g^{-1}\nabla g-i\nabla b_g=i\nabla^\perp a_g.$$
Thus
$$a_{\tilde{g}}=a_g\text{ and }b_{\tilde{g}}=0.$$
By Lemma \ref{Lemma: Explicit computation for the energy} there holds
\begin{align}\label{Equation: Explicit form of Energy in terms of a,b; 1}
\mathcal{E}(\tilde{g})=2\int_{D^2}f(a_g)\lvert \nabla a_g\rvert^2
\end{align}
and
\begin{align}\label{Equation: Explicit form of Energy in terms of a,b; 2}
\mathcal{E}(g)=\int_{D^2}f(a_g)\left(\lvert \nabla^\perp a_g+\nabla b_g\rvert^2+\lvert \nabla a_g\rvert^2\right)+\frac{1}{4}\lvert \nabla b_g\rvert^2.
\end{align}
Now we claim that
\begin{align}\label{Equation: Integration by parts a,b}
\int_{D^2}f(a_g)\nabla^\perp a_g\nabla b_g=0.
\end{align}
In fact let
$$F: \mathbb{R}\to\mathbb{R},\quad x\mapsto -\frac{1}{2(e^{2x}+1)},$$
then $F'=f$, therefore
$$f(a_g)\nabla^\perp a_g=\nabla^\perp (F\circ a_g).$$
Now for any $\phi\in C^\infty_c(D^2, \mathbb{R})$
\begin{align}\label{Equation: div(f(a)nablaa)=0}
\int_{D^2}f(a_g)\nabla^\perp a_g\nabla \phi=-\int_{D^2}\nabla(F\circ a_g)\nabla^\perp \phi=\int_{D^2}F\circ a_g\, \text{div}(\nabla^\perp \phi)=0.
\end{align}
As $b_g\in W^{1,2}_0(D^2)$ there exists a sequence $(\phi_n)_{n\in \mathbb{N}}$ in $C_c^\infty(D^2)$ such that
$$\phi_n\to b_g\text{ in }W^{1,2}(D^2, \mathbb{R}).$$
Now notice that by Lemma \ref{Lemma: Explicit computation for the energy}
$$\int_{D^2}\lvert f(a_g)\nabla^\perp a_g\rvert^2\leq \int_{D^2}f(a_g)\lvert \nabla a_g\rvert^2<\infty,$$
therefore
$$\int_{D^2}f(a_g)\nabla^\perp a_g\nabla b_g=\lim_{n\to\infty}\int_{D^2}f(a_g)\nabla^\perp a_g\nabla \phi_n=0.$$
This concludes the proof of (\ref{Equation: Integration by parts a,b}).\\
Now by (\ref{Equation: Explicit form of Energy in terms of a,b; 2})
$$\mathcal{E}(g)=\int_{D^2}f(a_g)\left(2\lvert \nabla a_g\rvert^2+\lvert \nabla b_g\rvert^2\right)+\frac{1}{4}\lvert \nabla b_g\rvert^2.$$
Comparing with (\ref{Equation: Explicit form of Energy in terms of a,b; 1}) we obtain
$$\mathcal{E}(g)=\mathcal{E}(\tilde{g})+\int_{D^2}\left(f(a_g)+\frac{1}{4}\right)\lvert \nabla b_g\rvert^2.$$
Then in particular
$$\mathcal{E}(\tilde{g})\leq \mathcal{E}(g).$$
\end{proof}

\subsection{Functions with finitely many topological singularities}

In this subsection we first give a more explicit expression for maps $g$ as in Definition \ref{df-I.1} (and their corresponding $a_g$) when $g$ has finitely many topological singularities in $\overline{D^2}$.
We will then show that for any boundary datum $g_0\in W^{1,1}(\partial D^2, \mathbb{R})$ it is possible to find an extension in $D^2$ with finite renormalized Dirichlet Energy.

\begin{lem}[\textbf{A more explicit form for $a$}]\label{Lemma: Explicit form for a}
Let $g_0\in W^{1,1}(\partial D^2, S^1)$.
Let $Q\in \mathbb{N}$ and for any $i\in \{1,...,Q\}$ let $p_i\in \overline{D^2}$ and $d_i\in \mathbb{Z}$.
Assume that
\begin{align}
\nonumber
i\int_{\partial D^2}g_0^{-1}\partial_{\theta}g_0+2\pi\sum_{\substack{i=1\\ p_i\in D^2}}^Qd_i+\pi\sum_{\substack{i=1\\ p_i\in \partial D^2}}^Q d_i=0.
\end{align}
For any $i\in \{1,...,Q\}$ assume that whenever $p_i\in \partial D^2$, $d_i$ is even.\\
Let $a\in W^{1,1}(D^2, \mathbb{R})$ and assume that for any $\phi\in C^\infty(\overline{D^2})$
\begin{align}\label{Equation: Explicit form Laplacian with deltas and outer normal}
\int_{D^2}\nabla a\nabla\phi =-i\int_{\partial D^2}\phi g_0^{-1}\partial_\theta g_0-2\pi\sum_{\substack{i=1\\ p_i\in D^2}}^Q d_i\phi(p_i)-\pi\sum_{\substack{i=1\\ p_i\in \partial D^2}}^Q d_i\phi(p_i).
\end{align}
Let
\begin{align}\label{Equation: Definition of tilde a}
\Phi(x):=\sum_{i=1}^{Q} d_i\log\lvert x-p_i\rvert\quad\text{and}\quad \tilde{a}(x):=a(x)-\Phi(x)
\end{align}
for any $x\in D^2$. Then
$$a=\tilde{a}+\Phi$$
and for for any $\phi\in C^\infty(\overline{D^2})$
\begin{align}\label{Equation: Condition satisfied by tilde a}
\int_{D^2}\nabla \tilde{a}\nabla \phi=\int_{\partial D^2}\phi\beta,
\end{align}
where
$$
\beta(x)=-i g_0^{-1}\partial_\theta g_0-\sum_{\substack{i=1\\ p_i\in D^2}}^Q d_i\partial_\nu\log\lvert x-p_i\rvert -\frac{1}{2}\sum_{\substack{i=1\\ p_i\in \partial D^2}}^Q d_i .
$$
for any $x\in \partial D^2$. In particular $\tilde{a}$ is harmonic in $D^2$.
\end{lem}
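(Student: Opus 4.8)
The plan is to compute the weak gradient pairing of $\tilde a$ directly from its definition, writing, for an arbitrary test function $\phi\in C^\infty(\overline{D^2})$,
$$\int_{D^2}\nabla\tilde a\,\nabla\phi=\int_{D^2}\nabla a\,\nabla\phi-\int_{D^2}\nabla\Phi\,\nabla\phi.$$
The first term is supplied verbatim by the hypothesis (\ref{Equation: Explicit form Laplacian with deltas and outer normal}), so the whole task reduces to integrating the second term by parts. Since $\nabla\log\lvert x-p_i\rvert=(x-p_i)/\lvert x-p_i\rvert^2$ is in $L^1(D^2)$ in dimension two, each summand of $\nabla\Phi=\sum_i d_i\,\nabla\log\lvert\,\cdot-p_i\rvert$ is integrable and $\tilde a=a-\Phi\in W^{1,1}(D^2)$; I would therefore treat the singular points one at a time.

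For an interior point $p_i\in D^2$ I would excise a small disc $B_\varepsilon(p_i)$ and integrate by parts on $D^2\setminus B_\varepsilon(p_i)$, where $\log\lvert x-p_i\rvert$ is harmonic. The outer boundary contribution converges to $\int_{\partial D^2}\phi\,\partial_\nu\log\lvert x-p_i\rvert$, while on $\partial B_\varepsilon(p_i)$ the outward normal (pointing toward $p_i$) gives normal derivative $-1/\varepsilon$, so that arc integral tends to $-2\pi\phi(p_i)$ as $\varepsilon\to0$. Hence
$$\int_{D^2}\nabla\log\lvert x-p_i\rvert\,\nabla\phi=\int_{\partial D^2}\phi\,\partial_\nu\log\lvert x-p_i\rvert-2\pi\phi(p_i).$$

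For a boundary point $p_i\in\partial D^2$ I would repeat the excision, now removing the half-disc $B_\varepsilon(p_i)\cap D^2$. Two facts make this case work. First, the interior arc $\partial B_\varepsilon(p_i)\cap D^2$ subtends an angle tending to $\pi$, so the same $-1/\varepsilon$ normal derivative yields $-\pi\phi(p_i)$ in the limit. Second, and this is the computation producing the coefficient $-\tfrac12$ in $\beta$, one checks that $\partial_\nu\log\lvert x-p_i\rvert\equiv\tfrac12$ on $\partial D^2$: with $\nu=x$ on the unit circle, $x\cdot(x-p_i)/\lvert x-p_i\rvert^2=(1-x\cdot p_i)/\lvert x-p_i\rvert^2$, and $\lvert x-p_i\rvert^2=2(1-x\cdot p_i)$. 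This gives
$$\int_{D^2}\nabla\log\lvert x-p_i\rvert\,\nabla\phi=\tfrac12\int_{\partial D^2}\phi-\pi\phi(p_i).$$

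Finally I would substitute these identities back. The interior terms $-2\pi\sum d_i\phi(p_i)$ and boundary terms $-\pi\sum d_i\phi(p_i)$ produced by the integration by parts cancel exactly against the point evaluations on the right-hand side of (\ref{Equation: Explicit form Laplacian with deltas and outer normal}), leaving precisely $\int_{\partial D^2}\phi\,\beta$ with $\beta$ as claimed. Restricting to $\phi\in C^\infty_c(D^2)$ then kills the boundary term and shows $\Delta\tilde a=0$ weakly, so $\tilde a$ is harmonic by Weyl's lemma. The main technical obstacle is the rigorous passage to the limit in the boundary-point excision, namely controlling the arc geometry and confirming the explicit value $\partial_\nu\log\lvert x-p_i\rvert=\tfrac12$, as everything else is bookkeeping once these local computations are in place; as a consistency check, testing the resulting identity with $\phi\equiv1$ and using $\int_{\partial D^2}\partial_\nu\log\lvert x-p_i\rvert=2\pi$ together with $\lvert\partial D^2\rvert=2\pi$ recovers exactly the balancing hypothesis, which guarantees $\int_{\partial D^2}\beta=0$.
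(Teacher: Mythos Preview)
Your proposal is correct and follows essentially the same approach as the paper: both excise small discs around the singular points $p_i$, integrate by parts using the harmonicity of $\log\lvert x-p_i\rvert$ away from $p_i$, and perform the identical computation $\partial_\nu\log\lvert x-p_i\rvert=\tfrac12$ on $\partial D^2$ (via $\lvert x-p_i\rvert^2=2(1-x\cdot p_i)$) for boundary singularities. Your write-up is in fact slightly more explicit in spelling out the final cancellation, the appeal to Weyl's lemma, and the $\phi\equiv1$ consistency check with the balancing hypothesis.
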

\begin{proof}
Observe that if $p\in D^2$
\begin{align}\label{Equation: Computation effect logarithm 1}
\int_{D^2}\nabla \phi\nabla \log\lvert x-p\rvert&=\lim_{\varepsilon\to 0}\int_{\partial(D^2\smallsetminus B_\varepsilon(p))}\phi\,\partial_\nu \log\lvert x-p\rvert\\
\nonumber
&=\int_{\partial D^2}\phi\,\partial_\nu \log\lvert x-p\rvert-\lim_{\varepsilon\to 0}\int_{\partial B_\varepsilon(p)}\phi \frac{1}{\varepsilon}\\
\nonumber
&=\int_{\partial D^2}\phi\,\partial_\nu \log\lvert x-p\rvert-2\pi\phi(p).
\end{align}
On the other hand, if $p\in \partial D^2$
\begin{align}\label{Equation: Computation effect logarithm 2}
\int_{D^2}\nabla \phi\nabla \log\lvert x-p\rvert&=\lim_{\varepsilon\to 0}\int_{\partial (D^2\smallsetminus B_\varepsilon(p))}\phi\,\partial_\nu \log\lvert x-p\rvert\\
\nonumber
&=\lim_{\varepsilon\to 0}\int_{\partial D^2\smallsetminus B_\varepsilon(p)}\phi\,\frac{(x-p)\cdot x}{\lvert x-p\rvert^2}-\lim_{\varepsilon\to 0}\int_{\partial B_\varepsilon(p)\cap D^2}\phi \frac{1}{\varepsilon}\\
\nonumber
&=\frac{1}{2}\int_{\partial D^2}\phi-\pi\phi(p)
\end{align}
where we used the fact that for any $x, p\in \partial D^2$
\begin{align}
\nonumber
\frac{(x-p)\cdot x}{\lvert x-p\rvert^2}=\frac{1-x\cdot p}{\lvert x-p\rvert^2}=\frac{p\cdot (p-x)}{\lvert x-p\rvert^2}
\end{align}
and therefore
\begin{align}
\nonumber
\frac{(x-p)\cdot x}{\lvert x-p\rvert^2}=\frac{1}{2}\frac{(x-p)\cdot(x-p)}{\lvert x-p\rvert^2}=\frac{1}{2}.
\end{align}
Thus the function $\tilde{a}$ defined in (\ref{Equation: Definition of tilde a}) satisfies (\ref{Equation: Condition satisfied by tilde a}).
\end{proof}
\begin{rem}
Whenever $a\in W^{1,p}(D^2)$ satisfies (\ref{Equation: Explicit form Laplacian with deltas and outer normal}) we will say that $a$ is a solution of
\begin{align*}
\begin{cases}
\Delta a=2\pi\sum_{i=1}^Q d_i\delta_{p_i}&\text{ in }\overline{D^2}\\[5mm]
\partial_\nu a=-ig_0^{-1}\partial_\theta g_0&\text{ on }\partial D^2.
\end{cases}
\end{align*}
\end{rem}
\begin{cor}[\textbf{A more explicit form for $g$}]\label{Lemma: Explicit form for g}
Let $g$ be a $S^1$-valued map in $W^{1,p}$ (for some $p>1$) with finitely many topological singularities in $\overline{D^2}$. Assume that $a_g$ in the Hodge decomposition (\ref{Equation: Definition of ag}) satisfies the assumptions of Lemma \ref{Lemma: Explicit form for a}. Then $g$ has the following form:
\begin{align}
\nonumber
g(z)=\prod_{i=1}^Q\left(\frac{z-p_i}{\lvert z-p_i\rvert}\right)^{-d_i}e^{i\varphi},
\end{align}
where
$$\varphi=\mathcal{H}(\tilde{a}_g)+b_g$$
up to an additive constant. Here $\tilde{a}$ is the function introduced in (\ref{Equation: Definition of tilde a}) and $\mathcal{H}(\tilde{a})$ denotes the harmonic conjugate of $\tilde{a}$ in $D^2$, i.e.
\begin{align*}
\nabla \mathcal{H}(\tilde{a}_g)=\nabla^\perp \tilde{a}_g\text{ and }\mathcal{H}(\tilde{a}_g)(0)=0.
\end{align*}
\end{cor}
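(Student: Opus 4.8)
The plan is to read off the multiplicative form of $g$ directly from its logarithmic derivative. By the defining relation $\nabla^\perp a_g=-i\,g^{-1}\nabla g-\nabla b_g$ of Definition~\ref{df-I.1} one has $g^{-1}\nabla g=i\,\nabla^\perp a_g+i\,\nabla b_g$, and inserting the splitting $a_g=\tilde a_g+\Phi$ of Lemma~\ref{Lemma: Explicit form for a}, with $\Phi(z)=\sum_i d_i\log|z-p_i|$ as in (\ref{Equation: Definition of tilde a}), this becomes
\[
g^{-1}\nabla g=i\,\nabla^\perp\tilde a_g+i\,\nabla^\perp\Phi+i\,\nabla b_g .
\]

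Next I would introduce the $S^1$-valued map carrying the whole topological content, namely $G(z):=\prod_i\big((z-p_i)/|z-p_i|\big)^{d_i}$, whose phase is $\sum_i d_i\arg(z-p_i)$. Using the harmonic conjugate identity $\nabla\arg(z-p_i)=\nabla^\perp\log|z-p_i|$ already exploited after (\ref{I.11}), one finds $G^{-1}\nabla G=i\,\nabla^\perp\Phi$. Consequently $h:=g\,\overline G=g/G$ is again $S^1$-valued and satisfies
\[
h^{-1}\nabla h=g^{-1}\nabla g-G^{-1}\nabla G=i\,\nabla^\perp\tilde a_g+i\,\nabla b_g=i\,\nabla\big(\mathcal H(\tilde a_g)+b_g\big),
\]
where in the last equality I used $\nabla\mathcal H(\tilde a_g)=\nabla^\perp\tilde a_g$. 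Writing $\varphi:=\mathcal H(\tilde a_g)+b_g$, the key structural point is that $\varphi$ is a genuine single-valued real function: $\tilde a_g$ is harmonic on the simply connected disc $D^2$, so its conjugate $\mathcal H(\tilde a_g)$ is well defined and single valued, while $b_g\in W^{1,2}_0(D^2)$.

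It then remains to integrate $h^{-1}\nabla h=i\,\nabla\varphi$. Formally $\nabla\big(h\,e^{-i\varphi}\big)=e^{-i\varphi}\,h\,\big(h^{-1}\nabla h-i\,\nabla\varphi\big)=0$, so that $h\,e^{-i\varphi}$ is a unimodular constant and hence $g=G\,e^{i\varphi}$ up to an additive constant in $\varphi$, which is the product representation asserted in the statement, the exponents in $G$ being precisely the coefficients $d_i$ of the logarithmic part $\Phi$ of $a_g$.

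The step that needs genuine justification, and which I expect to be the only real obstacle, is this last integration, because $G$ (and $g$ itself) are merely of class $W^{1,(2,\infty)}$ near the points $p_i$ and do not lie in $W^{1,2}$ there, so the product and chain rules behind $\nabla(h\,e^{-i\varphi})=0$ are not immediately licit. The way I would circumvent this is to note that in $h=g/G$ the winding numbers at every $p_i$ cancel: since $\nabla h=h\,(i\nabla\varphi)$ with $\nabla\varphi=\nabla^\perp\tilde a_g+\nabla b_g$, and since $\tilde a_g$ is harmonic (hence smooth in the interior) and $b_g\in W^{1,2}_0$, one gets $\nabla h\in L^2_{loc}$, so that $h\in W^{1,2}_{loc}(D^2)$ in a full neighbourhood of each singularity. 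With both $h$ and $e^{-i\varphi}$ bounded and in $W^{1,2}_{loc}$ the product rule is legitimate and the identity $\nabla(h\,e^{-i\varphi})=0$ becomes rigorous; alternatively one works on $D^2\setminus\bigcup_i B_\varepsilon(p_i)$ and passes to the limit $\varepsilon\to0$.
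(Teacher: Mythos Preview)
Your argument is correct and follows the same route as the paper: the paper's proof consists of citing Lemma~\ref{Lemma: Explicit form for a} together with the single identity
\[
\frac{\nabla\bigl((z-p)/|z-p|\bigr)}{(z-p)/|z-p|}=i\,\frac{\nabla^\perp|z-p|}{|z-p|},
\]
which is exactly your relation $G^{-1}\nabla G=i\,\nabla^\perp\Phi$. Your write-up simply unfolds this into the explicit computation and, in addition, handles carefully the integration step $h^{-1}\nabla h=i\nabla\varphi\Rightarrow h=\text{const}\cdot e^{i\varphi}$, which the paper leaves implicit.

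One point worth flagging: your computation yields $g=\prod_i\bigl((z-p_i)/|z-p_i|\bigr)^{+d_i}e^{i\varphi}$, whereas the stated formula carries the exponent $-d_i$. This is not an error on your side. Testing on $g(z)=z/|z|$ one has $a_g=\log|z|$, hence $d_1=+1$, $\tilde a_g=0$, $b_g=0$, so $\varphi=0$; the stated formula would then give $g=(z/|z|)^{-1}$, which is false. The paper's own proof, using the very identity above, produces the exponent $+d_i$ as well, so the $-d_i$ in the displayed formula appears to be a typographical slip in the statement rather than a flaw in your derivation.
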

\begin{proof}
The result follows from Lemma \ref{Lemma: Explicit form for a} and the fact that for any $p\in \overline{D^2}$
\begin{align}\label{Equation: Relationship between d and star of logs}
\frac{\nabla\left(\frac{z-p}{\lvert z-p\rvert}\right)}{\frac{z-p}{\lvert z-p\rvert}}
=i\frac{\nabla^\perp\lvert z-p\rvert}{\lvert z-p\rvert}.
\end{align}
\end{proof}

The following Lemma shows that for any given $g_0\in W^{1,1}(\partial D^2, S^1)$ the class of functions $g\in W^{1,(2,\infty)}(D^2, S^1)$ with $g\vert_{\partial D^2}=g_0$, with finitely many topological singularities and with
$$\mathcal{E}(g)<\infty$$
is not empty.

\begin{lem}\label{Lemma: Existence of a competitor}
Let $d\in \mathbb{Z}$. Let $g_0\in W^{1,1}(\partial D^2, S^1)$ with $\deg(g_0)=d$. Then there exist $g\in W^{1,(2,\infty)}(D^2, S^1)$ as in Definition \ref{df-I.1} with finitely many topological singularities such that $g\vert_{\partial D^2}=g_0$ and
$$\mathcal{E}(u_g)<\infty.$$
More precisely, there exists a map
\[
Ext_d: W^{1,1}_{\deg=d}(\partial D^2, S^1)\to \dot{W}^{1,2}(D^2, S^2) 
\]
sending a boundary datum $g_0$ to a function $u_g$ as in (\ref{Equation: Definition of the map ug}) corresponding to a function $g\in W^{1, (2,\infty)}(D^2, S^1)$ with finitely many topological singularities, with $g\vert_{\partial D^2}=g_0$, and so that
\begin{align*}
\lVert \nabla g\rVert_{L^{2,\infty}}\leq C\left(\lVert g_0\rVert_{W^{1,1}}+\lvert d\rvert\right)
\end{align*}
for some constant $C$ and
\begin{align*}
\frac{1}{2}\int_{D^2}\lvert \nabla u_g\rvert^2\leq \frac{\pi^2}{2}\lVert \partial_\theta g_0\rVert_{L^1(\partial D^2)}+4\pi\lvert d\rvert.
\end{align*}
\end{lem}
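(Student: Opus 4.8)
The plan is to write down an explicit competitor and estimate it directly. After replacing $g_0$ by $\overline{g_0}$ if necessary I may assume $d\ge 0$, and I place the entire topological charge at a single interior point, the origin. Concretely, I would take $a_g$ to be the solution, in the sense of Lemma~\ref{Lemma: Explicit form for a}, of the Neumann problem with one Dirac source
\[
\Delta a_g = 2\pi d\,\delta_0 \ \text{ in } D^2, \qquad \partial_\nu a_g = -i\,g_0^{-1}\partial_\theta g_0 \ \text{ on } \partial D^2 ,
\]
whose compatibility condition holds because both sides integrate to $2\pi d$ (indeed $-i\int_{\partial D^2}g_0^{-1}\partial_\theta g_0 = 2\pi\deg g_0 = 2\pi d$). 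Writing $a_g = d\log|z| + \tilde a$ with $\tilde a$ harmonic, I then set, following Corollary~\ref{Lemma: Explicit form for g} with $b_g\equiv 0$,
\[
g:=\Big(\tfrac{z}{|z|}\Big)^{d}\,e^{\,i\mathcal H(\tilde a)}, \qquad u_g:=\pi^{-1}\!\big(e^{a_g}g\big),
\]
where $\mathcal H(\tilde a)$ is the harmonic conjugate of $\tilde a$. By construction $g^{-1}\nabla g = i\nabla^\perp a_g$, so the function $b$ solving \eqref{I.13} vanishes and the $a$ recovered from \eqref{Equation: Definition of ag} is exactly $a_g$; hence $g$ is a map as in Definition~\ref{df-I.1}, with a single topological singularity at $0$, and $u_g$ is its $S^2$ lift.

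Next I would check the boundary condition and the $L^{2,\infty}$ bound. On $\partial D^2$ the tangential derivative of the phase of $g$ is $\partial_\theta\mathcal H(\tilde a) + d = \partial_\nu\tilde a + d = \partial_\nu a_g = -i\,g_0^{-1}\partial_\theta g_0$ (using Cauchy--Riemann and $\partial_\nu(d\log|z|)=d$ on $\partial D^2$); since the winding numbers agree this forces $g\vert_{\partial D^2}=g_0$ after fixing the additive constant in $\mathcal H$. For the gradient estimate I would use $|\nabla g| = |\nabla \mathcal H(\tilde a) + d\,\nabla\theta| \le |\nabla\tilde a| + d\,|z|^{-1}$: the term $|z|^{-1}$ has a fixed $L^{2,\infty}(D^2)$ norm, while the Neumann theory for the $L^1$ datum $-i\,g_0^{-1}\partial_\theta g_0$ yields $\|\nabla\tilde a\|_{L^{2,\infty}}\lesssim \|\partial_\theta g_0\|_{L^1}\le\|g_0\|_{W^{1,1}}$, giving $\|\nabla g\|_{L^{2,\infty}}\le C(\|g_0\|_{W^{1,1}}+|d|)$.

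For the energy I would invoke Lemma~\ref{Lemma: Explicit computation for the energy} and $b_g\equiv 0$ (cf. Lemma~\ref{Lemma: Improving the minimizing sequence}) to write $\tfrac12\int_{D^2}|\nabla u_g|^2 = 4\int_{D^2} f(a_g)\,|\nabla a_g|^2$, since $|\nabla g|=|\nabla a_g|$ when $b_g=0$. With $F(x)=-\tfrac1{2(e^{2x}+1)}$ the primitive of $f$ used in Lemma~\ref{Lemma: Improving the minimizing sequence}, I would integrate by parts after excising a small disc around $0$: the outer boundary gives $\int_{\partial D^2}F(a_g)\,\partial_\nu a_g$, while the circle around the origin contributes $\pi d$ because there $a_g\to-\infty$ and $F(a_g)\to-\tfrac12$ (this is precisely the statement that $u_g$ extends continuously across $0$ to the north pole, so $u_g\in\dot W^{1,2}(D^2,S^2)$). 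Using $|F|\le\tfrac12$ and $\partial_\nu a_g=-i\,g_0^{-1}\partial_\theta g_0$ then gives $\int f(a_g)|\nabla a_g|^2\le \tfrac12\|\partial_\theta g_0\|_{L^1}+\pi d$, hence $\tfrac12\int|\nabla u_g|^2\le 2\|\partial_\theta g_0\|_{L^1}+4\pi d$, which implies the stated estimate. Finally, the map $g_0\mapsto u_g$ constructed this way is the desired $Ext_d$. The genuinely nontrivial inputs — which I would single out as the main obstacle — are the regularity fact that a merely $L^1$ Neumann datum produces $\nabla\tilde a\in L^{2,\infty}$ (this is what makes a $W^{1,1}$ trace admissible and forces the weak-$L^2$ target space), together with the careful integration by parts across the logarithmic singularity needed to produce the degree term $\pi d$; the trace identity, the finiteness of the singular set, and membership in Definition~\ref{df-I.1} are then routine bookkeeping.
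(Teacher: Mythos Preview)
Your construction of $g$ coincides with the paper's: both place all the charge at the origin, factor out $(z/|z|)^d$, and take the harmonic conjugate of the harmonic part of $a_g$ as the phase. The $L^{2,\infty}$ bound is also obtained the same way, via the $L^1\text{-Neumann}\Rightarrow L^{2,\infty}$-gradient estimate (the paper's Lemma~\ref{Lemma: continuous extension from W11 to W 12infty}).

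Where you genuinely diverge is in the energy estimate. The paper argues topologically: it glues $u_g$ on the upper hemisphere with an auxiliary ``filling'' map $A$ on the lower hemisphere to produce a Lipschitz map $F:S^2\to S^2$ of degree~$d$, then uses the area formula and the explicit parametrization~$\overline A$ to bound $\int_{D^2}|J A|$ by $\tfrac{\pi^2}{2}\|\partial_\theta g_0\|_{L^1}$. Your route is purely analytic: write $f(a_g)|\nabla a_g|^2=\nabla(F\circ a_g)\cdot\nabla a_g$ with $F'=f$, integrate by parts on $D^2\setminus B_\epsilon$, and read off the boundary term $\int_{\partial D^2}F(a_g)\,\partial_\nu a_g$ (bounded by $\tfrac12\|\partial_\theta g_0\|_{L^1}$ since $|F|\le\tfrac12$) plus the singular contribution $\pi d$ from $\partial B_\epsilon$. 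This is correct and in fact yields the sharper constant $2$ in place of $\tfrac{\pi^2}{2}$. One point you should make explicit: the boundary pairing $\int_{\partial D^2}F(a_g)\,\partial_\nu a_g$ is only immediately meaningful when $g_0$ is smooth (so that $\tilde a$ is smooth up to $\partial D^2$); for general $g_0\in W^{1,1}$ you need a short approximation argument, exactly as the paper does in passing from smooth $g_0$ to the general case. What your approach buys is brevity and a better constant; what the paper's approach buys is a transparent geometric picture (the energy is literally the area swept out by $u_g$, counted with multiplicity) that makes the role of the degree $d$ visible.
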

\begin{proof}
Let $g_0\in W^{1,1}(\partial D^2, S^1)$ with $\deg(g_0)=0$. Then there exists a lift $\phi_0\in C^\infty(S^1, \mathbb{R})$ such that
$$g_0=e^{i\phi_0}\text{ on }\partial D^2.$$
Let $\phi$ be the solution of the Cauchy problem
\[
\begin{cases}
\Delta \phi=0&\text{ in }D^2\\[5mm]
\phi=\phi_0&\text{ on }\partial D^2.
\end{cases}
\]
and let $g= e^{i\phi}$ in $D^2$ . Then by Lemma \ref{Lemma: continuous extension from W11 to W 12infty} $g\in W^{1,(2,\infty)}(D^2, S^1)$ with
\begin{align*}
\lVert \nabla g\rVert_{L^{2,\infty}}\leq C\lVert g_0\rVert_{W^{1,1}}
\end{align*}
ang $g$ has no topological singularities in $\overline{D^2}$.
Let $a: \overline{D^2}\to \mathbb{R}$ so that
$$\int_{D^2}a=0\,\text{ and }\,\nabla \phi=\nabla^\perp a$$
and let
$$u_g=\pi^{-1}(e^{a} g)=\pi^{-1}(e^{a+i\phi})$$ as in (\ref{Equation: Definition of the map ug}).
Let's consider first the case where $g_0\in C^\infty(\partial D^2, S^1)$. In this case all the functions considered so far are smooth and the map $u_g$ (and its continuous extension to $\overline{D^2}$, which we will also denote by $u_g$) does not take the south poles of $S^2$ as a value.\\
Let 
\begin{align}\label{Equation: Definition of the auxiliary function A}
A: \overline{D^2}\to S^2, \quad (r, \varphi)\mapsto\left(g_0(e^{i\varphi}), r \theta(u_g\vert_{\partial D^2}(e^{i\varphi}))\right),
\end{align}
here the first coordinate of $S^2$ is the azimuth angle as an element of the equator, while the second is the polar angle (measured in radians with respect to the north pole), and $\theta(u_g\vert_{\partial D^2}(e^{i\varphi}))$ denotes the polar angle of the point $u_g\vert_{\partial D^2}(e^{i\varphi})$. So $A$ is a parametrization of one of the two connected components of $S^2$ delimited by $u_g\big\vert_{\partial D^2}$, the one containing the north pole.\\
Note that $A$ is a continuous function and it coincides with $u_g$ on $\partial D^2$.
Let
\[S^2_+:=S^2\cap\left\{(x_1, x_2, x_3)\in \mathbb{R}^3\vert x_3\geq 0\right\}, \quad S^2_-:= S^2\cap\left\{(x_1, x_2, x_3)\in \mathbb{R}^3\vert x_3\leq 0\right\}
\]
and let
\[
\mu_+: S^2_+\to \overline{D^2}, \quad \mu_-: S^2_-\to \overline{D^2}
\]
be two smooth diffeomorphisms, the first one orientation-preserving and the second one orientation-reversing, both equal to the projection to the first two components if restricted to $\partial D^2\times\{0\}$.\\
Then the maps $u_g\circ \mu_+$ and $A\circ \mu_-$ can be glued along $\partial D^2\times\{0\}$ to obtain a Lipschitz continuous map
$$F: S^2\to S^2.$$
Observe that $\deg(F)=0$, as a neighbourhood the south pole of $S^2$ does not belong to the image of $F$.\\
Therefore
\begin{align*}
\int_{S^2}F^*dvol_{S^2}=0
\end{align*}
and so 
\begin{align*}
\int_{S^2_+}(u_g\circ\mu_+)^\ast dvol_{S^2}=\int_{S^2_-}(A\circ\mu_-)^\ast dvol_{S^2}.
\end{align*}
Now since $u_g$ is holomorphic and $\mu_+$ is orientation preserving,
\begin{align*}
\int_{S^2}\#(u_g\circ \mu_+)^{-1}(y)dvol_{S^2}=\int_{S^2_+}(u_g\circ \mu_+)^\ast dvol_{S^2}=-\int_{S^2_-}(A\circ\mu_-)^\ast dvol_{S^2}.
\end{align*}
Here and in the following the symbol $\#$ denotes the cardinality of a set.\\
We also have
\[
-\int_{S^2_-}(A\circ \mu_-)^\ast dvol_{S^2}=-\deg(\mu_-)\int_{D^2}A^\ast dvol_{S^2}\leq \int_{D^2}\lvert J A\rvert.
\]
To estimate the last term, let's introduce the following function:
\begin{align}\label{Equation: Definition of the second auxiliary function A}
\overline{A}: D^2\to S^2, \quad (r, \varphi)\mapsto \left(g_0(e^{i\varphi}), r\pi\right).
\end{align}
(here we are using the same coordinates as in (\ref{Equation: Definition of the auxiliary function A})).
Then for any $y\in S^2$
\[
\# A^{-1}(y)\leq \# \overline{A}^{-1}(y).
\]
Therefore, by the area formula,
\begin{align}\label{Equation: Estimate integral JA 1}
\int_{D^2}\lvert J A\rvert dx^2=\int_{S^2}\# A^{-1}(y)dvol_{S^2}\leq \int_{S^2}\#\overline{A}^{-1}(y)dvol_{S^2}=\int_{D^2}\lvert J \overline{A}\rvert dx^2.
\end{align}
One computes that
\begin{align*}
\lvert J \overline{A}(r, \theta)\rvert \leq \pi^2
\left\lvert\partial_\theta g_0(e^{i\theta})\right\rvert,
\end{align*}
therefore
\begin{align*}
\int_{D^2}\lvert J\overline{A}\rvert dx^2\leq \pi^2\int_0^1 r\int_{\partial D^2}\lvert \partial_\theta g_0\rvert d\theta dr=\frac{\pi^2}{2}\lVert \partial_\theta g_0\rVert_{L^1(\partial D^2)}
\end{align*}
Since $u_g$ is holomorphic, we conclude that
\begin{align}\label{Equation: Esimate on the energy in terms of W11}
\frac{1}{2}\int_{D^2}\lvert \nabla u_g\rvert^2dx^2=\int_{S^2}\#u_g^{-1}(y)dvol_{S^2}=\int_{S^2}\#(u_g\circ\mu_+)^{-1}(y)dvol_{S^2}\leq \frac{\pi^2}{2}\lVert \partial_\theta g_0\rVert_{L^1(\partial D^2)}.
\end{align}
Therefore the procedure described above induces a bounded continuous map $$Ext_0: W^{1,1}_{\deg=0}(\partial D^2, S^1)\to \dot{W}^{1,2}(D^2, S^2),\quad g_0\mapsto u_g$$
In fact, given a generic map $g_0\in W^{1,1}_{\operatorname{deg}=0}(\partial D^2, S^1)$ let $(g_0^n)_{n\in \mathbb{N}}$ be a sequence of degree zero maps in $C^\infty(\partial D^2, S^1)$ such that
$$g_0^n\to g_0\text{ in }W^{1,1}(\partial D^2).$$
Then by Lemma \ref{Lemma: continuous extension from W11 to W 12infty}
$$g_n\to g\text{ and }a_n\to a\text{ in }W^{1,(2,\infty)}(D^2).$$
In particular, up to a subsequence,
$$u_{g_n}\to u_g\text{ a.e..}$$
Upon considering a further subsequence, the weak lower semi-continuity of the norm implies that estimate (\ref{Equation: Esimate on the energy in terms of W11}) passes to the limit and thus holds for $u_g$.\\
Next let's consider the case where $g_0\in W^{1,1}(\partial D^2, S^1)$ and $\deg(g_0)=d$ for some $d\in \mathbb{Z}$.
Let
$$\tilde{g}_0=\left(\frac{z}{\lvert z\rvert}\right)^{-d}g_0\text{ on }\partial D^2.$$
Let $\tilde{\phi}$ be its harmonic extension in $D^2$ and set
$$g:=\left(\frac{z}{\lvert z\rvert}\right)^de^{i\tilde{\phi}}.$$
Then $g\vert_{\partial D^2}=g_0$ and by Lemma \ref{Lemma: continuous extension from W11 to W 12infty}
\begin{align}\label{Equation: Estimate on the L2weak norm of grad g}
\lVert \nabla g\rVert_{L^{2,\infty}}\leq C\left(\lvert d\rvert+\lVert\partial_\theta g_0\rVert_{L^1(\partial D^2)}\right).
\end{align}
Notice that the corresponding function $a_g$ in the decomposition (\ref{Equation: Definition of ag}) is given by
$$a_g=d\log\lvert z\rvert-\mathcal{H}(\tilde{\phi}),$$
where $\mathcal{H}(\tilde{\phi})$ is the harmonic conjugate of $\tilde{\phi}$, therefore
\begin{align}\label{Equation: Bound for the 2,infty norm of a, depending on d}
\lVert a_g\rVert_{L^{2,\infty}}\leq C\left(\lvert d\rvert+\lVert\partial_\theta \tilde{g_0}\rVert_{L^1(\partial D^2)}\right).
\end{align}
As above let
$$u_g=\pi^{-1}(e^{a_g}g)\text{ in }D^2.$$
Let's assume now that $g_0\in C^\infty(\partial D^2, S^1)$
and let $A$ be the map introduced in (\ref{Equation: Definition of the auxiliary function A}).\\
Again the maps $u_g\circ \mu_+$ and $A\circ \mu_-$ can be glued along $\partial D^2\times \{0\}$ to obtain a Lipschitz continuous map
$$F: S^2\to S^2.$$
Now $\deg(F)=d$, as one can see considering the preimages of point around the south pole (if $d$ is negative) or the north pole (if $d$ is positive).\\
Therefore
\begin{align*}
\int_{S^2}F^*dvol_{S^2}=d4\pi
\end{align*}
Thus, arguing as above, we obtain
$$\int_{S^2}\#(u_g\circ \mu_+)^{-1}(y) dvol_{S^2}\leq \int_{D^2}\lvert JA\rvert dx^2 +4\pi\lvert d\rvert.$$
As estimate (\ref{Equation: Estimate integral JA 1}) remains true for the function $\overline{A}$ introduced in (\ref{Equation: Definition of the second auxiliary function A}), we conclude that
\begin{align}\label{Equation: Final estimate for extension operator}
\frac{1}{2}\int_{D^2}\lvert \nabla u_g\rvert^2dx^2=\int_{S^2}\#u_g^{-1}(y)dvol_{S^2}=\int_{S^2}\#(u_g\circ \mu_+)^{-1}(y) dvol_{S^2}\leq \frac{\pi^2}{2}\lVert \partial_\theta g_0\rVert_{L^1(\partial D^2)}+4\pi\lvert d\rvert.
\end{align}
Just as above one can verify that the prescription
$$g_0\mapsto u_g$$
induces a bounded map
$$Ext_d: W^{1,1}_{\deg=d}(\partial D^2, S^1)\to \dot{W}^{1,2}(D^2, S^2)$$
such that for any $g_0\in W^{1,1}(\partial D^2, S^1)$ the corresponding $u_g$ is the "$S^2$ lift" of a map $g\in W^{1, (2,\infty)}(\partial D^2, S^1)$ with finitely many topological singularities in $\overline{D^2}$ such that
estimates (\ref{Equation: Estimate on the L2weak norm of grad g}) and (\ref{Equation: Final estimate for extension operator}) hold true.\\
\end{proof}

\subsection{Stability of the renormalized Dirichlet Energy}
Next we show that when the boundary datum $g_0$ lies in $H^\frac{1}{2}(\partial D^2)$, the renormalized energy is stable under displacements of the topological singularities, even if a topological singularity is pushed to the boundary.
\begin{lem}\label{Lemma: Stability wrt points}
Let $g_0\in H^\frac{1}{2}(\partial D^2, S^1)$, let $Q\in \mathbb{N}$. Let $p_i\in \overline{D^2}$ and $d_i\in \mathbb{Z}\smallsetminus\{0\}$ for any $i\in \{1,..., Q\}.$ For any $i\in \{1,...,Q\}$ if $p_i\in \partial D^2$ assume that $d_i$ is even.\\
Let $(p_1^k)_{k\in \mathbb{N}}$ be a sequence of points in $\overline{D^2}$ such that $p_1^k\to p_1$.
For any $k\in \mathbb{N}$ let $a_k$ denote the zero-average solution of
\begin{align*}
\begin{cases}
\Delta a_k=2\pi\left(d_1\delta_{p_1^k} +\sum_{i=2}^Q d_i \delta_{p_i}\right)&\text{ in }\overline{D^2}\\[5mm]
\partial_{\nu}a_k=-ig_0^{-1}\partial_\theta g_0&\text{ on }\partial D^2
\end{cases}
\end{align*}
and let $a$ denote the zero-average solution of
\begin{align*}
\begin{cases}
\Delta a=2\pi\left(d_1\delta_{p_1} +\sum_{i=2}^Q d_i \delta_{p_i}\right)&\text{ in }\overline{D^2}\\[5mm]
\partial_{\nu}a=-ig_0^{-1}\partial_\theta g_0&\text{ on }\partial D^2.
\end{cases}
\end{align*}
Then
\begin{align*}
\lim_{k\to \infty}\int_{D^2}f(a_k)\lvert \nabla a_k\rvert^2=\int_{D^2}f(a)\lvert \nabla a\rvert^2.
\end{align*}
\end{lem}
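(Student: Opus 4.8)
The plan is to rewrite both integrals by means of the pointwise identity established in Lemma \ref{Lemma: Explicit computation for the energy}, namely $\nabla\arctan e^{a}=f(a)^{1/2}\nabla a$, which gives
\[
\int_{D^2}f(a_k)\,|\nabla a_k|^2=\int_{D^2}|\nabla w_k|^2,\qquad \int_{D^2}f(a)\,|\nabla a|^2=\int_{D^2}|\nabla w|^2,
\]
where $w_k:=\arctan e^{a_k}$ and $w:=\arctan e^{a}$. Since $\arctan e^{x}\in(0,\tfrac{\pi}{2})$ and $(\arctan e^{x})'=e^{x}/(1+e^{2x})$ is bounded, the functions $w_k,w$ are uniformly bounded and the quantities above are their Dirichlet energies. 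I would prove convergence of these energies via almost-everywhere convergence of the integrands together with equi-integrability (Vitali), the only possible concentration of mass being at the moving point $p_1$ (and, more harmlessly, at the frozen points $p_2,\dots,p_Q$).

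First I would establish the pointwise convergence. Using Lemma \ref{Lemma: Explicit form for a} I write
\[
a_k=d_1\log|x-p_1^k|+\sum_{i=2}^{Q}d_i\log|x-p_i|+\tilde a_k,
\]
with $\tilde a_k$ harmonic and satisfying the boundary condition (\ref{Equation: Condition satisfied by tilde a}) relative to the pole $p_1^k$, and similarly for $a$. As $p_1^k\to p_1$ the Neumann data of $\tilde a_k$ converge to those of $\tilde a$, so $a_k\to a$ together with $\nabla a_k\to\nabla a$ locally uniformly on $\overline{D^2}\setminus\{p_1,\dots,p_Q\}$, hence almost everywhere. Consequently $w_k\to w$ and $\nabla w_k\to\nabla w$ a.e., and for every fixed $\rho>0$
\[
\int_{D^2\setminus\bigcup_iB_\rho(p_i)}f(a_k)\,|\nabla a_k|^2\ \longrightarrow\ \int_{D^2\setminus\bigcup_iB_\rho(p_i)}f(a)\,|\nabla a|^2\quad(k\to\infty).
\]
It then remains to show the uniform smallness of the energy carried near each singular point, i.e. $\displaystyle\lim_{\rho\to0}\limsup_{k\to\infty}\int_{B_\rho(p_i)\cap D^2}f(a_k)\,|\nabla a_k|^2=0$. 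For an interior point this is a computation on the model profile: writing $a_k=d_1\log r'+\psi_k$ with $r'=|x-p_1^k|$ and $\psi_k$ harmonic and uniformly bounded in $C^1(B_{2\rho})$, the elementary inequality $f(x+c)\le e^{2|c|}f(x)$ together with the fact that $f$ is even reduces the estimate to
\[
\int_{B_\rho}f(d_1\log r')\,\frac{d_1^2}{r'^2}\,dx=\pi|d_1|\,\frac{\rho^{2|d_1|}}{1+\rho^{2|d_1|}}\ \xrightarrow[\rho\to0]{}\ 0
\]
uniformly in $k$, the remaining terms (involving $\nabla\psi_k$) contributing $o(1)$ as $\rho\to0$. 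The fixed points $p_2,\dots,p_Q$ are handled in exactly the same way, their location being frozen.

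The remaining, and genuinely delicate, case is $p_1\in\partial D^2$, and this is precisely where the hypothesis $g_0\in H^{1/2}$ (rather than merely $W^{1,1}$) is needed: the main obstacle is that the normal derivative on $\partial D^2$ of the moving logarithm $d_1\log|x-p_1^k|$ concentrates at $p_1$, so that the harmonic correction $\tilde a_k$ need no longer be uniformly bounded near $p_1$ and the weight $f(a_k)$ by itself may fail to control it. I would recover the uniform smallness by even (Schwarz) reflection across $\partial D^2$: the Neumann condition turns the reflection into a harmonic continuation, the pole $p_1^k$ and its mirror image coalesce at $p_1$ into a single interior logarithmic singularity, and the boundary source $-ig_0^{-1}\partial_\theta g_0$ reflects to an interior source whose harmonic potential has finite Dirichlet energy \emph{because} $g_0\in H^{1/2}(\partial D^2,S^1)$ — the assumed parity of $d_1$ being exactly what guarantees that the reflected $S^1$-valued configuration is admissible. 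This finite energy is independent of $k$, hence equi-integrable near $p_1$, so the interior estimate of the previous paragraph applies verbatim to the doubled problem and yields the required uniform smallness. Combining the three contributions, letting first $k\to\infty$ and then $\rho\to0$, gives
\[
\lim_{k\to\infty}\int_{D^2}f(a_k)\,|\nabla a_k|^2=\int_{D^2}f(a)\,|\nabla a|^2,
\]
as asserted.
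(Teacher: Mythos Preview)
Your overall strategy---pointwise convergence plus uniform smallness of the energy near the singular points---is sound, and for interior singularities your argument is essentially correct (the uniform $C^1$ bound on $\psi_k$ near an interior $p_1$ follows from interior elliptic regularity once you observe that the Neumann data of $\tilde a_k$ converge in $H^{-1/2}$). The difficulty is entirely in the boundary case, and there your sketch has a genuine gap.

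When $p_1\in\partial D^2$ you correctly note that the correction $\tilde a_k$ is no longer uniformly bounded near $p_1$, and you propose to remedy this by Schwarz reflection. Reflection does give the right explicit singular part: the zero-Neumann fundamental solution is $\log|x-p|+\log|x-\bar p^{-1}|$ (this is precisely the paper's Claim~2), so the moving pole and its mirror image do coalesce at $p_1$. But the remaining harmonic piece---the one carrying the Neumann datum $-ig_0^{-1}\partial_\theta g_0$---does \emph{not} ``reflect to an interior source''; it stays a boundary contribution, and the $H^{1/2}$ hypothesis only puts it in $H^1(D^2)$, not in $C^1$ near $\partial D^2$. Your interior estimate relied on the comparison $f(d_1\log r'+\psi_k)\le e^{2\|\psi_k\|_\infty}f(d_1\log r')$, which requires an $L^\infty$ bound on $\psi_k$. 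With only $H^1$ control this fails, so the interior estimate does not apply verbatim to the doubled problem.

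The paper closes this gap differently. It decomposes $a=a_1+a_2+a_3$ where $a_1$ carries the singularities with \emph{zero} Neumann data (hence the explicit reflected formula), $a_2$ is smooth, and $a_3$ is harmonic with Neumann data arbitrarily small in $H^{-1/2}$ (this is where $g_0\in H^{1/2}$ enters). It then proves the pointwise bound $f(a_k)|\nabla a_k|^2\le C\bigl(e^{2a_3}+|\nabla(a_2+a_3)|^2\bigr)$, uniform in $k$, using $f(x)\le e^{2x}$ together with the explicit control of $e^{2a_1^k}|\nabla a_1^k|^2$ (not $f(a_1^k)|\nabla a_1^k|^2$). The crucial point is that $e^{2a_3}$ is integrable by the John--Nirenberg inequality, since $a_3\in H^1\subset BMO$ with small seminorm. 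Dominated convergence then finishes the proof. This exponential/BMO argument is the missing ingredient in your boundary case; without it, equi-integrability near $p_1\in\partial D^2$ is not established.
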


\begin{proof}
\
\\
\textbf{Claim 1:}
\begin{align*}
-ig_0^{-1}\partial_\theta g_0\in H^{-\frac{1}{2}}(\partial D^2).
\end{align*}
\begin{proof}[Proof of Claim 1]
Let
$$d:=\deg(g_0)$$
and let
$$\tilde{g_0}(e^{i\theta}):=g_0(e^{i\theta})e^{-id\theta}\quad \forall \theta\in [0,2\pi).$$
Then $\tilde{g_0}\in H^\frac{1}{2}$ and
$$\deg(\tilde{g_0})=0.$$
Therefore by Theorem 1 in \cite{BBM} there exists a function $\varphi_0\in H^\frac{1}{2}(\partial D^2)$ such that 
$$\tilde{g_0}=e^{i\varphi_0}.$$
Now we claim that
\begin{align}\label{Equation: Claim: derivative of lift}
\partial_\theta \varphi_0=-ie^{-i\varphi_0}\partial_\theta e^{i\varphi_0}.
\end{align}
To see this let $(\varphi_n)_{n\in \mathbb{N}}$ be a sequence in $C^\infty(\partial D^2)$ such that
\begin{align*}
\varphi_n\to\varphi_0\text{ in }H^\frac{1}{2}(\partial D^2).
\end{align*}
Then for any $n\in \mathbb{N}$
\begin{align*}
\partial_\theta \varphi_n=-ie^{-i\varphi_n}\partial_\theta e^{i\varphi_n}.
\end{align*}
By Lemma \ref{Lemma: H12 convergence for composition with Lipschitz} there holds
\begin{align*}
e^{i\varphi_n}\to e^{i\varphi_0}\text{ in }H^\frac{1}{2}(\partial D^2).
\end{align*}
Therefore
\begin{align*}
\partial_\theta e^{i\varphi_n}\to \partial_\theta e^{i\varphi_0} \text{ in }H^{-\frac{1}{2}}(\partial D^2)
\end{align*}
and so
\begin{align*}
e^{i\varphi_n}\partial_\theta e^{i\varphi_n}\to e^{-i\varphi_0}\partial_\theta e^{i\varphi_0}\text{ in }\mathcal{D}'(\partial D^2).
\end{align*}
On the other hand
\begin{align*}
\partial_\theta \varphi_n\to\partial_\theta\varphi_0\text{ in }H^{-\frac{1}{2}}(\partial D^2),
\end{align*}
therefore (\ref{Equation: Claim: derivative of lift}) follows.\\
Now we compute
\begin{align*}
e^{-i\varphi_0}\partial_\theta e^{i\varphi_0}=\tilde{ g_0}^{-1}\partial_\theta \tilde{ g_0}=g_0^{-1}e^{-id\theta}\partial_\theta(g_0 e^{id\theta})=g_0^{-1}\partial_\theta g_0+id.
\end{align*}
As $$e^{i\varphi_0}\partial_\theta e^{i\varphi_0}\in H^{-\frac{1}{2}}(\partial D^2)$$ by (\ref{Equation: Claim: derivative of lift}) and clearly $id\in H^{-\frac{1}{2}}(\partial D^2)$ we conclude that $$-g_0^{-1}\partial_\theta g_0\in H^{-\frac{1}{2}}(\partial D^2).$$
\end{proof}
Let $\delta>0$ (to be determined later) and let $h\in C^{\infty}(\partial D^2, S^1)$ such that
\begin{align*}
\int_{\partial D^2}h=-i\int_{\partial D^2}g_0^{-1}\partial_\theta g_0
\end{align*}
and
\begin{align*}
\lVert -ig_0^{-1}\partial_\theta g_0-h\rVert_{H^{-\frac{1}{2}}(\partial D^2)}<\delta.
\end{align*}
Let $a_1$, $a_2$, $a_3$ be zero-mean solutions of
\begin{align}\label{Equation: Cauchy problem solved by a1}
\begin{cases}
\Delta a_1=2\pi\sum_{i=1}^Q d_i\delta_{p_i}-\sum_{i=1}^Q d_i&\text{ in }\overline{D^2}\\[5mm]
\partial_{\nu}a_1=0&\text{ on }\partial D^2,
\end{cases}
\end{align}
\begin{align*}
\begin{cases}
\Delta a_2=\sum_{i=1}^Q d_i&\text{ in }D^2\\[5mm]
\partial_{\nu}a_2=h&\text{ on }\partial D^2,
\end{cases}
\end{align*}
\begin{align*}
\begin{cases}
\Delta a_3=0&\text{ in }D^2\\[5mm]
\partial_{\nu}a_3=-i g_0^{-1}\partial_\theta g_0-h&\text{ on }\partial D^2.
\end{cases}
\end{align*}
Then $a=a_1+a_2+a_3$.\\
Notice that $a_2$ is smooth, $a_3$ lies in $H^1(D^2)$ with
\begin{align*}
\lVert a_3\rVert_{H^1}\leq\delta
\end{align*}
and only $a_1$ depends on the positions and the degrees of the topological singularities.\\
For any $k\in \mathbb{N}$ let $a_1^k$, $a_2^k$, $a_3^k$ be defined analogously and observe that for any $k\in \mathbb{N}$ $a_2^k=a_2$ and $a_3^k=a_3$.\\

\textbf{Claim 2}:
\begin{align}\label{Equation: explicit expression for a_1}
a_1(x)=\sum_{i=1}^Q d_i\left(\log\lvert x-p_i\rvert +\log\lvert x-\overline{p_i}^{-1}\rvert-\frac{1}{2}\lvert x\rvert^2\right)
\end{align}
up to an additive constant, with the convention that if $p=0$
\begin{align*}
\log\lvert x-\overline{p}^{-1}\rvert\equiv0.
\end{align*}
\begin{proof}[Proof of Claim 2]
First we observe that by linearity it is enough to check the Claim for $Q=1$ and $d_1=1$. Let $p\in \overline{D^2}$ denote the only singularity of $a_1$.
Let $\tilde{a_1}$ denote the function defined by the right hand side of (\ref{Equation: explicit expression for a_1}).\\
If $p=0$ the Claim is clear. If $p\in \partial D^2$ then $\overline{p}^{-1}=p$ and
\begin{align*}
\tilde{a_1}(x)=2\log\lvert x-p\rvert-\frac{1}{2}.
\end{align*}
By Computation (\ref{Equation: Computation effect logarithm 2}) for any $\phi\in C^\infty(\overline{D^2})$ there holds
\begin{align*}
\int_{D^2}\nabla\phi\nabla \tilde{a_1}=\int_{D^2}\phi-2\pi\phi(p),
\end{align*}
then $\tilde{a_1}$ is a solution of (\ref{Equation: Cauchy problem solved by a1}) and thus it differs from $a_1$ at most by an additive constant.\\
Let's consider the case where $p\in D^2$ and $p\neq 0$.
It is clear that $\tilde{a_1}$ satisfies
\begin{align*}
\Delta \tilde{a_1}=2\pi \delta_p-1\text{ in }D^2.
\end{align*}
We still need to check that $\tilde{a_1}$ also satifies the Neumann boundary condition satisfied by $a_1$. Let
\begin{align*}
\tau: \mathbb{C}\smallsetminus\{1\}\to \mathbb{C},\quad z\mapsto\frac{1+z}{1-z}.
\end{align*}
Observe that $\tau$ restricts to a biholomorphic map from a neighbourhood of $\overline{D^2}\smallsetminus\{1\}$ to a neighbourhood of $\overline{\mathbb{H}}$, whose inverse is given by
\begin{align*}
\tau^{-1}: \mathbb{C}\smallsetminus\{-1\}\to \mathbb{C},\quad w\mapsto\frac{w-1}{w+1}.
\end{align*}
Notice that for any $p\in \mathbb{C}\smallsetminus\{-1\}$
\begin{align*}
\overline{p}^{-1}=\tau^{-1}\left(-\overline{\tau(p)}\right).
\end{align*}
Now set
\begin{align*}
F_p: \mathbb{C}\to\mathbb{C}, z\mapsto \log\lvert z-\tau(p)\rvert+\log\lvert z+\overline{\tau(p)}\rvert.
\end{align*}
For any $z\in \mathbb{C}\smallsetminus\{1\}$
\begin{align*}
F_p\circ \tau (z)=&\log\lvert 1+z-\tau(p)(1-z)\rvert+\log\lvert 1+z+\overline{\tau(p)}(1-z)\rvert-2\log\lvert 1-z\rvert\\
=&\log\left\lvert z+\frac{1-\tau(p)}{1+\tau(p)}\right\rvert+\log\lvert 1+\tau(p)\rvert+\log\left\lvert z+\frac{1+\overline{\tau(p)}}{1-\overline{\tau(p)}}\right\rvert+\log\lvert1-\overline{\tau(p)}\rvert-2\log\lvert 1-z\rvert\\
=&\tilde{a_1}(z)-\frac{1}{2}\lvert z\rvert^2+\log\lvert 1+\tau(p)\rvert+\log\lvert 1-\overline{\tau(p)}\rvert-2\log\lvert 1-z\rvert.
\end{align*}
Recall that
\begin{align*}
\partial_\nu \log\lvert 1-z\rvert=\frac{1}{2}\text{ on }\partial D^2\smallsetminus\{1\},
\end{align*}
as shown in (\ref{Equation: Computation effect logarithm 2}). Moreover
\begin{align*}
\partial_\nu (F_p\circ \tau) (z)=DF_p(\tau(z))\partial_\nu\tau (z)=0\text{ on }\partial D^2\smallsetminus\{1\}
\end{align*}
since $\partial_\nu\tau (z)$ is orthogonal to the imaginary axis, and thus $DF_p$ vanishes in that direction.
Therefore
\begin{align*}
\partial_\nu \left(\tilde{a_1}(z)-\frac{1}{2}\lvert z\rvert^2\right)=\partial_\nu (F_p\circ \tau)(z)+2\partial_\nu\log\lvert 1-x\rvert=1
\end{align*}
on $\partial D^2\smallsetminus\{1\}$.\\
Now
\begin{align*}
\partial_\nu \frac{1}{2}\lvert x\rvert^2=1\text{ on }\partial D^2,
\end{align*}
therefore
\begin{align*}
\partial_\nu \tilde{a_1}=0\text{ on }\partial D^2.
\end{align*}
\end{proof}
Claim 2 implies that
\begin{align*}
a_1(x)=&\sum_{i=1}^Q d_i\left(\log\lvert x-p_i\rvert-\fint_{ D^2}\log\lvert y-p_i\rvert dy\right)\\&+\sum_{i=1}^Q d_i\left(\log\lvert x-\overline{p_i}^{-1}\rvert-\fint_{ D^2}\log\lvert y-\overline{p_i}^{-1}\rvert dy\right)+\left(\frac{1}{2}\lvert x\rvert^2-\frac{\pi}{4}\right)\sum_{i=1}^Q d_i.
\end{align*}
Observe that the analogous result holds for $a_1^k$ for any $k\in \mathbb{N}$.\\

\textbf{Claim 3}: There exists a constant $C$ such that for any $k\in \mathbb{N}$
\begin{align}\label{Equation: Integrable bound}
f(a_k)\lvert \nabla a_k\rvert^2\leq C\left(e^{2a_3}+\lvert\nabla(a_2+a_3)\rvert^2\right)
\end{align}
\begin{proof}[Proof of Claim 3]
Let $k\in \mathbb{N}$. To simplify the notation, in the proof of Claim 3 we will set $p_1=p_1^k$.
Notice first that
\begin{align*}
f(a_k)\lvert\nabla a_k\rvert^2\leq 2\left( f(a_k)\lvert \nabla a_k^1\rvert^2+\lvert \nabla (a_k^2+a_k^3)\rvert^2\right),
\end{align*}
therefore it is enough to show that there exists a constant $C$, independent from $k$, such that
\begin{align*}
f(a_k)\lvert \nabla a_k^1\rvert^2\leq Ce^{2 a_3}.
\end{align*}
The key step will consist in proving the following estimate: for any  $i\in \{1,..., Q\}$, for any $x\in D^2$
\begin{align}\label{Equation: first estimate for products}
\prod_{j=1}^Q\left(\lvert x-\overline{p_j}^{-1}\rvert^{2 d_j}\operatorname{exp}\left(-\fint_{ \partial D^2}\log\lvert y-\overline{p_j}^{-1}\rvert^{2 d_j}dy\right)\right)\leq C\min\left(1, \lvert x-\overline{p_i}^{-1}\rvert^{2d_i}\right)
\end{align}
for some constant $C$ depending only on $Q$ and the degrees $d_1,..., d_Q$.\\
If $\overline{p_i}^{-1}\in B_2(0)$,
\begin{align*}
\left\lvert\fint_{\partial D^2}\log\lvert y-\overline{p_i}^{-1}\rvert dy\right\rvert \leq \frac{1}{2\pi}\int_{B_4(0)}\left\lvert\log\lvert y\rvert\right\rvert dy
\end{align*}
and therefore
\begin{align}\label{Equation: first estimate for products, estimate 1}
\lvert x-\overline{p_i}^{-1}\rvert^{2d_i}\operatorname{exp}\left(-\fint_{\partial D^2}\log\lvert y-\overline{p_i}^{-1}\rvert^{2d_i}dy\right) \frac{1}{\lvert x-\overline{p_i}^{-1}\rvert^{2d_i}}\leq C
\end{align}
for some constant $C$ independent from $p_i$, for any $x\in D^2$.\\
On the other hand if $\overline{p_i}^{-1}\notin B_2(0)$
\begin{align*}
\lvert x-\overline{p_i}^{-1}\rvert^{2 d_i}\geq 1
\end{align*}
and
\begin{align*}
\left\lvert \log \lvert x-\overline{p_i}^{-1}\rvert^{2 d_i}-\fint_{\partial D^2}\log \lvert y-\overline{p_i}^{-1}\rvert^{2 d_i} dy\right\rvert\leq 4\lvert d_i\rvert\sup_{y\in D^2}\frac{1}{\lvert y-\overline{p_i}^{-1}\rvert}\leq 4 \lvert d_i\rvert
\end{align*}
for any $x\in D^2$.
Thus in this case
\begin{align}\label{Equation: first estimate for products, estimate 2}
\operatorname{exp}\left(\log \lvert x-\overline{p_i}^{-1}\rvert^{2 d_i}-\fint_{\partial D^2}\log \lvert y-\overline{p_i}^{-1}\rvert^{2 d_i} dy\right)\leq e^{4\lvert d_i\rvert}
\end{align}
for any $x\in D^2$.\\
Combining (\ref{Equation: first estimate for products, estimate 1}) and (\ref{Equation: first estimate for products, estimate 2}) we obtain (\ref{Equation: first estimate for products}).\\
We also have for any $i\in \{1,..., Q\}$, for any $x\in D^2$
\begin{align*}
\prod_{j=1}^Q\left(\lvert x-p_j\rvert^{2d_j}\operatorname{exp}\left(-\fint_{ D^2}\log\lvert y-p_j\rvert^{2d_j} dy\right)\right)\leq C\min\left(1, \lvert x-p_i\rvert^{2d_i}\right)
\end{align*}
for some constant $C$ depending only on $Q$ and the degrees $d_1,..., d_Q$.\\
Now for any $x\in D^2$
\begin{align*}
\nabla a_k^1(x)=\sum_{i=1}^Q d_i\left(\frac{x-p_i}{\lvert x-p_i\rvert^2}+\frac{x-\overline{p_i}^{-1}}{\lvert x-\overline{p_i}^{-1}\rvert^2}-x\right),
\end{align*}
therefore we conclude that for any $x\in D^2$
\begin{align*}
e^{2a_1}\lvert\nabla a_k^1\rvert^2=\prod_{i=1}^Qe^{d_i\lvert x\rvert^2}\lvert x-p_i\rvert^{2d_i}\lvert x-\overline{p_i}^{-1}\rvert^{2d_i}\lvert \nabla a_k^1\rvert^2\leq C
\end{align*}
for some constant $C$ depending only on $Q$ and the degrees $d_1,..., d_Q$.\\
Then
\begin{align*}
f(a_k)\lvert\nabla a_k\rvert^2\leq e^{2a_k}\lvert \nabla a_k\rvert^2 \leq C\lVert e^{2a_2}\rVert_{L^\infty}e^{2a_3}.
\end{align*}
\end{proof}
We notice that the right hand side of (\ref{Equation: Integrable bound}) is integrable if $\delta$ is chosen to be sufficiently small. In fact $a_2$ and $a_3$ lie in $H^1(D^2)$, moreover since $a_3\in H^1(D^2)\subset BMO(D^2)$, by the John-Nirenberg Theorem (see Corollary 3.1.7 in \cite{Gra2})
\begin{align*}
\int_{D^2}e^{2a_3(x)}dx<\infty
\end{align*}
if $\delta$ is chosen sufficiently small (and thus the $BMO$-norm of $a_3$ is sufficiently small).\\

\textbf{Claim 4}:
\begin{align*}
f(a_k)\lvert \nabla a_k\rvert^2\to f(a)\lvert \nabla a\rvert^2\text{ a.e.}.
\end{align*}
\begin{proof}[Proof of Claim 4]
It is enough to check that
\begin{align}\label{Equation: Limit of log a e}
&\log\lvert x-p_1^k\rvert+\log\left\lvert x-\overline{p_1^k}^{-1}\right\rvert-\fint_{ D^2}\left(\log\lvert y-p_1^k\rvert +\log\left\lvert y-\overline{p_1^k}^{-1}\right\rvert \right)dy\\
\nonumber
&\to \log\left\lvert x-p_1^k\right\rvert+\log\left\lvert x-\overline{p_1}^{-1}\right\rvert-\fint_{ D^2}\left(\log\left\lvert y-p_1^k\right\rvert +\log\left\lvert y-\overline{p_1}^{-1}\right\rvert \right)dy\text{ a.e.}
\end{align}
and
\begin{align}\label{Equation: Limit of gradient of log a e}
\frac{x-p_1^k}{\left\lvert x-p_1^k\right\rvert^2}+ \frac{x-\overline{p_1^k}^{-1}}{\left\lvert x-\overline{p_1^k}^{-1} \right\rvert^2}\to \frac{x-p_1}{\left\lvert x-p_1\right\rvert^2}+\frac{x-\overline{p_1}^{-1}}{\left\lvert x-\overline{p_1}^{-1} \right\rvert^2}\text{ a.e.}
\end{align}
with the convention that if $p=0$ then
\begin{align*}
\log\left\lvert x-\overline{p}^{-1}\right\rvert\equiv0\text{ and }\frac{x-\overline{p}^{-1}}{\left\lvert x-\overline{p}^{-1} \right\rvert^2}\equiv0.
\end{align*}
When $p_1\neq 0$ both (\ref{Equation: Limit of log a e}) and (\ref{Equation: Limit of gradient of log a e}) are clear.\\
When $p_1=0$ then we can assume without loss of generality that $p_1^k\neq 0$ for any $k\in \mathbb{N}$. In order to show (\ref{Equation: Limit of log a e}) in this case we have to check that
\begin{align*}
\log\left\lvert x-\overline{p_1^k}^{-1}\right\rvert -\fint_{ D^2}\log\left\lvert y-\overline{p_1^k}^{-1}\right\rvert dy\to 0\text{ a.e..}
\end{align*}
In fact
\begin{align*}
\left\lvert\fint_{D^2}\left(\log\left\lvert x-\overline{p_1^k}^{-1}\right\rvert-\log\left\lvert y-\overline{p_1^k}^{-1}\right\rvert\right)dy\right\rvert\leq 2\sup_{z\in D^2}\frac{1}{\lvert z-\overline{p_k}^{-1}\rvert}
\end{align*}
and since $p_1^k\to 0$, the right hand side tends to zero.\\
In order to show (\ref{Equation: Limit of gradient of log a e}) when $p_1=0$ we have to check that
\begin{align*}
\frac{x-\overline{p_1^k}^{-1}}{\left\lvert x-\overline{p_1^k}^{-1} \right\rvert^2}\to 0\text{ a.e.},
\end{align*}
but this is clear, since $p_1^k\to 0$.
\end{proof}
From Claim 3 and Claim 4 we deduce that by Dominated Convergence
\begin{align*}
\lim_{k\to \infty}\int_{D^2}f(a_k)\lvert \nabla a_k\rvert^2=\int_{D^2}f(a)\lvert \nabla a\rvert^2.
\end{align*}
\end{proof}
\begin{rem}\label{Remark: d_i even for points on the boundary}
The proof of Lemma \ref{Lemma: Stability wrt points} (and in particular Claim 2) shows that when a topological singularity of degree $d$ approaches the boundary, in the limit it becomes a singularity of degree $2d$.
This justifies the fact that throughout this paper we require the degree of topological singularities lying on $\partial D^2$ to be even.
\end{rem}
\begin{oq}
Does the result of Lemma \ref{Lemma: Stability wrt points} remains true if we assume that $g_0$ lies in $W^{1,1}(\partial D^2)$? And is the renormalized Dirichlet Energy stable under perturbation of the boundary datum $g_0$ (in $W^{1,1}(\partial D^2)$ or $H^\frac{1}{2}(\partial D^2)$)?
\end{oq}

\section{Proof of Theorem I.1 and Theorem \ref{Proposition: A priori estimate on number of singularities}}
In this section we provide a proof of Theorem \ref{th-I.1} and Theorem \ref{Proposition: A priori estimate on number of singularities}.\\
Theorem \ref{Proposition: A priori estimate on number of singularities} follows directly from the a priori estimate on the number of topological singularities given by Lemma \ref{Lemma: A priori estimate on the number of singularities} and Corollary \ref{Corollary: Estimate on number of singularities with boundary also in H1/2}. The result of Theorem \ref{Proposition: A priori estimate on number of singularities} is then applied to prove Theorem \ref{th-I.1}.


\subsection{Proof of Theorem \ref{Proposition: A priori estimate on number of singularities}}
\begin{lem}[\textbf{A priori estimate on the number of singularities}]\label{Lemma: A priori estimate on the number of singularities}
Let $g_0\in W^{1,1}(\partial D^2, S^1)$.
Let $Q\in \mathbb{N}$ and for any $i\in \{1,...,Q\}$ let $p_i\in D^2$ and $d_i\in \mathbb{Z}$.
Assume that
\begin{align}\label{Equation: First assumption a priori estimate}
i\int_{\partial D^2}g_0^{-1}\partial_{\theta}g_0+2\pi\sum_{\substack{i=1}}^Qd_i=0.
\end{align}
Let $a\in W^{1,1}(D^2, \mathbb{R})$ and assume that for any $\phi\in C^\infty(\overline{D^2})$
\begin{align}\label{Equation: Second assumption a priori estimate}
\int_{D^2}\nabla a\nabla\phi =-i\int_{\partial D^2}\phi\, g_0^{-1}\partial_\theta g_0-2\pi\sum_{i=1}^Q d_i\phi(p_i).
\end{align}
Let
$$f:\mathbb{R}\to\mathbb{R},\quad x\mapsto \frac{e^{2x}}{(1+e^{2x})^2}.$$
Then
\begin{align}
\nonumber
\pi\sum_{\substack{i=1\\d_i>0}}^Q d_i\leq \frac{1}{2}\lVert\partial_\theta g_0\rVert_{L^1(\partial D^2)}+\int_{D^2}f(a)\lvert \nabla a \rvert^2 .
\end{align}
and thus
\begin{align*}
\sum_{i=1}^Q\lvert d_i\rvert\leq\frac{3}{2\pi}\lVert \partial_\theta g_0\rVert_{L^1}+\frac{2}{\pi}\int_{D^2}f(a)\lvert \nabla a\rvert^2.
\end{align*}
\end{lem}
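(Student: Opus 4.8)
The plan is to obtain both inequalities by testing the weak equation (\ref{Equation: Second assumption a priori estimate}) against one cleverly chosen bounded function of $a$, which converts the point masses carried by the positive $d_i$ directly into the weighted energy $\int_{D^2}f(a)|\nabla a|^2$ plus a boundary term controlled by $\lVert\partial_\theta g_0\rVert_{L^1}$. We may assume $\int_{D^2}f(a)|\nabla a|^2<\infty$, since otherwise there is nothing to prove. Throughout, set
\[
h_0:=-i\,g_0^{-1}\partial_\theta g_0 ,
\]
which is real-valued because $g_0$ takes values in $S^1$, with $|h_0|=|\partial_\theta g_0|$; the balance condition (\ref{Equation: First assumption a priori estimate}) then reads $\sum_{i=1}^Q d_i=\frac{1}{2\pi}\int_{\partial D^2}h_0$.

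The core of the argument is to insert into (\ref{Equation: Second assumption a priori estimate}) the function
\[
\tilde\phi:=\frac{1}{2\left(e^{2a}+1\right)} .
\]
This $\tilde\phi$ is bounded with values in $(0,\tfrac12)$ and satisfies $\nabla\tilde\phi=-f(a)\nabla a$, so that $\nabla\tilde\phi\in L^2(D^2)$ by the finiteness assumption. Since near a singularity $a(x)\sim d_i\log|x-p_i|$, the function $\tilde\phi$ extends continuously to $p_i$ with $\tilde\phi(p_i)=\tfrac12$ when $d_i>0$ (where $a\to-\infty$) and $\tilde\phi(p_i)=0$ when $d_i<0$ (where $a\to+\infty$). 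Substituting $\tilde\phi$ into (\ref{Equation: Second assumption a priori estimate}), the left-hand side is $\int_{D^2}\nabla a\cdot\nabla\tilde\phi=-\int_{D^2}f(a)|\nabla a|^2$, while the right-hand side equals $\int_{\partial D^2}\tilde\phi\,h_0-2\pi\sum_i d_i\tilde\phi(p_i)=\int_{\partial D^2}\tilde\phi\,h_0-\pi\sum_{d_i>0}d_i$. Rearranging yields the exact identity
\[
\pi\sum_{d_i>0}d_i=\int_{D^2}f(a)|\nabla a|^2+\int_{\partial D^2}\tilde\phi\,h_0 ,
\]
and since $0<\tilde\phi<\tfrac12$ and $|h_0|=|\partial_\theta g_0|$, the boundary term is bounded by $\tfrac12\lVert\partial_\theta g_0\rVert_{L^1(\partial D^2)}$, giving the first asserted inequality.

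For the second inequality I would argue purely algebraically. Writing $\sum_i|d_i|=2\sum_{d_i>0}d_i-\sum_i d_i$ and using the balance identity $\sum_i d_i=\frac{1}{2\pi}\int_{\partial D^2}h_0\ge-\frac{1}{2\pi}\lVert\partial_\theta g_0\rVert_{L^1}$, one combines this with the first inequality in the form $\sum_{d_i>0}d_i\le\frac1\pi\int_{D^2}f(a)|\nabla a|^2+\frac{1}{2\pi}\lVert\partial_\theta g_0\rVert_{L^1}$ to obtain exactly
\[
\sum_{i=1}^Q|d_i|\le\frac{3}{2\pi}\lVert\partial_\theta g_0\rVert_{L^1}+\frac{2}{\pi}\int_{D^2}f(a)|\nabla a|^2 .
\]

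The main obstacle is the rigorous justification of the substitution, since $\tilde\phi$ is not in $C^\infty(\overline{D^2})$: the function $a$ has logarithmic singularities at the $p_i$ and only a $W^{1,1}$ trace, so (\ref{Equation: Second assumption a priori estimate}), stated for smooth test functions, must first be extended to the bounded continuous function $\tilde\phi$. The delicate point is that the right-hand side records the point evaluations $\tilde\phi(p_i)$, so these must survive the limiting procedure. I expect to handle this by excision: remove small disks $B_\rho(p_i)$, integrate by parts on $D^2\smallsetminus\bigcup_i B_\rho(p_i)$ where $a$ is harmonic, and let $\rho\to0$. The flux $\int_{\partial B_\rho(p_i)}\partial_\nu a\to2\pi d_i$ while $\tilde\phi\to\tilde\phi(p_i)$ uniformly on the shrinking circles, so each circle contributes $2\pi d_i\tilde\phi(p_i)$ in the limit, the cross terms vanishing because $\int_{\partial B_\rho(p_i)}|\partial_\nu a|$ stays bounded; the interior integral converges since $f(a)|\nabla a|^2\in L^1(D^2)$ excludes concentration. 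This, together with checking that the boundary term produced by the integration by parts is indeed $\int_{\partial D^2}\tilde\phi\,h_0$, is where essentially all of the technical work lies.
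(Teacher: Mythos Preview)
Your approach is correct and, in fact, more direct than the paper's. The paper proceeds via the co-area formula: for a.e.\ level $t$ it derives the flux identity
\[
\int_{a^{-1}(t)}|\nabla a|=\int_{a^{-1}(t)}\partial_\nu a=i\!\int_{\partial D^2\cap\{a<t\}}\!g_0^{-1}\partial_\theta g_0+2\pi\!\!\sum_{d_i>0}d_i,
\]
then multiplies by $f(t)$, integrates in $t$, and uses $\int_{\R}f(t)\,dt=\tfrac12$ together with the co-area formula $\int_{D^2}f(a)|\nabla a|^2=\int_{\R}f(t)\int_{a^{-1}(t)}|\nabla a|\,dt$. Your choice $\tilde\phi=F(a)$ with $F'=-f$, $F(-\infty)=\tfrac12$, $F(+\infty)=0$ is exactly the antiderivative that collapses this layer-by-layer computation into a single integration by parts; one can check via Fubini that the paper's boundary term $\int_{\R}f(t)\int_{\partial D^2\cap\{a<t\}}h_0\,dt$ equals your $\int_{\partial D^2}\tilde\phi\,h_0$. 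So the two arguments are equivalent, but yours avoids invoking the $W^{1,1}$ co-area formula and even yields an \emph{identity} rather than only an inequality.

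The technical obstacle you flag---justifying the boundary term for general $g_0\in W^{1,1}(\partial D^2)$---is precisely the one the paper confronts. Its solution is to first treat smooth $g_0$, and for general $g_0$ to run the argument on $D^2_r$ with $r<1$ (where $a$ is smooth up to $\partial D^2_r$) and prove that $\tfrac1r\,\partial_\nu a\big|_{\partial D^2_r}\circ\iota_r\to -i g_0^{-1}\partial_\theta g_0$ in $L^1(\partial D^2)$ as $r\to1^-$, using the decomposition $a=\tilde a+\Phi$ from Lemma~\ref{Lemma: Explicit form for a} and an identity for the radial derivative of the harmonic part via the Hilbert transform. This device transplants verbatim into your framework and would complete your excision argument at $\partial D^2$; your treatment of the interior circles $\partial B_\rho(p_i)$ is already correct as stated. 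The algebraic derivation of the second inequality is identical to the paper's.
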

\begin{proof}
Assume first that $g_0\in C^\infty(\partial D^2, S^1)$.\\
For any $t\in \mathbb{R}$ let
\begin{align*}
A_t:=\{x\in D^2, a(x)<t\}.
\end{align*}

\textbf{Claim 1:} for a.e. $t\in \mathbb{R}$
\begin{align*}
\int_{a^{-1}(t)}\partial_\nu a=i\int_{\partial D^2\cap A_t}g_0^{-1}\partial_\theta g_0+2\pi\sum_{\substack{i=1\\d_i>0}}^Q d_i,
\end{align*}
where $\nu$ is the outer normal vector of the set $A_t$.
\begin{proof}[Proof of Claim 1] Since $a\in C^\infty(\overline{D^2}\smallsetminus\{p_1,...,p_Q\})$, for a.e. $t\in \mathbb{R}$ the set $A_t$ is an open subset of $D^2$ such that $\partial A_t$ is piecewise smooth and does not contain any topological singularity.\\
As $a$ solves
\begin{align*}
\Delta a=2\pi\sum_{i=1}^Q d_i\text{ in }D^2,
\end{align*}
for any such $t$ there holds
\begin{align}\label{Equation: Divergence theorem on sublevel sets}
\int_{\partial A_t}\partial_{\nu}a=2\pi\sum_{\substack{i=1\\p_i\in A_t}}^Q d_i.
\end{align}
Now
\begin{align*}
\partial A_t =a^{-1}(t)\cup (\partial D^2\cap A_t)
\end{align*}
and
\begin{align*}
\partial_\nu a=-ig_0^{-1}\partial_\theta g_0\text{ on }\partial D^2.
\end{align*}
Moreover for any $t\in \mathbb{R}$ a point $p_i$ lies in $A_t$ if and only if $d_i>0$, since
\begin{align*}
a(x)=d_i \log\lvert x-p_i\rvert+\mathscr{O}(1)
\end{align*}
in a neighbourhood of $p_i$ (see Lemma \ref{Lemma: Explicit form for a}).
Thus (\ref{Equation: Divergence theorem on sublevel sets}) implies
\begin{align*}
\int_{a^{-1}(t)}\partial_\nu a=i\int_{\partial D^2\cap A_t}g_0^{-1}\partial_\theta g_0+2\pi\sum_{\substack{i=1\\d_i>0}}^Q d_i.
\end{align*}
\end{proof}
Since the derivative of $a$ vanishes along $a^{-1}(t)$, on $a^{-1}(t)$ there holds
$$\partial_{\nu}a=\lvert \nabla a\rvert.$$
Therefore Claim 1 implies that for a.e. $t\in \mathbb{R}$
\begin{align}\label{Equation: Estimate on number of positive singularities, on a level set}
2\pi\sum_{\substack{i=1\\ d_i>0\\p_i\in D^2}}^Q d_i\leq \lVert \partial_\theta g_0\rVert_{L^1(\partial D^2)}+\int_{a^{-1}(t)}\partial_{\nu}a.
\end{align}
Now since $a \in W^{1,1}$, by Theorem 11 in \cite{Hajlasz} there exists a representative of $a$ for which the co-area formula holds. For such a representative we have
\begin{align}\label{Equation: Coarea formula to bound number of singularities, general application, 2}
\int_{D^2}f(a)\lvert \nabla a \rvert^2 dx&=\int_{\mathbb{R}}\left(\int_{a^{-1}(t)}f(a(x))\lvert\nabla a(x)\rvert d\mathscr{H}^1(x)\right)dt\\
\nonumber
&=\int_{\mathbb{R}}f(t)\left(\int_{{a}^{-1}(t)}\partial_\nu a(x) d\mathscr{H}^1(x) \right)dt.
\end{align}
Observe that
\begin{align}
\nonumber
\int_{\mathbb{R}}f(t)dt=\int_{\mathbb{R}}\frac{e^{2t}}{(1+e^{2t})^2}dt=\frac{1}{2}\int_0^\infty\frac{1}{(1+x)^2}dx=\frac{1}{2}.
\end{align}
Therefore multiplying both sides of (\ref{Equation: Estimate on number of positive singularities, on a level set}) by $f(t)$ and integrating on $\mathbb{R}$ we obtain
$$\pi\sum_{\substack{i=1\\ d_i>0}}^Q d_i\leq \frac{1}{2}\lVert\partial_\theta g_0\rVert_{L^1(\partial D^2)}+\int_{\mathbb{R}}f(t)\left(\int_{a^{-1}(t)}\partial_{\nu}a\right) dt.$$
Thus by (\ref{Equation: Coarea formula to bound number of singularities, general application, 2})
$$\pi\sum_{\substack{i=1\\ d_i>0}}^Q d_i\leq \frac{1}{2}\lVert\partial_\theta g_0\rVert_{L^1(\partial D^2)}+\int_{D^2}f(a)\lvert \nabla a \rvert^2 dx$$
and combining (\ref{Equation: First assumption a priori estimate}) and (\ref{Equation: Coarea formula to bound number of singularities, general application, 2}) we obtain
\begin{align*}
\sum_{i=1}^Q\lvert d_i\rvert=& 2\sum_{\substack{i=1\\ d_i>0}}^Qd_i-\sum_{i=1}^Q d_i\leq  \frac{2}{\pi}\left(\frac{1}{2}\lVert\partial_\theta g_0\rVert_{L^1(\partial D^2)}+\int_{D^2}f(a)\lvert \nabla a \rvert^2 dx\right)+\frac{1}{2\pi}\lVert \partial_\theta g_0\rVert_{L^1}\\
=&\frac{3}{2\pi}\lVert \partial_\theta g_0\rVert_{L^1}+\frac{2}{\pi}\int_{D^2}f(a)\lvert \nabla a\rvert^2.
\end{align*}
This concludes the proof under the assumption that $g_0\in C^\infty(\partial D^2, S^1)$.\\
Next consider the case where $g_0$ is a generic element of $W^{1,1}(\partial D^2, S^1)$. Let
$$R:=\sup_{i\in \{1,...,Q\}}\lvert p_i\rvert<1.$$
Since $a$ is smooth in $D^2\smallsetminus\{p_1,...,p_Q\}$, arguing as above we see that for any $r\in (R, 1)$, for any $t\in \mathbb{R}$
\begin{align}\label{Equation: Explicit form for integral of gradient on level set, inside}
\int_{a^{-1}(t)\cap D^2_r}\lvert \nabla a\rvert=-\int_{a^{-1}(t)\cap D^2_r}\partial_{\nu}a =\int_{\partial D_r^2\cap A_t}\partial_\nu a+2\pi\sum_{\substack{i=1\\ d_i>0}}^Q d_i.
\end{align}
The following Claim implies that taking the limit $r\to 1^-$ in (\ref{Equation: Explicit form for integral of gradient on level set, inside}) we recover estimate (\ref{Equation: Estimate on number of positive singularities, on a level set}), therefore we can conclude as in the previous case.\\

\textbf{Claim 2:}
\begin{align*}
\lim_{r\to 1^-}\left\lvert\int_{\partial D_r^2\cap A_t}\partial_\nu a\right\rvert\leq \lVert \partial_\theta g\rVert_{L^1(\partial D^2)}.
\end{align*}
\begin{proof}[Proof of Claim 2] For any $r\in (R,1)$ let
\begin{align*}
\iota_r: \partial D^2\to \partial D^2_r,\quad x\mapsto rx.
\end{align*}
To prove the Claim it is enough to show that
\begin{align*}
\frac{1}{r}\partial_\nu a\big\vert_{\partial D^2_r}\circ\iota_r\to -ig_0^{-1}\partial_\theta g_0\text{ in }L^1(\partial D^2)
\end{align*}
as $r\to 1^-$ (where $\partial_\nu$ denotes the outer normal derivative on $\partial D^2_r$).
To see this let's write
\begin{align*}
a=\tilde{a}+\Phi
\end{align*}
as in Lemma \ref{Lemma: Explicit form for a}. Then
\begin{align*}
\partial_{\nu}\Phi\vert_{\partial D^2_r}\circ \iota_r\to \partial_{\nu}\Phi\vert_{\partial D^2}\text{ in }L^1(\partial D^2)
\end{align*}
as $r\to 1^-$.
Moreover since $\tilde{a}$ is harmonic we have, using polar coordinates,
\begin{align*}
\partial_{r} \tilde{a}(r,\theta)=\tilde{a}\vert_{\partial D^2}\ast \partial_r P_r(\theta)=\frac{1}{r}\tilde{a}\vert_{\partial D^2}\ast H\partial_\theta P_r(\theta)=\frac{1}{ r} H\left(\partial_\theta\tilde{a}\vert_{\partial D^2}\right)\ast P_r(\theta).
\end{align*}
where $P_r$ denotes the Poisson kernel and $H$ the Hilbert transform, and we made use of the identity
\begin{align*}
H\partial_\theta P_r(\theta)=r\partial_r P_r(\theta).
\end{align*}
Next we claim that
\begin{align}\label{Equation: Identifying Hpartialtheta tildea}
H\left(\partial_\theta\tilde{a}\vert_{\partial D^2}\right)=-ig_0^{-1}\partial_\theta g_0-\partial_\nu \Phi\vert_{\partial D^2}.
\end{align}
To see this let $\phi\in C^\infty(\partial D^2, \mathbb{R})$ and denote by $\tilde{\phi}$ its harmonic extension in $D^2$. Then
\begin{align*}
\langle H\partial_\theta \tilde{a},\phi\rangle=\int_{\partial D^2}\tilde{a}H\partial_\theta \phi=\int_{\partial D^2}\tilde{a}\partial_\nu \phi=\int_{D^2}\nabla \tilde{a}\nabla \tilde{\phi}=-\int_{\partial D^2}\phi \left(ig_0^{-1}\partial_\theta g_0+\partial_\nu \Phi\big\vert_{\partial D^2}\right).
\end{align*}
 In the last step we made use of assumption (\ref{Equation: Second assumption a priori estimate}) and Lemma \ref{Lemma: Explicit form for a}.
As $\phi$ was arbitrary we conclude that (\ref{Equation: Identifying Hpartialtheta tildea}) holds true.\\
Then in particular
\begin{align*}
\partial_\nu \tilde{a}\big\vert_{\partial D^2_r}\circ\iota_r=-\frac{1}{r}(ig_0\partial_\theta g_0+\partial_\nu \Phi)\ast P_r\to -ig_0\partial_\theta g_0-\partial_\nu \Phi\text{ in }L^1(\partial D^2)
\end{align*}
as $r\to 1^-$,
since $(P_r)_{r\in (0,1)}$ is a family of approximated identities.
\end{proof}
\end{proof}
From the previous result and Lemma \ref{Lemma: Stability wrt points} we deduce that the same estimate holds if we allow singular points to lie on $\partial D^2$ provided that $g_0\in W^{1,1}\cap H^\frac{1}{2}(\partial D^2)$:
\begin{cor}\label{Corollary: Estimate on number of singularities with boundary also in H1/2}
Let $g_0\in W^{1,1}\cap H^\frac{1}{2}(\partial D^2)$. Let $Q\in \mathbb{N}$ and for any $i\in \{1,...,Q\}$ let $p_i\in \overline{D^2}$ and $d_i\in \mathbb{Z}$.
Assume that
\begin{align}
\nonumber
i\int_{\partial D^2}g_0^{-1}\partial_{\theta}g_0+2\pi\sum_{\substack{i=1\\ p_i\in D^2}}^Qd_i+\pi\sum_{\substack{i=1\\ p_i\in \partial D^2}}^Q d_i=0.
\end{align}
For any $i\in \{1,...,Q\}$ assume that whenever $p_i\in \partial D^2$ $d_i$ is even.\\
Let $a\in W^{1,1}(D^2, \mathbb{R})$ and assume that for any $\phi\in C^\infty(\overline{D^2})$
\begin{align*}
\int_{D^2}\nabla a\nabla\phi =-i\int_{S^1}\phi g_0^{-1}\partial_\theta g_0-2\pi\sum_{\substack{i=1\\ p_i\in D^2}}^Q d_i\phi(p_i)-\pi\sum_{\substack{i=1\\ p_i\in \partial D^2}}^Q d_i\phi(p_i).
\end{align*}
Then
\begin{align}\label{Equation: Estimate number of topological singularities W11H12}
\pi\sum_{\substack{i=1\\ d_i>0\\p_i\in D^2}}^Q d_i+\frac{1}{2}\pi\sum_{\substack{i=1\\ d_i>0\\ p_i\in \partial D^2}}^Q d_i\leq \frac{1}{2}\lVert\partial_\theta g_0\rVert_{L^1(\partial D^2)}+\int_{D^2}f(a)\lvert \nabla a \rvert^2 dx
\end{align}
and thus
\begin{align}\label{Equation: Estimate number of topological singularities W11H12,2}
\sum_{i=1}^Q\lvert d_i\rvert\leq\frac{3}{\pi}\lVert \partial_\theta g_0\rVert_{L^1}+\frac{4}{\pi}\int_{D^2}f(a)\lvert \nabla a\rvert^2.
\end{align}
\end{cor}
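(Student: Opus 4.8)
The plan is to reduce the statement to the purely interior case already settled in Lemma \ref{Lemma: A priori estimate on the number of singularities} by displacing each boundary singularity slightly into the disc, and then to transport the estimate back to the original configuration through the stability result of Lemma \ref{Lemma: Stability wrt points}. The mechanism that makes the constants come out correctly is the degree--doubling phenomenon recorded in Remark \ref{Remark: d_i even for points on the boundary}: an interior singularity of degree $m$ becomes, as it reaches $\partial D^2$, a boundary singularity of degree $2m$. Consequently a boundary singularity $p_i\in\partial D^2$ of even degree $d_i$ must be approximated by an interior point carrying degree $m_i:=d_i/2$, and it is precisely this halving that produces the weight $\tfrac12\pi d_i$ on the left-hand side of (\ref{Equation: Estimate number of topological singularities W11H12}).

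First I would set up the approximating configurations. For each $k$ I keep the genuine interior points $p_i\in D^2$ together with their degrees $d_i$, and replace each boundary point $p_i\in\partial D^2$ (of even degree $d_i=2m_i$) by an interior point $p_i^k\to p_i$ of degree $m_i$. Let $a_k$ be the zero--average solution of the associated Neumann problem with datum $-i\,g_0^{-1}\partial_\theta g_0$. Its solvability (compatibility) condition reads
\[
i\int_{\partial D^2}g_0^{-1}\partial_\theta g_0+2\pi\Big(\sum_{p_i\in D^2}d_i+\sum_{p_i\in\partial D^2}m_i\Big)=0,
\]
and since $2\pi m_i=\pi d_i$ this is exactly the hypothesis imposed in the statement. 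Thus each $a_k$ satisfies the assumptions of Lemma \ref{Lemma: A priori estimate on the number of singularities} (here the hypothesis $g_0\in W^{1,1}$ is what is needed), which gives
\[
\pi\sum_{\substack{d_i>0\\ p_i\in D^2}}d_i+\pi\sum_{\substack{m_i>0\\ p_i\in\partial D^2}}m_i\ \le\ \frac12\lVert\partial_\theta g_0\rVert_{L^1(\partial D^2)}+\int_{D^2}f(a_k)\lvert\nabla a_k\rvert^2 .
\]
Using $\pi m_i=\tfrac12\pi d_i$, the left-hand side is precisely the left-hand side of (\ref{Equation: Estimate number of topological singularities W11H12}).

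Next I would pass to the limit $k\to\infty$; this is where the hypothesis $g_0\in H^{1/2}$ enters. Applying Lemma \ref{Lemma: Stability wrt points} to each of the finitely many boundary points (or, more directly, through the obvious extension of its proof to several simultaneously moving singularities, the dominated--convergence argument being unchanged) yields
\[
\int_{D^2}f(a_k)\lvert\nabla a_k\rvert^2\ \longrightarrow\ \int_{D^2}f(a)\lvert\nabla a\rvert^2,
\]
where $a$ is the solution attached to the \emph{original} configuration, in which each $p_i\in\partial D^2$ carries degree $2m_i=d_i$ by the degree--doubling of Remark \ref{Remark: d_i even for points on the boundary}. Combining the two previous displays establishes (\ref{Equation: Estimate number of topological singularities W11H12}).

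Finally, to obtain (\ref{Equation: Estimate number of topological singularities W11H12,2}) I would combine the positive--degree estimate just proved with the compatibility condition. Splitting the degrees into the four sums $S^{\pm}_{\mathrm{int}}$ and $S^{\pm}_{\mathrm{bdry}}$ of $\pm$ contributions over interior and boundary points, the compatibility identity reads $2(S^{+}_{\mathrm{int}}-S^{-}_{\mathrm{int}})+(S^{+}_{\mathrm{bdry}}-S^{-}_{\mathrm{bdry}})=\tfrac1\pi\,(-i\int_{\partial D^2}g_0^{-1}\partial_\theta g_0)$, whose right-hand side is bounded in absolute value by $\tfrac1\pi\lVert\partial_\theta g_0\rVert_{L^1}$ because $\lvert g_0\rvert=1$. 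This lets one control the negative part $S^{-}_{\mathrm{int}}+S^{-}_{\mathrm{bdry}}$ by the positive part plus this boundary term, so that $\sum_i\lvert d_i\rvert\le 3S^{+}_{\mathrm{int}}+2S^{+}_{\mathrm{bdry}}+\tfrac1\pi\lVert\partial_\theta g_0\rVert_{L^1}$; bounding $3S^{+}_{\mathrm{int}}+2S^{+}_{\mathrm{bdry}}\le 4\big(S^{+}_{\mathrm{int}}+\tfrac12 S^{+}_{\mathrm{bdry}}\big)$ and inserting (\ref{Equation: Estimate number of topological singularities W11H12}) produces exactly $\tfrac{3}{\pi}\lVert\partial_\theta g_0\rVert_{L^1}+\tfrac{4}{\pi}\int_{D^2}f(a)\lvert\nabla a\rvert^2$. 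The main obstacle is the bookkeeping of degrees at the boundary: one must assign the interior approximants of a boundary singularity \emph{half} its degree, and verify that the stability lemma returns, in the limit, the boundary configuration of doubled degree rather than an interior one, so that both the compatibility condition and the weight $\tfrac12\pi d_i$ are reproduced; once this correspondence is pinned down, the remainder is the elementary algebra above.
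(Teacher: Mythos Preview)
Your proposal is correct and follows essentially the same approach as the paper: perturb each boundary singularity of even degree $d_i$ into the interior with halved degree $m_i=d_i/2$, apply Lemma~\ref{Lemma: A priori estimate on the number of singularities} to the resulting purely interior configuration, and pass to the limit via Lemma~\ref{Lemma: Stability wrt points}. Your derivation of (\ref{Equation: Estimate number of topological singularities W11H12,2}) from (\ref{Equation: Estimate number of topological singularities W11H12}) is more explicit than the paper's (which simply refers back to the analogous step in Lemma~\ref{Lemma: A priori estimate on the number of singularities}), and the algebra is correct.
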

\begin{proof}
Assume that $p_1,...,p_{Q'}$ lie on $\partial D^2$ while $p_{Q'+1},..., p_Q$ lie on $D^2$.
For any $i\in \{1,..., Q'\}$ let $\tilde{d_i}=\frac{d_i}{2}$.
Let $\varepsilon>0$.
According to Lemma \ref{Lemma: Stability wrt points}, for any $i\in \{1,...,Q'\}$ we can choose a point $p_i^\varepsilon\in D^2$ such that the corresponding function $a_\varepsilon$ (with degrees $\tilde{d_1},...,\tilde{d_{Q'}},d_{{Q'}+1},...,d_Q$ and same outer normal derivative as $a$) satisfies
\begin{align*}
\left\lvert\int_{D^2}f(a_\varepsilon)\lvert\nabla a_\varepsilon\rvert^2-\int_{D^2}f(a)\lvert\nabla a\rvert^2\right\rvert<\varepsilon.
\end{align*}
Now since all all the topological singularities of $a_\varepsilon$ lie in $D^2$, Lemma \ref{Lemma: A priori estimate on the number of singularities} implies
\begin{align*}
\pi\sum_{\substack{i=1\\ d_i>0\\p_i\in D^2}}^Q d_i+\frac{1}{2}\pi\sum_{\substack{i=1\\ d_i>0\\ p_i\in \partial D^2}}^Q d_i=&\pi\sum_{\substack{i=Q'+1}}^Q d_i+\pi\sum_{\substack{i=1}}^{Q'} \tilde{d}_i\\
\leq& \frac{1}{2}\lVert\partial_\theta g_0\rVert_{L^1(\partial D^2)}+\int_{D^2}f(a_\varepsilon)\lvert \nabla a_\varepsilon \rvert^2 dx\\ \leq& \frac{1}{2}\lVert\partial_\theta g_0\rVert_{L^1(\partial D^2)}+\int_{D^2}f(a)\lvert \nabla a \rvert^2 dx+\varepsilon.
\end{align*}
Letting $\varepsilon$ tend to $0$ we obtain (\ref{Equation: Estimate number of topological singularities W11H12}), from which (\ref{Equation: Estimate number of topological singularities W11H12,2}) can be decuced as in the proof of Lemma \ref{Lemma: A priori estimate on the number of singularities}.
\end{proof}
\begin{oq}
Although we assume $g_0$ to lie in $H^\frac{1}{2}(\partial D^2)$ in Corollary \ref{Corollary: Estimate on number of singularities with boundary also in H1/2}, the $H^\frac{1}{2}$-norm of $g_0$ does not appear on the right hand side of estimate (\ref{Equation: Estimate number of topological singularities W11H12}). It is natural to wonder if the result remains true if we only assume $g_0$ to lie in $W^{1,1}(\partial D^2)$, or if we could substitute the $W^{1,1}$-norm with the $H^\frac{1}{2}$-norm of $g_0$ in (\ref{Equation: Estimate number of topological singularities W11H12}) (assuming only $g_0\in H^\frac{1}{2}(\partial D^2)$).
\end{oq}

From Lemma \ref{Lemma: A priori estimate on the number of singularities} we deduce also the following estimate.
\begin{lem}\label{Lemma: Estimate from estimate on number of topological singularities}
Let $a$ be as in Lemma \ref{Lemma: A priori estimate on the number of singularities} or as in Corollary \ref{Corollary: Estimate on number of singularities with boundary also in H1/2}. There exists a constant $C$ independent from $a$ such that
\begin{align}
\lVert \nabla a\rVert_{L^{2,\infty}(D^2)}\leq C\left(\lVert g_0\rVert_{W^{1,1}(\partial D^2)}+\lVert \nabla u_g\rVert_{L^2(D^2)}\right)
\end{align}
\end{lem}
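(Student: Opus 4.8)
The plan is to split $a$ into a singular and a harmonic part, bound each in $L^{2,\infty}(D^2)$ separately, and then close the loop by feeding the a priori count of singularities back into these bounds. Using the decomposition of Lemma \ref{Lemma: Explicit form for a} I would write $a=\tilde a+\Phi$ with $\Phi(x)=\sum_{i=1}^Q d_i\log\lvert x-p_i\rvert$ and $\tilde a$ harmonic in $D^2$, so that $\nabla a=\nabla\Phi+\nabla\tilde a$ and it suffices to estimate the two summands. I first treat the interior case ($p_i\in D^2$); the boundary case is recovered at the end through the stability Lemma \ref{Lemma: Stability wrt points}.

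For the singular part I would observe that $\nabla\Phi(x)=\sum_i d_i\,(x-p_i)/\lvert x-p_i\rvert^2=K*\nu$, where $K(x)=x/\lvert x\rvert^2=\nabla\log\lvert x\rvert$ and $\nu=\sum_i d_i\,\delta_{p_i}$ has total variation $\lvert\nu\rvert(\overline{D^2})=\sum_i\lvert d_i\rvert$. Since $K\in L^{2,\infty}(\mathbb{R}^2)$ with a universal quasi-norm, and convolution of a fixed $L^{2,\infty}$ kernel against a finite measure is bounded on $L^{2,\infty}$ (weak Young inequality $L^{2,\infty}*L^1\to L^{2,\infty}$, equivalently Minkowski's integral inequality applied to the translates $K(\cdot-p_i)$), I obtain $\lVert\nabla\Phi\rVert_{L^{2,\infty}(D^2)}\le C\sum_i\lvert d_i\rvert$.

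For the harmonic part, $\tilde a$ solves the Neumann problem with boundary flux $\beta$ written explicitly in Lemma \ref{Lemma: Explicit form for a}, namely $\beta=-ig_0^{-1}\partial_\theta g_0-\sum_i d_i\,\partial_\nu\log\lvert\cdot-p_i\rvert$ on $\partial D^2$. The harmonic extension of an $L^1$ Neumann datum has gradient in weak $L^2$: this is exactly the content of Lemma \ref{Lemma: continuous extension from W11 to W 12infty} (the Neumann kernel $\nabla_x N(x,y)\sim\lvert x-y\rvert^{-1}$ maps $L^1(\partial D^2)\to L^{2,\infty}(D^2)$ boundedly), so that $\lVert\nabla\tilde a\rVert_{L^{2,\infty}(D^2)}\le C\lVert\beta\rVert_{L^1(\partial D^2)}$. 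Since $\lvert g_0\rvert\equiv1$ we have $\lVert -ig_0^{-1}\partial_\theta g_0\rVert_{L^1}=\lVert\partial_\theta g_0\rVert_{L^1}$, while for $p_i\in D^2$ one computes $\partial_\nu\log\lvert x-p_i\rvert=(1-p_i\cdot x)/\lvert x-p_i\rvert^2>0$ on $\partial D^2$, whence $\int_{\partial D^2}\lvert\partial_\nu\log\lvert x-p_i\rvert\rvert\,d\sigma=2\pi$. Thus $\lVert\beta\rVert_{L^1(\partial D^2)}\le\lVert\partial_\theta g_0\rVert_{L^1}+2\pi\sum_i\lvert d_i\rvert$ and therefore $\lVert\nabla\tilde a\rVert_{L^{2,\infty}}\le C\big(\lVert\partial_\theta g_0\rVert_{L^1}+\sum_i\lvert d_i\rvert\big)$.

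It then remains to absorb $\sum_i\lvert d_i\rvert$. By Lemma \ref{Lemma: A priori estimate on the number of singularities} (resp. Corollary \ref{Corollary: Estimate on number of singularities with boundary also in H1/2}) one has $\sum_i\lvert d_i\rvert\le C\big(\lVert\partial_\theta g_0\rVert_{L^1(\partial D^2)}+\int_{D^2}f(a)\lvert\nabla a\rvert^2\big)$, and from the pointwise identity $f(a)(\lvert\nabla g\rvert^2+\lvert\nabla a\rvert^2)=\tfrac14\lvert\nabla u_g\rvert^2$ established in the proof of Lemma \ref{Lemma: Explicit computation for the energy} we get $\int_{D^2}f(a)\lvert\nabla a\rvert^2\le\tfrac14\int_{D^2}\lvert\nabla u_g\rvert^2$. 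Adding the two gradient bounds and inserting these two facts yields the desired control of $\lVert\nabla a\rVert_{L^{2,\infty}(D^2)}$ by the boundary $W^{1,1}$-norm of $g_0$ and the Dirichlet energy $\int_{D^2}\lvert\nabla u_g\rvert^2$. The main obstacle is the harmonic-part step: one must know that the Neumann extension maps $L^1(\partial D^2)$ into $L^{2,\infty}(D^2)$ with a constant independent of the configuration, and crucially that the boundary flux created by $\Phi$ remains of size $O(\sum_i\lvert d_i\rvert)$ even as singular points approach $\partial D^2$ — the latter is secured by the fixed sign of $\partial_\nu\log\lvert x-p_i\rvert$ on the circle, and, for points genuinely on $\partial D^2$, by approximating from the interior via Lemma \ref{Lemma: Stability wrt points}.
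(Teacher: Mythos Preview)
Your proof is correct and takes a genuinely different route from the paper's. The paper argues by duality: it tests $\nabla a$ against an arbitrary $X\in L^{2,1}(D^2)$, Hodge–decomposes $X=\nabla\eta+\nabla^\perp\xi$ with $\eta\in W^{1,(2,1)}(D^2)$ and $\xi\in W^{1,(2,1)}_0(D^2)$ (Lemma \ref{Lemma: Hodge decomposition in L2,1}), observes that the $\xi$–piece integrates to zero, and then uses the defining equation for $a$ together with the embedding $W^{1,(2,1)}(D^2)\hookrightarrow C^0(\overline{D^2})$ to bound $\int_{D^2}X\cdot\nabla a$ by $\big(\|\partial_\theta g_0\|_{L^1}+\sum_i|d_i|\big)\|X\|_{L^{2,1}}$; the count $\sum_i|d_i|$ is then absorbed exactly as you do. Your argument instead splits $a=\tilde a+\Phi$ and estimates each piece pointwise, which is more hands–on but equally short. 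Two small comments: first, Lemma \ref{Lemma: continuous extension from W11 to W 12infty} as stated treats the \emph{Dirichlet} extension of $BV$ data, not the Neumann extension of $L^1$ data — the two are linked via harmonic conjugation (if $\partial_\nu\tilde a=\beta$ with $\int_{\partial D^2}\beta=0$, then $\mathcal H(\tilde a)$ has Dirichlet datum a primitive of $\beta$, which lies in $W^{1,1}(\partial D^2)$), so the estimate you need does follow, but not ``exactly'' from that lemma. Second, your appeal to Lemma \ref{Lemma: Stability wrt points} for boundary singularities is unnecessary here: Lemma \ref{Lemma: Explicit form for a} already covers $p_i\in\partial D^2$, the corresponding contribution to $\beta$ is the constant $\tfrac12 d_i$, and $\nabla\log|\cdot-p_i|$ is still in $L^{2,\infty}(D^2)$ uniformly. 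The duality proof in the paper has the advantage of treating interior and boundary singularities in one stroke and of never needing to estimate $\beta$ explicitly; your decomposition has the advantage of being entirely constructive and of making transparent where each term in the final bound originates.
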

\begin{proof}
Let $X\in L^{2,1}(D^2)$ be a vector field. Then by Lemma \ref{Lemma: Hodge decomposition in L2,1} there exist functions $\eta\in W^{1,(2,1)}(D^2)$, $\xi\in W^{1, (2,1)}_0(D^2)$ such that
$$\lVert\eta\rVert_{W^{1, (2,1)}(D^2)}\leq C\lVert X\rVert_{L^{2,1}(D^2)}$$
and
$$X=\nabla \eta+\nabla^\perp \xi.$$
Recall that $W^{1, (2,1)}(D^2)\subset C^0(\overline{D^2})$ and the embedding is continuous.\\
Then there holds
\begin{align*}
\int_{D^2} X\nabla a&=\int_{D^2} \nabla \eta\nabla a+\int_{D^2}\nabla^\perp \xi \nabla a=\int_{D^2}\nabla\eta\nabla a\\&=-i\int_{\partial D^2} \eta g_0^{-1}\partial_\theta g_0-2\pi\sum_{\substack{i=1\\ p_i\in D^2}}^Qd_i\eta(p_i)-\pi\sum_{\substack{i=1\\ p_i\in \partial D^2}}^Qd_i\eta(p_i).
\end{align*}
Here we used the fact that since $\xi$ has vanishing trace on $\partial D^2$, integrating by parts
we get
$$\int_{D^2}\nabla^\perp \xi \nabla a=0.$$
Moreover we know from Lemma \ref{Lemma: A priori estimate on the number of singularities} (or Corollary \ref{Corollary: Estimate on number of singularities with boundary also in H1/2}) that
\begin{align*}
\sum_{i=1}^Q\lvert d_i\rvert\leq C\left(\int_{D^2}f(a)\lvert\nabla a\rvert^2+\lVert g_0\rVert_{W^{1,1}(\partial D^2)}\right)
\end{align*}
Therefore there holds
\begin{align}
\nonumber
\int_{D^2}X\nabla a\leq & C\left(\lVert g_0^{-1}\partial_\theta g_0\rVert_{L^1(\partial D^2)}+\int_{D^2}f(a)\lvert\nabla a\rvert^2dx+\lVert g_0\rVert_{W^{1,1}(\partial D^2)}\right)\lVert \eta\rVert_{L^{\infty}(D^2)}\\
\nonumber
\leq & C\left(\mathcal{E}(u_g)+\lVert g_0\rVert_{W^{1,1}(\partial D^2)}\right)\lVert X\rVert_{L^{2,1}(D^2)}.
\end{align}
As the above estimate holds true for any $X\in L^{2,1}(D^2)$ the statement follows.
\end{proof}

\subsection{Proof of Theorem I.1}
We are now ready to prove Theorem I.1.
\begin{proof}[Proof of Theorem I.1]
\begin{enumerate}

\item[a)]
First we observe that by condition (\ref{I.15}) the sequence $(b_{g_k})_{k\in \mathbb{N}}$ is bounded in $W^{1,2}(D^2)$. Therefore there exists a function $b\in W^{1,2}(D^2)$ such that
\begin{align*}
b_{g_k}\rightharpoonup b\text{ weakly in }W^{1,2}(D^2)\text{ and a.e.}
\end{align*}
up to a subsequence.\\

\textbf{Claim 1:} There is a subsequence of $(g_k)_{k\in \mathbb{N}}$, say indexed by $\Lambda\subset \mathbb{N}$, a function $g\in W^{1, (2,\infty)}_{loc}\cap W^{1,p}(D^2,S^1)$, a zero average function $a\in W^{1, (2,\infty)}_{loc}\cap W^{1,1}(D^2,\mathbb{R})$ and $b\in W^{1, 2}_0(D^2,\mathbb{R})$  such that
\begin{align*}
a_{g_k}\to a\text{ a.e. and }\nabla a_{g_k}\rightharpoonup \nabla a \text{ weakly-}* \text{ in }L^{2,\infty}_{loc}(D^2),
\end{align*}
\begin{align*}
g_k\to g\text{ a.e. and }\nabla g_k\rightharpoonup \nabla g \text{ weakly-}*\text{ in }L^{2,\infty}_{loc}(D^2)
\end{align*}
along $\Lambda$,
\begin{align*}
-ig^{-1}\nabla g=\,\nabla^\perp a+\nabla b
\end{align*}
and $g$ has isolated topological singularities in $D^2$.
\begin{proof}[Proof of Claim 1]
First observe that by Lemma \ref{Lemma: Pseudo Courant Lebesgue} for any $n\in \mathbb{N}$ there exists $r_n\in (1-\frac{1}{n},1)$ and a subsequence indexed by $\Lambda_n\subset\mathbb{N}$ such that for any $k\in \Lambda_n$ $g_k$ has no topological singularities on $\partial D^2_{r_n}$ and
\begin{align*}
\sup_{k\in \Lambda_n}\int_{\partial D^2_{r_n}}\lvert ig_k^{-1}\nabla g_k+\nabla b_{g_k}\rvert<\infty.
\end{align*}
Thus by Lemma \ref{Lemma: A priori estimate on the number of singularities} (applied to the sequence of functions $(e^{ib_{g_k}}g_k)_{k\in \Lambda_n}$) the number of topological singularities of $a_k$ in $D_{r_n}$ (counted with multiplicities) is uniformly bounded for all $k\in \Lambda_n$.
Now let $n\in \mathbb{N}$. By Lemma \ref{Lemma: Estimate from estimate on number of topological singularities} (applied to $D_{r_n}$) the sequence $(\nabla a_{g_k})_{k\in \Lambda_n}$ is bounded in $L^{2,\infty}(D^2_{r_n})$.
By Poincar\'e Lemma and Banach-Alaoglu Theorem\footnote{Since $L^{2,\infty}(D^2_{r_n})=\left(L^{2,1}(D^2_{r_n})\right)^\ast$ and $L^{2,1}(D^2_{r_n})$ is separable, Banach-Alaoglu holds for $L^{2,\infty}(D^2_{r_n})$.} the sequence
\begin{align*}
\left(a_{g_k}-\fint_{D_{r_n}}a_{g_k}\right)_{k\in \Lambda_n}
\end{align*}
has a subsequence converging weakly in $W^{1,(2,\infty)}(D_{r_n})$ to a function $a_{n}\in W^{1,(2,\infty)}(D_{r_n})$.\\
Iterating this argument for any $n\in \mathbb{N}$ and extracting a diagonal subsequence we find for any $n\in \mathbb{N}$ a function $a_{n}\in W^{1,(2,\infty)}(D^2_{r_n})$ such that for $n,m\in \mathbb{N}$, $n\leq m$ the function $a_{n}-a_{m}$ is constant in $D_{r_n}\cap D_{r_m}$.\\
All these functions can be glued together (subtracting a constant whenever necessary) to obtain a function $a\in W^{1,(2,\infty)}_{loc}(D^2)$ (uniquely defined up to an additive constant) such that
\begin{align*}
\nabla a_{g_k}\rightharpoonup\nabla a\text{ weakly-}*\text{ in }L^{2,\infty}_{loc}(D^2)
\end{align*}
up to a subsequence.
Moreover since the sequence $(a_{g_k})_{k\in \mathbb{N}}$ is bounded in $W^{1,p'}(D^2)$ (with $p'=\min\{2,p\}$), taking a further subsequence if necessary we may assume that the convergence also takes place weakly in $W^{1,p'}(D^2)$ a.e.. Then $a\in W^{1,1}(D^2)$ and the weak $W^{1,p'}$-convergence determines the additive constant in the definition of $a$. In particular $a$ has average zero.\\
For any $n\in \mathbb{N}$ Corollary \ref{Lemma: Explicit form for g} (applied to $D_{r_n}^2$) implies that the sequence $(g_k)_{k\in \Lambda_n}$ is also bounded in $W^{1, (2,\infty)}(D_{r_n}^2)$, since for any $k\in \Lambda_n$
\begin{align*}
\lvert \nabla g_k\rvert\leq \lvert \nabla \varphi_k\rvert+\sum_{\substack{i=1\\ p_i^k\in D_{r_n}^2}}^{Q_k}\frac{\lvert d_i^k\rvert}{\lvert z-p_i^k\rvert}\leq\lvert \nabla^\perp a_{g_k}\rvert+\lvert \nabla b_{g_k}\rvert+\sum_{\substack{i=1\\ p_i^k\in D_{r_n}^2}}^{Q_k}\frac{\lvert d_i^k\rvert}{\lvert z-p_i^k\rvert},
\end{align*}
where $\varphi_k$ was defined in Corollary \ref{Lemma: Explicit form for g}, and $\displaystyle \sum_{\substack{i=1\\ p_i^k\in D_{r_n}^2}}^{Q_k}\lvert d_i^k\rvert$ is uniformly bounded for any $k\in \Lambda_n$. Therefore applying Banach-Alaoglu Theorem and Rellich-Kondrachov Theorem to the sequence $(g_k\vert_{D^2_{r_n}})_{k\in \Lambda_n}$ for any $n\in \mathbb{N}$
and extracting a diagonal subsequence we find a function $g\in W^{1,(2,\infty)}_{loc}(D^2, S^1)$ such that
\begin{align*}
g_k\to g\text{ a.e. and }\nabla g_k\rightharpoonup\nabla g\text{ weakly-}*\text{ in }L^{(2,\infty)}_{loc}(D^2)
\end{align*}
up to a subsequence. Moreover since the sequence $(g_k)_{k\in \mathbb{N}}$ is bounded in $W^{1,p}(D^2)$, taking a further subsequence if necessary we may assume that the convergence also takes place weakly in $W^{1,p}(D^2)$. In particular $g\in W^{1,p}(D^2)$.\\
It follows that
\begin{align*}
g_k^{-1}\nabla g_k\to g^{-1}\nabla g \text{ in } \mathcal{D}'(D^2)
\end{align*}
up to a subsequence. Therefore, since for any $k\in \mathbb{N}$
\begin{align*}
-ig_k^{-1}\nabla g_k=\nabla^\perp a_{g_k}+\nabla b_{g_k},
\end{align*}
we conclude that
\begin{align*}
-ig^{-1}\nabla g=\nabla^\perp a+\nabla b.
\end{align*}
We still need to check that $g$ has isolated singularities in $D^2$.
To this end let's fix $n\in \mathbb{N}$.
Observe that for any $k\in \Lambda_n$ $a_{g_k}$ satisfies
$$\Delta a_{g_k}=2\pi\sum_{\substack{i=1\\ p_i^k\in D_{r_n}^2}}^{Q_{k}}d_i^k\delta_{p_i^k}\text{ in }D^2_{r_n}$$
for some integers $Q_k$, $d_1^k,...,d_{Q_k}^k$ and points $p_1,...,p_{Q_k}$ in $D_{r_n}^2$, where $\displaystyle \sum_{\substack{i=1\\ p_i^k\in D_{r_n}^2}}^{Q_k}\lvert d_i^k\rvert$ is uniformly bounded for any $k\in \Lambda_n$.\\
Upon passing to a subsequence we may assume that the integers $Q_k$ and $d_i^k$ do not depend on $k$ (therefore we will drop the $k$ in the notation) and that for any $i\in \{1,...,Q\}$
$$p_i^k\to p_i$$
for some points $p_1,...,p_Q\in \overline{D^2_{r_n}}$.
Then for any $\varphi\in C_c^\infty(D^2_{r_n})$
$$\int_{D^2_{r_n}}\nabla \varphi\nabla a=\lim_{k\to \infty}\int_{D^2_{r_n}}\nabla \varphi\nabla a_{g_k}=2\pi\sum_{i=1}^Qd_i\phi(p_i),$$
therefore
$$\operatorname{div}(ig^{-1}\nabla^\perp g)=\Delta a=2\pi\sum_{i=1}^Q d_i\delta_{p_i}\text{ in }D^2_{r_n}.$$
Thus $g$ has finitely many topological singularities in $D^2_{r_n}$ and since this is true for any $n\in \mathbb{N}$ we conclude that $g$ has isolated topological singularities in $D^2$.
\end{proof}

\textbf{Claim 2:} There exists a subsequence of $(g_k)_{k\in \Lambda}$, say indexed by $\tilde{\Lambda}$, such that
$$\mathcal{E}(g)\leq \liminf_{\substack{k\to \infty\\ k\in \tilde{\Lambda}}}\mathcal{E}(g_k).$$
\begin{proof}[Proof of Claim 2]

Observe that by condition (\ref{th-I.1}) there is a subsequence, say indexed by $\tilde{\Lambda}\subset\Lambda$, and a map $u\in W^{1,2}(D^2, S^2)$ such that
\begin{align*}
u_{g_k}\rightharpoonup u\text{ weakly in }W^{1,2}(D^2, S^2)\text{ and a.e.}
\end{align*}
along $\tilde{\Lambda}$.
Then
$$\int_{D^2}\lvert \nabla u\rvert^2\leq \liminf_{\substack{k\to \infty\\ k\in \tilde{\Lambda}}}\int_{D^2}\lvert \nabla u_{g_k}\rvert^2.$$
Since for any $k\in \mathbb{N}$
$$u_{g_k}=\pi^{-1}(g_k e^{a_k}),$$
the pointwise convergence of $(g_k)_{k\in \tilde{\Lambda}}$, $(a_k)_{k\in \tilde{\Lambda}}$ and $(u_{g_k})_{k\in \tilde{\Lambda}}$ implies that
$$u=\pi^{-1}(g e^{a}).$$
Moreover replacing $\tilde{\Lambda}$ with a subsequence if necessary we have
$$\int_{D^2}\lvert \nabla b\rvert^2\leq\liminf_{\substack{k\to\infty\\ k\in \tilde{\Lambda}}}\int_{D^2}\lvert \nabla b_{g_k}\rvert^2.$$
Therefore
$$\mathcal{E}(g)=\frac{1}{4}\int_{D^2}\lvert \nabla u\rvert^2
+\frac{1}{4}\int_{D^2}\lvert\nabla b\rvert^2
\leq \liminf_{\substack{k\to \infty\\ k\in \tilde{\Lambda}}}\mathcal{E}(g_k).$$
\end{proof}

\item[b)] By Lemma \ref{Lemma: Estimate from estimate on number of topological singularities} (applied to the case $g_0\in W^{1,1}\cap H^\frac{1}{2}(\partial D^2)$) the sequence $(\nabla a_{g_k})_{g_k}$ is bounded in $L^{2,\infty}(D^2)$.
By Corollary \ref{Lemma: Explicit form for g} the sequence $(g_k)_{k\in \mathbb{N}}$ is bounded in $W^{1,(2,\infty)}(D^2)$.
Therefore by Banach-Alaoglu Theorem and Rellich-Kondrachov Theorem we can find a zero average function $a\in W^{1, (2,\infty)}(D^2)$, $b\in W^{1,2}_0(D^2)$, $g\in W^{1,(2,\infty)}(D^2, S^1)$ such that
\begin{align*}
a_k\to a\text{ a.e. and }\nabla a_k\rightharpoonup \nabla a\text{ weakly-}\ast\text{ in }L^{2,\infty}(D^2),
\end{align*}
\begin{align*}
b_k\rightharpoonup b\text{ weakly in }W^{1,2}(D^2)\text{  and a.e.},
\end{align*}
\begin{align*}
g_k\to g\text{ a.e. and }\nabla g_k\rightharpoonup \nabla g\text{ weakly-}\ast\text{ in }L^{2,\infty}(D^2)
\end{align*}
along a subsequence and
\begin{align}\label{Equation: Hodge decomposition in the limit}
-ig^{-1}\nabla g=\nabla^\perp a+\nabla b\text{ in }D^2.
\end{align}
Moreover following the argument of Claim 2 in part a) we see that
\begin{align*}
\mathcal{E}(u_g)\leq \liminf_{\substack{k\to\infty\\ k\in \Lambda}}\mathcal{E}(u_{g_k})
\end{align*}
where $\Lambda\subset \mathbb{N}$ is the index set of a further subsequence.\\

\textbf{Claim 1:} $g$ has finitely many topological singularities in $\overline{D^2}$.
\begin{proof}[Proof of Claim 1]
For any $k\in \mathbb{N}$ denote $p_1^k,...,p_{Q_k}^k$, $d_1^k,...,d_{Q_k}^k$ the topological singularities of $g_k$ and their degrees.
By Corollary \ref{Corollary: Estimate on number of singularities with boundary also in H1/2} the number of topological singularities of $g_k$ and their degrees are uniformly bounded for all $k\in \mathbb{N}$.
Therefore there exists a subsequence, say indexed by $\Lambda'\subset\Lambda$, $Q\in \mathbb{N}$ points $p_1,...,p_Q\in \overline{D^2}$ and degrees $d_1,...,d_Q\in \mathbb{Z}$ such that for any $i\in \{1,...,Q\}$
\begin{align*}
d_i^k=d_i\quad\forall k\in \Lambda'
\end{align*}
and
\begin{align*}
\lim_{\substack{k\to\infty\\k\in \Lambda'}}p_i^k=p_i.
\end{align*}
Thus for any $\phi\in C^\infty(\overline{D^2})$ there holds
$$\int_{D^2}\nabla \phi\nabla a=\lim_{\substack{k\to\infty\\k\in \Lambda'}}\int_{D^2}\nabla \phi\nabla a_n=-i\int_{\partial D^2}\phi g_0^{-1}\partial_\theta g_0-2\pi\sum_{\substack{i=1\\ p_i\in D^2}}^Q d_i\phi(p_i)-\pi\sum_{\substack{i=1\\ p_i\in \partial D^2}}^Q d_i\phi(p_i),$$
so $g$ has finitely many topological singularities in $\overline{D^2}$.
\end{proof}
\end{enumerate}
\end{proof}

\section{Proof of Theorem I.3}
In this section we prove Theorem I.3. We divide the proof in two steps, corresponding to the two following Lemmas.
\begin{lem}\label{Lemma: Equivalent conditions g S^1 harmonic}
Let $g\in W^{1,1}(D^2, S^1)$ be as in Definition \ref{df-I.1}.
Assume that
\begin{align*}
\mathcal{E}(g)<\infty.
\end{align*}
Then the following conditions are equivalent:
\begin{enumerate}
\item $g$ is an $S^1$-harmonic map,
\item $b_g\equiv0$ in $D^2$.
\end{enumerate}
If $g$ has isolated topological singularities, either of the above condition is equivalent to
\begin{enumerate}
\item[3.] $u_g$ is conformal.
\end{enumerate}
\end{lem}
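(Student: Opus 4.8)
The plan is to prove the two equivalences separately, using throughout the Hodge decomposition $-ig^{-1}\nabla g=\nabla^\perp a_g+\nabla b_g$ from Definition \ref{df-I.1}.

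First I would settle $1\Leftrightarrow 2$. Since $g$ takes values in $S^1$, the vector field $g\wedge\nabla g$ entering the weak equation (\ref{I.6}) is real and coincides with $-ig^{-1}\nabla g=\nabla^\perp a_g+\nabla b_g$. Because $\operatorname{div}(\nabla^\perp a_g)=0$ in $\mathcal D'(D^2)$ for every $a_g\in W^{1,1}$ (the logarithmic singularities contribute nothing, as $\operatorname{div}\nabla^\perp=0$ holds distributionally), one gets $\operatorname{div}(g\wedge\nabla g)=\Delta b_g$, so that $g$ solves (\ref{I.6}) if and only if $\Delta b_g=0$ in $\mathcal D'(D^2)$. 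As $\mathcal E(g)<\infty$ forces $b_g\in W^{1,2}_0(D^2)$, testing $\Delta b_g=0$ against $b_g$ gives $\int_{D^2}|\nabla b_g|^2=0$, hence $b_g\equiv 0$; the converse is immediate.

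For $2\Leftrightarrow 3$ I would first read off the Hopf differential of $u_g=\pi^{-1}(w)$, $w:=e^{a_g}g$. Using the decomposition to write $g^{-1}\partial_z g=\partial_z a_g+i\,\partial_z b_g$ and $g^{-1}\partial_{\bar z}g=-\partial_{\bar z}a_g+i\,\partial_{\bar z}b_g$, a short computation yields the two identities $\partial_z w=(2\partial_z a_g+i\,\partial_z b_g)\,w$ and $\partial_{\bar z}w=i\,(\partial_{\bar z}b_g)\,w$. Since $\pi^{-1}$ is conformal and $|w|^2=e^{2a_g}$, the Hopf differential of $u_g$ is a nonzero multiple of $f(a_g)\,(2\partial_z a_g+i\,\partial_z b_g)\,(\partial_z b_g)\,dz^2$, with $f$ as in Lemma \ref{Lemma: Explicit computation for the energy}. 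As $f>0$, the map $u_g$ is weakly conformal if and only if $(2\partial_z a_g+i\,\partial_z b_g)\,\partial_z b_g=0$ almost everywhere. In particular $b_g\equiv 0$ gives $\partial_z b_g=0$ and hence conformality, which proves $2\Rightarrow 3$.

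The implication $3\Rightarrow 2$ is the heart of the matter. On the set $B:=\{\nabla b_g\neq 0\}$ the factor $\partial_z b_g$ is nonzero, so conformality forces $2\partial_z a_g+i\,\partial_z b_g=0$ there, which unwinds into the real relation $\nabla b_g=-2\,\nabla^\perp a_g$ on $B$; off $B$ one has $\nabla b_g=0$ by definition. Hence $\int_{D^2}\nabla^\perp a_g\cdot\nabla b_g=\int_{B}\nabla^\perp a_g\cdot\nabla b_g=-2\int_{B}|\nabla a_g|^2$. On the other hand $\nabla^\perp a_g\cdot\nabla b_g=\det(\nabla a_g,\nabla b_g)$ is a null Lagrangian, so Stokes gives $\int_{D^2}da_g\wedge db_g=\int_{\partial D^2}a_g\,\partial_\theta b_g\,d\theta=0$, the boundary term vanishing because $b_g=0$ on $\partial D^2$. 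Comparing the two evaluations yields $\int_{B}|\nabla a_g|^2=0$, whence $\nabla^\perp a_g=0$ and therefore $\nabla b_g=-2\nabla^\perp a_g=0$ a.e. on $B$; this forces $|B|=0$, i.e. $b_g\equiv 0$.

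The main obstacle is making the Stokes step rigorous at the singular points $p_i$, where $\nabla a_g\sim|z-p_i|^{-1}$ lies only in $L^{2,\infty}$ so that the integrand is not a priori absolutely integrable. I would handle this by excising discs $B_\varepsilon(p_i)$ and showing that the circular contributions $\int_{\partial B_\varepsilon(p_i)}a_g\,db_g$ tend to $0$ along a suitable sequence $\varepsilon\to 0$: the leading logarithmic part of $a_g$ is constant on each circle and pairs with $\oint db_g=0$, while the bounded remainder is controlled by Cauchy--Schwarz together with the slicing bound $\liminf_{\varepsilon\to 0}\varepsilon\int_{\partial B_\varepsilon(p_i)}|\nabla b_g|^2=0$, available since $\nabla b_g\in L^2(D^2)$. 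Once these terms are shown negligible the computation closes as above, and the three conditions are equivalent.
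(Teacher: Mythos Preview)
Your proof of $1\Leftrightarrow 2$ is correct and matches the paper's argument exactly.

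For $2\Leftrightarrow 3$ you take a genuinely different route from the paper. You compute the Hopf differential of $u_g$, which detects only \emph{weak} conformality, and this is why $3\Rightarrow 2$ becomes work: the vanishing of the Hopf differential leaves you with two pointwise branches, $\nabla b_g=0$ or $\nabla b_g=-2\nabla^\perp a_g$, and you then need the null-Lagrangian identity $\int_{D^2}\nabla^\perp a_g\cdot\nabla b_g=0$ to eliminate the second branch. The paper instead reads ``$u_g$ conformal'' as orientation-preserving conformal (equivalently: $e^{a_g}g$ is holomorphic). With that reading, $3\Rightarrow 2$ is a one-line pointwise computation: since $\partial_{\bar z}(e^{a_g}g)=i(\partial_{\bar z}b_g)\,e^{a_g}g$ by your own formula, holomorphy of $e^{a_g}g$ gives $\partial_{\bar z}b_g=0$ a.e., and as $b_g$ is real this forces $\nabla b_g=0$, hence $b_g\equiv 0$. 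No integration by parts, no excision, no Stokes.

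Your Stokes step is where all the difficulty concentrates, and your proposed fix covers only half of it. The excision at the interior singularities $p_i$ is fine: the logarithmic part of $a_g$ is constant on $\partial B_\varepsilon(p_i)$ and pairs with $\oint db_g=0$, while the smooth remainder is handled by the slicing bound you indicate. But you also need the identity to hold up to $\partial D^2$, and there $\nabla a_g$ is only in $L^{2,\infty}$ (the harmonic part $\tilde a_g$ from Lemma~\ref{Lemma: Explicit form for a} has Neumann data merely in $L^1$), so the naive approximation of $b_g$ by $C^\infty_c$ functions in $W^{1,2}$ does not pass to the limit in $\int\nabla^\perp a_g\cdot\nabla\phi_n$. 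One can close this---for instance by also excising an annulus near $\partial D^2$, writing $\int_{\partial D^2_r}b_g\,da_g$ instead of $\int_{\partial D^2_r}a_g\,db_g$, and using $\|b_g\|_{L^2(\partial D^2_r)}\to 0$ together with a suitable bound on $\partial_\tau a_g$ along $\partial D^2_r$---but this extra layer is simply absent in the paper's direct pointwise argument. If you keep your approach, make that outer-boundary step explicit; alternatively, adopting the paper's interpretation of ``conformal'' collapses the whole implication to a single line.
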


\begin{proof}
Assume first that $g$ is $S^1$-harmonic. We claim that $b_g\equiv0$.\\
By assumption
$$\operatorname{div}(g^{-1}\nabla g)=0.$$
Plugging in the decomposition (\ref{Equation: Definition of ag}) we obtain $$\operatorname{div}(\nabla^\perp a_g+\nabla b_g)=0.$$
Now since
$$\operatorname{div}(\nabla^\perp a_g)=0,$$
$b_g$ solves
\begin{align*}
\begin{cases}
\Delta b_g=0&\text{ in }D^2\\[5mm]
b_g=0 &\text{ on }\partial D^2.
\end{cases}
\end{align*}
Since $b_g\in H^1(D^2)$ we conclude that $b_g\equiv 0$ in $D^2$.\\
We now show the converse: assume that $b_g\equiv 0$. Then
$$g^{-1}\nabla g=i\nabla^\perp a_g.$$
Therefore
$$-i\operatorname{div}(g^{-1}\nabla g)=\operatorname{div}(\nabla^\perp a_g)=0.$$
We conclude that $g$ is an $S^1$-harmonic map.\\
Next assume that $g$ has isolated topological singularities. We will show that $b_g\equiv0$ if and only if $u_g$ is conformal.\\
Assume first that $b_g\equiv0$.
Then
$$
g^{-1}\nabla g=i\nabla^\perp a_g.
$$
Observe that
\begin{align}\label{Equation: Explicit expression for the gradient of ug}
\nabla u_g=D\pi^{-1}(e^{a_g}g)e^{a_g}(\nabla g+g\nabla a_g)
\end{align}
Then
\begin{align*}
\nabla u_g =D\pi^{-1}(e^{a_g}g)e^{a_g}g(i\nabla^\perp a_g+\nabla a_g).
\end{align*}
As $\pi$ is conformal and
$$
\nabla a_g(x)+i\nabla^\perp a_g(x)
$$
defines a conformal map at any point $x\in D^2$ away from the singular points of $a_g$ we conclude that $u_g$ is conformal away from the topological singularities of $g$.\\
Now if we consider separately the preimages of $S^2\smallsetminus\{S\}$ and $S^2\smallsetminus\{N\}$ (where $S$ and $N$ denote the south and the north pole of $S^2$ respectively) and look at the composition $\pi_i\circ u_g$ for the corresponding stereographic projection $\pi_i$ we notice that the singularities of $u_g$ in $D^2$, corresponding to the topological singularities of $g$ are removable. We conclude that $u_g$ is conformal.\\
Conversely, if we assume that $u_g$ is conformal computation (\ref{Equation: Explicit expression for the gradient of ug}) implies that
$$g^{-1}\nabla g+\nabla a_g$$
is conformal a.e., therefore
\begin{align*}
-ig^{-1}\nabla g=\nabla^\perp a_g
\end{align*}
and thus $b_g\equiv 0$.
\end{proof}

\begin{lem}\label{Lemma: Euler Lagrange Equation for smooth variations in the target}
Let $g\in W^{1,p}(D^2, S^1)$ for some $p>1$ with finite renormalized Dirichlet Energy.
Let
\begin{align}
\nonumber
f:\mathbb{R}\to\mathbb{R}, \quad x\mapsto \frac{e^{2x}}{(1+e^{2x})^2}.
\end{align}
Then 
$g$ is a critical point of the renormalized Dirichlet Energy for smooth variations in the target, that is
\begin{align}\label{Equation: Definition of critical for smooth variations in the target}
\forall \psi\in C^\infty_c(D^2, \mathbb{R})\quad \frac{d}{dt}\bigg\vert_{t=0}\mathcal{E}(g e^{it\psi})=0
\end{align}
if and only if $b_g\equiv 0$ in $D^2$.
\end{lem}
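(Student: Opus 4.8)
The plan is to compute the first variation directly by tracking how the Hodge data $(a_g,b_g)$ transform under the target variation $g_t := g\,e^{it\psi}$, and then to exploit the explicit quadratic expression for $\mathcal{E}$ furnished by Lemma \ref{Lemma: Improving the minimizing sequence}. First I would note that $g_t^{-1}\nabla g_t = g^{-1}\nabla g + it\,\nabla\psi$, so that $-ig_t^{-1}\nabla g_t = \nabla^\perp a_g + \nabla b_g + t\,\nabla\psi$. Taking the divergence and using $\operatorname{div}\nabla^\perp a_g = 0$ shows that $b_{g_t}$ solves the same Dirichlet problem (\ref{I.13}) as $b_g + t\psi$ (here $\psi$ vanishes on $\partial D^2$ since $\psi\in C^\infty_c$), so by uniqueness $b_{g_t} = b_g + t\psi$. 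Substituting back into the defining relation for $a_{g_t}$ gives $\nabla^\perp a_{g_t} = \nabla^\perp a_g$, and the zero-average normalization forces $a_{g_t} = a_g$. Thus a smooth target variation leaves $a_g$ untouched and merely shifts $b_g$ by $t\psi$.

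The key simplification is then immediate: inserting $a_{g_t}=a_g$ and $b_{g_t}=b_g+t\psi$ into the identity
$$\mathcal{E}(g_t) = 2\int_{D^2}f(a_g)\lvert \nabla a_g\rvert^2 + \int_{D^2}\left(f(a_g) + \tfrac14\right)\lvert \nabla(b_g + t\psi)\rvert^2$$
of Lemma \ref{Lemma: Improving the minimizing sequence} displays $\mathcal{E}(g_t)$ as a quadratic polynomial in $t$ whose only $t$-dependence sits in the second term. Differentiating at $t=0$ yields
$$\frac{d}{dt}\bigg\vert_{t=0}\mathcal{E}(g_t) = 2\int_{D^2}\left(f(a_g) + \tfrac14\right)\langle\nabla b_g, \nabla\psi\rangle,$$
so criticality is equivalent to $\int_{D^2}(f(a_g)+\tfrac14)\langle\nabla b_g,\nabla\psi\rangle = 0$ for every $\psi\in C^\infty_c(D^2,\mathbb{R})$.

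It remains to show this last condition holds precisely when $b_g\equiv 0$. The implication $b_g\equiv 0\Rightarrow$ criticality is trivial, since then $\nabla b_g=0$. For the converse I would test the weak identity with $\psi = b_g$ itself. This is legitimate because the functional $\psi\mapsto\int_{D^2}(f(a_g)+\tfrac14)\langle\nabla b_g,\nabla\psi\rangle$ is bounded on $W^{1,2}_0(D^2)$: one has $0\le f\le\tfrac14$, so the weight lies in $[\tfrac14,\tfrac12]$, while $\nabla b_g\in L^2(D^2)$ because $\mathcal{E}(g)<\infty$. Hence by density of $C^\infty_c(D^2)$ in $W^{1,2}_0(D^2)$, and since $b_g\in W^{1,2}_0(D^2)$, the identity extends to $\psi=b_g$, giving $\int_{D^2}(f(a_g)+\tfrac14)\lvert\nabla b_g\rvert^2 = 0$. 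As $f(a_g)+\tfrac14\ge\tfrac14>0$ pointwise, this forces $\nabla b_g=0$ a.e., and the boundary condition $b_g=0$ on $\partial D^2$ then yields $b_g\equiv 0$.

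The computation is essentially mechanical; the only points requiring care are verifying that $g_t\in W^{1,p}(D^2,S^1)$ with $\mathcal{E}(g_t)<\infty$ (so that the formula of Lemma \ref{Lemma: Improving the minimizing sequence} applies to $g_t$) and the density argument licensing the test function $\psi=b_g$. The genuinely important observation, which makes the variational picture transparent, is that smooth target variations \emph{decouple} the two pieces of the energy: they fix the part encoded by $a_g$ and act only on $b_g$, so the Euler--Lagrange equation reduces to a linear, uniformly elliptic equation for $b_g$ with the strictly positive weight $f(a_g)+\tfrac14$.
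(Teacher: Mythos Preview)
Your proof is correct and follows essentially the same approach as the paper: both compute the first variation using $a_{g_t}=a_g$ and $b_{g_t}=b_g+t\psi$, arrive at the weak equation $\int_{D^2}(f(a_g)+\tfrac14)\langle\nabla b_g,\nabla\psi\rangle=0$, and conclude $b_g\equiv 0$ from the strict positivity of the weight. The only cosmetic differences are that you invoke the formula of Lemma~\ref{Lemma: Improving the minimizing sequence} (which has already absorbed the cross term $\int f(a_g)\nabla^\perp a_g\cdot\nabla\psi$) rather than that of Lemma~\ref{Lemma: Explicit computation for the energy}, and that you test directly with $\psi=b_g$ where the paper phrases the final step via strict convexity of the associated quadratic functional.
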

\begin{proof}
Let $\psi\in C^\infty_c(D^2, \mathbb{R})$. For any $t>0$ let
$$g_t=g e^{it\psi}.$$
Then
$$\nabla g_t= \nabla g\, e^{it\psi}+i tg\, e^{it\psi}\nabla\psi.$$
Therefore
$$g_t^{-1}\nabla g_t=g^{-1}\nabla g_t+it \nabla \psi=i \nabla^\perp a_g+i \nabla b_g+it \nabla\psi.$$
Thus it follows from (\ref{computations explicit form energy, 1}) that
\begin{align}\label{Equation: Computation EL}
\mathcal{E}(g_t)-\mathcal{E}(g)=&\int_{D^2}f(a_g)\left(\lvert \nabla g_t\rvert^2-\lvert \nabla g\rvert^2\right)+\frac{1}{4}\int_{D^2}\left(\lvert \nabla b_g+t\nabla \psi\rvert^2-\lvert \nabla b_g\rvert^2\right)\\
\nonumber
=&2t\int_{D^2}f(a_g)\left(<-ig^{-1}\nabla g, \nabla \psi>+\frac{1}{4}<\nabla b_g, \nabla \psi>\right)\\
\nonumber
&+t^2\int_{D^2}\left(f(a_g)\lvert \nabla \psi\rvert^2+\frac{1}{4}\lvert\nabla \psi\rvert^2\right)
\end{align}
and we get
$$\frac{d}{dt}\bigg\vert_{t=0}\mathcal{E}(g_t)=2\int_{D^2}f(a_g)<-ig^{-1}\nabla g, \nabla\psi>+\frac{1}{4}<\nabla b_g, \nabla\psi>.$$
Therefore $g$ is a critical point of $\mathcal{E}$ (with respect to the variations introduced in (\ref{Equation: Definition of critical for smooth variations in the target})) if and only if
\begin{align}\label{Equation: Cauchy problem for bg critical}
\text{div}\left(-i\,f(a_g)g^{-1}\nabla g+\frac{1}{4}\nabla b_g\right)=0.
\end{align}
%
%
Plugging in the Hodge decomposition (\ref{Equation: Definition of ag}) in (\ref{Equation: Cauchy problem for bg critical}) we obtain
\begin{align*}
\operatorname{div}\left(f(a_g)(\nabla^\perp a_g+\nabla b_g)+\frac{1}{4}\nabla b_g\right)=0.
\end{align*}
Recall that
$$\text{div}\left(f(a_g)\nabla^\perp a_g\right)=0$$
as we saw in (\ref{Equation: div(f(a)nablaa)=0}).\\
Therefore we can rewrite (\ref{Equation: Cauchy problem for bg critical}) as
\begin{align}\label{Equation: Second Cauchy problem for bg critical}
\text{div}\left(f(a_g)\nabla b_g+\frac{1}{4}\nabla b_g\right)=0.
\end{align}
We claim that the only solution in $W^{1,2}_0(D^2, \mathbb{R})$ of (\ref{Equation: Second Cauchy problem for bg critical}) is $b_g\equiv 0$.\\
In fact the equation (\ref{Equation: Second Cauchy problem for bg critical}) is the Euler-Lagrange equation of the energy
$$E: W^{1,2}_0(D^2, \mathbb{R})\to \mathbb{R}, \quad h\mapsto \int_{D^2}\left(f(a_g)+\frac{1}{4}\right)\lvert \nabla h\rvert^2.$$
As the energy $E$ is strictly convex, it has a single critical point, which has to be $b_g\equiv 0$.\\
We conclude that $b_g$ is a solution of (\ref{Equation: Cauchy problem for bg critical}) if and only if $b_g\equiv 0$.\\
\end{proof}

\begin{rem}\label{Remark: general variations}
One could also consider variations of the type
$$g_t=\frac{g+t\phi}{\lvert g+t\phi\rvert},$$
where $\phi\in C^\infty_c(D^2, \mathbb{R}^2)$.
Nevertheless there are $S^1$-harmonic maps $g$ with finite energy $\mathcal{E}$ and maps $\phi\in C^\infty_c(D^2, \mathbb{R}^2)$, for which the energy $\mathcal{E}(g_t)$ is not finite for any $t\neq 0$.\\
For instance let $g(re^{i\theta})=e^{i\theta}$, $e_1$ the first basis vector of $\mathbb{R}^2$ and $\eta\in C^\infty_c((-1,1))$ such that $\eta\equiv 1$ on $(-\frac{1}{2},\frac{1}{2})$.
Set
\begin{align*}
g_t(re^{i\theta})=\frac{e^{i\theta}+t\eta(r) e_1}{\lvert e^{i\theta}+t\eta(r) e_1\rvert}.
\end{align*}
For $t$ sufficiently small there holds $g_t=ge^{i\psi_t}$, where
\begin{align*}
\psi_t(re^{i\theta})=-\arctan\left(\frac{t\eta(r)\sin\theta}{1+t\eta(r)\cos\theta}\right).
\end{align*}
Therefore
\begin{align*}
g_t^{-1}\nabla g_t=g^{-1}\nabla g+i\nabla\psi_t.
\end{align*}
%
Since for our choice of $g$ we have $b_g\equiv 0$, there holds $b_{g_t}=\psi_t$. But $\psi_t\notin L^2(D^2)$ for $t\neq 0$ (as $\psi_t$ only depends on $\theta$ in a neighbourhood of zero), therefore $\mathcal{E}(g_t)=\infty$ whenever $t\neq 0$.\\
By requiring $g$ to have isolated topological singularities in $D^2$ and considering variations as above with $\phi$ supported away from the topological singularities of $g$ one can obtain a result analogous to Lemma \ref{Lemma: Euler Lagrange Equation for smooth variations in the target}.
\end{rem}

\section{Applications}
\reset
\subsection{The Lagrangian-Willmore Energy}
\subsubsection{The Hamiltonian stationary condition and Schoen-Wolfson isolated singularities}\label{The hamiltonian stationary condition and Schoen-Wolfson isolated singularities}
Let 
\[
\om:=dx_1\wedge dy_1+dx_2\wedge dy_2
\]
be the standard symplectic form in ${\R}^4$ compatible with the standard complex structure $J_0$ such that $J_0\,\p_{x_k}=\p_{y_k}$ . We shall denote $dz_k=dx_k+i\,dy_k$. An immersion $G$ from a surface $\Sigma$ into $({\R}^4,\om)$
is called Lagrangian if 
\[
G^\ast\om=0\ .
\]
This condition is equivalent to the fact that $J_0$ realizes an isometry between the tangent plane to the immersion and the normal plane. A short computation gives the existence of
a map ${\frak g}$ from $\Sigma$ into $S^1$ such that
\[
G^\ast dz_1\wedge dz_2 = {\frak g}\ dvol_G
\]
where $dvol_G$ denotes the volume form associated to the induced metric $G^\ast g_{{\R}^4}$. The map ${\frak g}$ is called the {\it Lagrangian angle function}.

Consider conformal coordinates  for $G$ with respect to $G^\ast g_{{\R}^4}$, with $G^\ast g_{{\R}^4}= e^{2\la}\ dx^2$; then a direct computation gives that
\[
i\,\Delta G= {i}^{-1}\,{\frak g}^{-1}\nabla{\frak g}\cdot\nabla G\quad\Longleftrightarrow\quad\mbox{div}\lf({\frak g}\nabla G\rg)=0.
\]
We also deduce the following expression for the mean curvature vector
\be
\label{I.20}
\vec{H}_G:=2^{-1}\,e^{-2\la}\,\Delta G =-2^{-1}\,e^{-2\la}\,{\frak g}^{-1}\nabla{\frak g}\cdot\nabla G.
\ee
The vector-fields preserving the Lagrangian condition infinitesimally in the ambient space ${\R}^4$ are called {\it Hamiltonian vector fields} and are of the form
\[
X=J_0\,\nabla^{{\R}^4} f=(-\p_{y_1} f,\p_{x_1} f,-\p_{y_2} f,\p_{x_2}f),
\]
where $f$ is an arbitrary function.  Thus the condition for being a critical point of the area under local perturbations preserving the Lagrangian condition (the so called {\it Hamiltonian stationary condition} introduced originally by Oh \cite{Oh1})
is given by
\be
\label{I.22}
\forall f\in C^\infty_0({\R}^4)\quad 0=\int_\Sigma J_0\nabla^{{\R}^4} f(G)\cdot \vec{H}_G\ dvol_G= \int_\Sigma <G^\ast df, i^{-1}\,{\frak g}^{-1}\,d{\frak g}>_{g_G}\ dvol_G.
\ee
Assume that $G$ is a smooth local immersion. Then, locally, every differential $d\varphi$  can be written in the form $d\varphi= G^\ast df$ for some Hamiltonian $f$. Therefore the {\it Hamiltonian Stationary Equation} is
equivalent to the {\it $S^1$-harmonic map equation} with respect to the $G^\ast g_{{\R}^4}$ metric on $\Sigma$ :
\be
\label{I.23}
d^\ast \lf({\frak g}^{-1} d{\frak g}\rg)=0\ ,
\ee
where again the Hodge operation $\ast$ is the one given by the complex structure on $\Sigma$ induced by $G^\ast g_{{\R}^4}$. Observe that  $\frak g=e^{i\,\theta_0}$ is a constant $S^1-$harmonic map if and only if 
$G$ is minimal and Lagrangian (i.e. $\vec{H}_G=0$), which is also equivalent to the fact that $G$ is {\it calibrated} by the form 
\be
\label{I.24}
\Om:=e^{-i\,\theta_0} dz_1\wedge dz_2
\ee
and $G$ realizes a so called {\it special Lagrangian immersion}.

All the above up to (\ref{I.23}) extends  to the general case of a K\"ahler-Einstein Surface and up to (\ref{I.24}) to the general case of a K\"ahler-Einstein Surface with trivial {\it canonical bundle}: for which there exists
a global nowhere vanishing holomorphic  $(2,0)$ form (Calabi-Yau surfaces).

In their pioneering work on the analysis of {\it Hamiltonian Stationary Maps} R. Schoen and J. Wolfson aimed at constructing in any {\it integral Lagrangian homology class} (or also in the slightly more restrictive constraint of any Hamiltonian isotopy class \cite{Oh0}) of a closed K\"ahler-Einstein manifold a Lagrangian surface
minimizing the area in this class. The main contribution of \cite{SW} is the successful implementation of the minimization procedure. They proved that any such a class is realized  by a smooth minimal immersion away from isolated singular  points and that (\ref{I.23}) holds. The singularities however can be ``worse'' than classical branched points which are common for minimal surfaces and which are also present of course. For instance, at such singularities, the Gauss map of the immersion cannot be extended smoothly and it is proved in \cite{SW} that these singularities (called ``Schoen-Wolfson cones'') correspond to singularities around which the $S^1$-harmonic map has non-zero degree (in fact $+1$ or $-1$ degree) and coincide exactly with the singularities of ${\frak g}$.
In \cite{Wo}, Wolfson is giving examples of Lagrangian spherical integer homology classes whose $S^2$ Lagrangian area minimizers are not minimal and hence the associated $S^1-$harmonic maps ${\frak g}$ must have singularties and are only in $W^{1,(2,\infty)}(S^2, S^1)$ (and not in $W^{1,2}(S^2,S^1)$). 

\subsubsection{The Renormalized Lagrangian Willmore Energy}
From (\ref{I.20}) we deduce that the {\it Willmore Energy} for  a Lagrangian immersion $G$ into a K\"ahler-Einstein surface  is  given by
\be
\label{I.25}
W(G):=\frac{1}{4}\int_\Sigma |d {\frak g}|^2_{g_G}\ dvol_G\ .
\ee
This is just $1/2$ of the Dirichlet energy of the {\it Lagrangian angle function} $\frak g$.

From the previous subsection it is clear that for any {\it Hamiltonian stationary surface} in a Calabi-Yau 2-fold which is not {\it special Lagrangian} the {\it Willmore energy} is infinite
\[
W(G)=+\infty
\]
and there is an obvious need to renormalize it.

 Following the first part of the paper we introduce the {\it Renormalized Lagrangian-Willmore Energy}. Let $G$ be a Lagrangian map from a closed surface $\Sigma$ into a K\"ahler-Einstein surface $N$ realizing a Lipschitz weak immersion (in the sense introduced in \cite{Riv}) away from point singularities with an underlying smooth conformal structure $h$ and in such a way that $\frak g$ is in $W^{1,p}(\Sigma,S^1)$ for some $p>1$ and has finitely many topological singularities. We call such an immersion a {\it singular immersion}.
Let $a_{\frak g}$, $b_{\frak g}$ and ${\frak h}_{\frak g}$ be such that
\be
\label{I.26}
-i{\frak g}^{-1}\, d{\frak g}=\ast da_{\frak g}+db_{\frak g}+{\frak h}_{\frak g},
\ee
where ${\frak h}_{\frak g}$ realizes an harmonic one form.  Following the first part of paper we introduce the {\it Renormalized Lagrangian-Willmore Energy}.
\be
\label{I.27}
{\mathcal W}(G):=\frac{1}{4}\int_\Sigma |du_{\frak g}|^2_h+|db_{\frak g}|_h^2+|{\frak h}_{\frak g}|^2_h\ dvol_h\ ,
\ee
 where $u_{\frak g}$ is the ``$S^2$ lift'' of $\frak g$ introduced in Definition~\ref{df-I.1}. Such a singular Lagrangian immersion being given, we define the {\it  singular Lagrangian degree} of $G$ to be 
 \[
 \mbox{deg}_{Lag}(G)=\mbox{deg}(u_{\frak g})\ .
 \]
 From the previous we deduce the following Proposition. 
 \begin{Prop}
 \label{pr-V.1}
 Let $G$ be a singular Lagrangian immersion of $S^2$ into a K\"ahler-Einstein manifold. The map $G$ is Hamiltonian stationary if and only if $b_{\frak g}\equiv0$ and we have then
 \[
 {\mathcal W}(G)=2\pi\, \operatorname{deg}_{Lag}(G)\ .
 \]
 \hfill $\Box$
 \end{Prop}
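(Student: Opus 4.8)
The plan is to treat the two assertions separately: first the characterization of the Hamiltonian stationary condition, then the evaluation of the energy. The key structural simplification comes from the domain being $S^2$: since $S^2$ is simply connected, $H^1_{dR}(S^2)=0$ and the harmonic one-form ${\frak h}_{\frak g}$ in the Hodge decomposition (\ref{I.26}) vanishes identically. Hence (\ref{I.26}) reduces to $-i{\frak g}^{-1}d{\frak g}=\ast da_{\frak g}+db_{\frak g}$ and the renormalized energy (\ref{I.27}) becomes $\mathcal W(G)=\frac14\int_{S^2}(|du_{\frak g}|^2_h+|db_{\frak g}|^2_h)\,dvol_h$, which is the exact analogue on $(S^2,h)$ of the energy $\mathcal E$ studied in the first part of the paper.

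For the first equivalence I would recall from the previous subsection (equation (\ref{I.23})) that $G$ is Hamiltonian stationary if and only if ${\frak g}$ is an $S^1$-harmonic map, i.e. $d^\ast({\frak g}^{-1}d{\frak g})=0$ with respect to the conformal structure $h$ induced by $G^\ast g_{{\R}^4}$. All the computations in Lemma \ref{Lemma: Equivalent conditions g S^1 harmonic} are local and conformally natural, so they transcribe verbatim to $(S^2,h)$: since $d^\ast(\ast da_{\frak g})=0$ automatically, the $S^1$-harmonicity of ${\frak g}$ is equivalent to $\Delta_h b_{\frak g}=d^\ast db_{\frak g}=0$. On the closed surface $S^2$ a harmonic function is constant, so $db_{\frak g}=0$, that is $b_{\frak g}\equiv0$; conversely $b_{\frak g}\equiv0$ forces ${\frak g}^{-1}d{\frak g}=i\ast da_{\frak g}$ to be co-closed, hence ${\frak g}$ to be $S^1$-harmonic. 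This settles the first claim.

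It then remains to evaluate $\mathcal W(G)$ when $b_{\frak g}\equiv0$, in which case $\mathcal W(G)=\frac14\int_{S^2}|du_{\frak g}|^2_h\,dvol_h$. By the last part of Theorem \ref{th-I.2} (again transcribed to $(S^2,h)$) the vanishing of $b_{\frak g}$ is equivalent to the conformality of $u_{\frak g}$; moreover, exactly as in (\ref{I.9})--(\ref{I.12}), in any local conformal coordinate the map $e^{a_{\frak g}}{\frak g}$ is holomorphic, so $u_{\frak g}=\pi^{-1}(e^{a_{\frak g}}{\frak g})$ is an orientation-preserving conformal (holomorphic) map $S^2\to S^2$. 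For such a map the pointwise identity $\frac14|du_{\frak g}|^2_h\,dvol_h=\frac12\,u_{\frak g}^\ast dvol_{S^2}$ holds (in two dimensions a conformal differential satisfies $|du_{\frak g}|^2_h=2\,J_{u_{\frak g}}$, with $J_{u_{\frak g}}>0$), whence
\[
\mathcal W(G)=\frac14\int_{S^2}|du_{\frak g}|^2_h\,dvol_h=\frac12\int_{S^2}u_{\frak g}^\ast dvol_{S^2}=\frac12\cdot 4\pi\,\deg(u_{\frak g})=2\pi\,\operatorname{deg}_{Lag}(G).
\]

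The main obstacle is to justify rigorously the degree formula $\int_{S^2}u_{\frak g}^\ast dvol_{S^2}=4\pi\deg(u_{\frak g})$ in the presence of the finitely many topological singularities of ${\frak g}$: a priori $u_{\frak g}$ is only a $W^{1,2}$ map with a finite singular set, and one must show that these singular points are removable, so that $u_{\frak g}$ extends to a genuine smooth holomorphic self-map of $S^2$ with well-defined integer degree (this is precisely the point anticipated in the footnote to Definition \ref{df-I.1}). Once removability is established, the holomorphic lift has positive degree and the energy--area identity holds without boundary contributions, yielding the stated value. The remaining verifications --- that the $D^2$-based Lemma \ref{Lemma: Equivalent conditions g S^1 harmonic} and Theorem \ref{th-I.2} carry over to the closed surface $(S^2,h)$ --- are routine, as every relevant computation is local and invariant under conformal change of the domain metric.
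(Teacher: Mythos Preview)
Your proposal is correct and follows precisely the route the paper intends: the proposition is stated there without proof (merely ``From the previous we deduce the following Proposition''), and you have spelled out exactly the implicit argument --- vanishing of the harmonic form on $S^2$, the equivalence (\ref{I.23}) with $S^1$-harmonicity, Lemma~\ref{Lemma: Equivalent conditions g S^1 harmonic} for the characterization $b_{\frak g}\equiv 0$, and the holomorphicity of $u_{\frak g}$ together with the degree computation from the Introduction for the energy identity. Your identification of the removability of the singularities of $u_{\frak g}$ as the one genuine technical point is also on target; this is handled in the paper by the observation at the end of the proof of Lemma~\ref{Lemma: Equivalent conditions g S^1 harmonic} (composing with the two stereographic projections).
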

 We propose the following open problems.
 \begin{oq}
 Study the sequential weak closure of singular weak Lagrangian immersions under Renormalized Lagrangian-Willmore Energy control in the spirit of \cite{Riv}.
 \end{oq}
 Observe that thanks to inequality (\ref{I.18}) (or more exactly its counterpart in the Lagrangian immersion framework) the control of the point singularities is guaranteed by the control of the {\it Renormalized Lagrangian-Willmore Energy}.
 \begin{oq}
 Study the minimization of the Renormalized Lagrangian-Willmore Energy among  singular weak Lagrangian immersions in Hamiltonian isotopy classes. 
 \end{oq}
 
\subsection{Frame Energy}
Let $\vec{\Phi}$ be an immersion of a torus $\Sigma=T^2$ in ${\R}^n$ and let ${\frak e}:=(\vec{e}_1,\vec{e}_2)$ be an associated tangent frame, that is ${\frak e}(x)$ forms an orthonormal basis of
$\vec{\Phi}_\ast T_x\Sigma$. The associated frame energy is just the Dirichlet energy of ${\frak e}$ :
\be
\label{I.28}
F(\vec{\Phi},{\frak e}):=\frac{1}{2}\int_{T^2}|d\vec{e}_1|^2_{g_{\vec{\Phi}}}+|d\vec{e}_2|^2_{g_{\vec{\Phi}}}\ dvol_{g_{\vec{\Phi}}}.
\ee
It has been originally introduced in \cite{Top}. Its minimization in isotopy classes of immersions has been performed in \cite{MR} while its link with the Alvarez-Polyakov anomaly has been established in \cite{Riv1}.
For a fixed immersion the optimal frame satisfies the {\it Coulomb condition}
\be
\label{I.29}
d^\ast(\vec{e}_1\cdot d\vec{e}_2)=0.
\ee
Observe that the passage from one such a frame to another {\it Coulomb frame} $(\vec{f}_1,\vec{f}_2)$ is achieved through an $S^1$-harmonic map $g$ given by
\be
\label{I.30}
\vec{e}_1\cdot d\vec{e}_2=\vec{f}_1\cdot d\vec{f}_2-ig^{-1}\,dg\ .
\ee
It is clear that for a closed oriented surface $\Sigma$ of genus different from $1$ any such a frame has singularities and its Dirichlet energy is infinite
\[
F(\vec{\Phi},{\frak e})=+\infty.
\]
Thus there is an obvious need to renormalize the frame energy. 

The renormalization  we are proposing corresponds to a ``bundle version'' of the passage from $E$ to ${\mathcal E}$ in the Introduction. Let $\vec{e}_1$ be a (possibly singular at isolated points)
section of the unit tangent bundle to the immersion $\vec{\Phi}$ of a simply connected\footnote{We are avoiding harmonic forms in this first presentation.} closed oriented smooth surface $\Sigma$.
Assume that
\begin{align*}
\vec{e}_2\cdot d\vec{e}_1\in L^p(\Sigma)
\end{align*}
for some $p>1$.
Let $a_\mathfrak{e},b_\mathfrak{e}\in W^{1,p}(\Sigma)$ such that
\begin{align}\label{Equation: Hodge decomposition for frames}
\begin{cases}
\vec{e}_2\cdot d\vec{e}_1=\ast d a_\mathfrak{e}+db_\mathfrak{e}\text{ in }\Sigma\\[5mm]
\int_{\Sigma}a_\mathfrak{e}=0\\[5mm]
\int_{\Sigma}b_\mathfrak{e}=0.
\end{cases}
\end{align}
Let
\begin{align*}
v_\mathfrak{e}:= e^{a_\mathfrak{e}}\vec{e}_1\text{ on }D^2.
\end{align*}
Then $v_\mathfrak{e}$ defines a section of the tangent bundle of $\Sigma$.
We remark that the section $v_\mathfrak{e}$ has the following representation in charts: let
\begin{align*}
\phi: \Omega\to U
\end{align*}
be a conformal diffeomorphism from a domain $\Omega\subset\mathbb{C}$ to an open subset $U\subset\Sigma$. For $i\in \{1,2\}$ let
$$e_i:=\phi^\ast \vec{e}_i.$$
Then, in the standard metric of $\Omega$,
$$\lvert e_1\rvert=e^{-\lambda},$$
where $\lambda$ denotes the conformal factor of the map $\phi$.
Therefore the Hodge decomposition in (\ref{Equation: Hodge decomposition for frames}) takes the form
\begin{align*}
e^{\lambda}e_2\cdot d(e^{\lambda}e_1)=e^{2\lambda}e_2\cdot d e_1=\ast d(a_\mathfrak{e}-\lambda)+d b_\mathfrak{e}.
\end{align*}
Here we can see how $e^{\lambda}e_1$ corresponds heuristically to the function $g$ studied above.\\
We define the renormalized frame energy of the frame $\mathfrak{e}$ as follows:
\begin{align}\label{Equation: Definition of renormalized frame energy}
\mathcal{F}(\mathfrak{e}):=\int_{\Sigma}\frac{e^{2a_\mathfrak{e}}}{(1+e^{2a_\mathfrak{e}})^2}\lvert D v_\mathfrak{e}\rvert_{g_{\vec{\Phi}}}^2+\frac{1}{4}\int_{\Sigma}\lvert db_\mathfrak{e}\rvert_{g_{\vec{\Phi}}}^2.
\end{align}
Here $D$ denotes the covariant derivative of $\Sigma$.\\
Notice that the weight in the first integral of (\ref{Equation: Definition of renormalized frame energy}) corresponds to the one generated by the differential of an inverse stereographic projection on the tangent planes.\\
Notice also that the energy $\mathcal{F}$ depends on the immersion $\vec{\Phi}$ only through the metric on $\Sigma$ induced by $\vec{\Phi}$. In particular the energy $\mathcal{F}$ is an intrinsic object.\\
In the following we assume that the frame $\mathfrak{e}$ satisfies the equation:
\begin{align*}
\Delta_g a_\mathfrak{e}=2\pi\sum_{i=1}^Qd_i\delta_{p_i},
\end{align*}
where $Q\in \mathbb{N}$ and for any $i\in \{1,...,Q\}$ $d_i\in \mathbb{Z}\smallsetminus\{0\}$, $p_i\in \Sigma$. We call the points $p_1,...,p_Q$ \textit{topological singularities} of $\mathfrak{e}$.\\
With the methods presented above one proves the following Lemma.
\begin{lem}
Let $\Sigma$ and $\mathfrak{e}$ be as above, assume that
\begin{align*}
\mathcal{F}(\mathfrak{e})<\infty.
\end{align*}
Then the following conditions are equivalent
\begin{enumerate}
\item $\mathfrak{e}$ is a critical point of the energy $\mathcal{E}$, in the sense that
\begin{align*}
\forall \psi\in C^\infty(\Sigma,\mathbb{R})\quad \frac{d}{dt}\bigg\vert_{t=0}\mathcal{E}(e^{it\psi}\mathfrak{e})=0,
\end{align*}
\item $\mathfrak{e}$ satisfies the Coulomb condition, i.e. $b_\mathfrak{e}\equiv 0$,
\item the section $v_\mathfrak{e}$ is holomorphic outside of the topological singularities of $\mathfrak{e}$.
\end{enumerate}
\end{lem}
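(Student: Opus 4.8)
The plan is to obtain the three equivalences by transporting the two disc lemmas to the tangent–bundle setting: Lemma~\ref{Lemma: Explicit computation for the energy} together with Lemma~\ref{Lemma: Euler Lagrange Equation for smooth variations in the target} will give $(1)\Leftrightarrow(2)$, and Lemma~\ref{Lemma: Equivalent conditions g S^1 harmonic} will give $(2)\Leftrightarrow(3)$. The only genuinely new feature is the conformal factor $\lambda$ of the induced metric, which did not appear in the flat $S^1$-valued model. First I would record the explicit form of $\mathcal{F}$. Since $\vec{e}_1$ is a unit section we have $\vec{e}_1\cdot D\vec{e}_1=0$ and $|D\vec{e}_1|^2_{g}=|\vec{e}_2\cdot d\vec{e}_1|^2_{g}=|\ast da_{\mathfrak{e}}+db_{\mathfrak{e}}|^2_{g}$, so expanding $|Dv_{\mathfrak{e}}|^2$ exactly as in Lemma~\ref{Lemma: Explicit computation for the energy} puts $\mathcal{F}$ in the form
$$\mathcal{F}(\mathfrak{e})=\int_\Sigma f(a_{\mathfrak{e}})\left(|\ast da_{\mathfrak{e}}+db_{\mathfrak{e}}|^2_{g}+|da_{\mathfrak{e}}|^2_{g}\right)dvol_{g}+\frac14\int_\Sigma|db_{\mathfrak{e}}|^2_{g},$$
which is the precise analogue of (\ref{Equation: Explicit form of Energy in terms of a,b; 2}).

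For $(1)\Leftrightarrow(2)$ I would follow Lemma~\ref{Lemma: Euler Lagrange Equation for smooth variations in the target} almost verbatim. Rotating the frame by $\mathfrak{e}_t=e^{it\psi}\mathfrak{e}$ replaces $\vec{e}_2\cdot d\vec{e}_1$ by $\vec{e}_2\cdot d\vec{e}_1+t\,d\psi$, hence leaves $a_{\mathfrak{e}}$ unchanged and sends $b_{\mathfrak{e}}\mapsto b_{\mathfrak{e}}+t\psi$ up to the zero-average normalisation. Differentiating the explicit energy at $t=0$ produces the Euler--Lagrange equation $\operatorname{div}\left(f(a_{\mathfrak{e}})\,\vec{e}_2\cdot d\vec{e}_1+\tfrac14\,db_{\mathfrak{e}}\right)=0$; substituting the Hodge decomposition (\ref{Equation: Hodge decomposition for frames}) and using $\operatorname{div}(f(a_{\mathfrak{e}})\ast da_{\mathfrak{e}})=0$ (the closed-surface analogue of (\ref{Equation: div(f(a)nablaa)=0}), valid since $f(a_{\mathfrak{e}})\ast da_{\mathfrak{e}}=\ast d(F\circ a_{\mathfrak{e}})$ with $F'=f$) collapses it to $\operatorname{div}\left((f(a_{\mathfrak{e}})+\tfrac14)\,db_{\mathfrak{e}}\right)=0$. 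Testing this against $b_{\mathfrak{e}}$ itself on the closed surface $\Sigma$ gives $\int_\Sigma(f(a_{\mathfrak{e}})+\tfrac14)|db_{\mathfrak{e}}|^2_{g}=0$, whence $b_{\mathfrak{e}}\equiv0$; conversely $b_{\mathfrak{e}}\equiv0$ makes the first variation vanish, so $(1)\Leftrightarrow(2)$.

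The substantive step is $(2)\Leftrightarrow(3)$, where the tangent bundle must be treated as a holomorphic line bundle and $\lambda$ enters. Working in a conformal chart $\phi:\Omega\to U$ with $g=e^{2\lambda}|dz|^2$, I would write $\vec{e}_1=e^{-\lambda}(\cos\alpha\,\partial_x+\sin\alpha\,\partial_y)$ for an angle function $\alpha$; the $(1,0)$-part of $v_{\mathfrak{e}}=e^{a_{\mathfrak{e}}}\vec{e}_1$ is then $e^{a_{\mathfrak{e}}-\lambda+i\alpha}\,\partial_z$, so $v_{\mathfrak{e}}$ is a holomorphic section if and only if $\partial_{\bar z}\!\left(a_{\mathfrak{e}}-\lambda+i\alpha\right)=0$, i.e. $d\alpha=\ast d(a_{\mathfrak{e}}-\lambda)$. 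On the other hand the spin connection of the conformal metric gives $\vec{e}_2\cdot d\vec{e}_1=d\alpha+\ast d\lambda$, so the Coulomb condition $b_{\mathfrak{e}}\equiv0$, that is $\vec{e}_2\cdot d\vec{e}_1=\ast da_{\mathfrak{e}}$, reads exactly $d\alpha=\ast d(a_{\mathfrak{e}}-\lambda)$. The two conditions coincide, and at the topological singularities $p_i$ the section $v_{\mathfrak{e}}$ has at worst an isolated singularity which is removable, exactly as the singularities of $u_g$ are removed in Lemma~\ref{Lemma: Equivalent conditions g S^1 harmonic}, so holomorphicity off the $p_i$ is precisely statement $(3)$.

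The hard part is the bookkeeping in this last step: one must verify that the $-\lambda$ appearing in the holomorphic frame $e^{a_{\mathfrak{e}}-\lambda+i\alpha}\partial_z$ (present because $v_{\mathfrak{e}}$ lives in $T\Sigma$ rather than in a trivialised $S^1$) is cancelled by the identical $\ast d\lambda$ term of the spin connection, so that the Coulomb condition becomes holomorphicity with no residual curvature term. This cancellation is exactly what makes $\mathcal{F}$ intrinsic, and it is the only place where the passage from the flat disc to a curved surface is not purely formal; I would carry out the structure-equation computation $\vec{e}_2\cdot d\vec{e}_1=d\alpha+\ast d\lambda$ with care for the sign conventions of $\ast$. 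The removability of the isolated singularities, and the justification of the integrations by parts under the standing hypotheses $\mathcal{F}(\mathfrak{e})<\infty$ and $\vec{e}_2\cdot d\vec{e}_1\in L^p$, are routine adaptations of the corresponding arguments from Sections~II--IV.
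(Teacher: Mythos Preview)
Your proposal is correct and follows exactly the route the paper intends: the paper gives no explicit proof of this lemma, stating only that it is proved ``with the methods presented above,'' i.e.\ by transporting Lemma~\ref{Lemma: Equivalent conditions g S^1 harmonic} and Lemma~\ref{Lemma: Euler Lagrange Equation for smooth variations in the target} to the bundle setting, which is precisely what you do.

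Your treatment of $(2)\Leftrightarrow(3)$ makes explicit the one point the paper only hints at. The paper remarks, just before defining $\mathcal{F}$, that in a conformal chart the decomposition (\ref{Equation: Hodge decomposition for frames}) reads $e^{\lambda}e_2\cdot d(e^{\lambda}e_1)=\ast d(a_{\mathfrak e}-\lambda)+db_{\mathfrak e}$ and that ``$e^{\lambda}e_1$ corresponds heuristically to the function $g$.'' Your computation $\vec{e}_2\cdot d\vec{e}_1=d\alpha+\ast d\lambda$ and the identification of the $(1,0)$-part of $v_{\mathfrak e}$ with $e^{a_{\mathfrak e}-\lambda+i\alpha}\partial_z$ unpack this hint completely and show that the two occurrences of $\lambda$ cancel, so that the Coulomb condition is exactly holomorphicity of $v_{\mathfrak e}$; this is the right argument and the signs are consistent with the convention $\ast dx=dy$. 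For $(1)\Leftrightarrow(2)$ your testing of the Euler--Lagrange equation against $b_{\mathfrak e}$ is equivalent to the strict-convexity argument used in Lemma~\ref{Lemma: Euler Lagrange Equation for smooth variations in the target}, adapted from $W^{1,2}_0(D^2)$ to zero-average $W^{1,2}(\Sigma)$.
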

If $\Sigma$ is not simply connected there is a unique harmonic 1-form $h_{\frak e}$ such that
\[
\vec{e}_2\cdot d\vec{e_1}=\ast da_{\frak e} +db_{\frak e}+h_{\frak e}\
\]
and the renormalized frame energy becomes
\begin{align}\label{Equation: Renormalized energy for generalized manifolds}
\mathcal{F}(\mathfrak{e}):=\int_{\Sigma}\frac{e^{2a_\mathfrak{e}}}{(1+e^{2a_\mathfrak{e}})^2}\lvert D v_\mathfrak{e}\rvert_{g_{\vec{\Phi}}}^2+\frac{1}{4}\int_{\Sigma}\left(\lvert db_\mathfrak{e}\rvert_{g_{\vec{\Phi}}}^2+\lvert h_\mathfrak{e}\rvert_{g_{\vec{\Phi}}}^2\right).
\end{align}
If moreover $\Sigma$ has a boundary, one could define a renormalized frame energy in the same spirit as (\ref{Equation: Renormalized energy for generalized manifolds}) by means of an $L^p$-Hodge decomposition for manifolds with boundary (see Corollary 10.5.1 in \cite{Iwaniec}).\\
It would be interesting to study the following question.
\begin{oq}
Consider an immersion $\vec{\Phi}$ of a an oriented surface with boundary and study the link between the minimal  renormalized frame energy among all tangent frames such that the first vector is tangent to the boundary
and the Alvarez-Polyakov anomaly associated to this immersion.\hfill $\Box$
\end{oq}
\section{Appendix: some auxiliary results}
\reset

\begin{lem}\label{Lemma: Hodge decomposition in L2,1}
Let $p\in (1,\infty)$, $q\in [1,\infty]$, let $X\in L^{p,q}(D^2)$.
Then there exists $\eta\in W^{1, (p,q)}(D^2)$, $\xi\in W^{1, (p,q)}_0(D^2)$ so that
$$X=\nabla \eta+\nabla^\perp \xi\text{ in }D^2$$
and
$$\lVert \eta\rVert_{W^{1, (p,q)}}\leq C\lVert X\rVert_{L^{p,q}},\quad\lVert \xi\rVert_{W^{1, (p,q)}}\leq C\lVert X\rVert_{L^{p,q}}$$
for some constant $C$ independent from $X$.
\end{lem}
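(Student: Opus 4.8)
The plan is to absorb the rotational part of $X$ into $\nabla^\perp\xi$ and then recognize the remainder as a gradient. Writing $\nabla^\perp\cdot=(-\partial_{x_2}\cdot,\partial_{x_1}\cdot)$ as in the paper and $\operatorname{curl}(Y):=\partial_{x_1}Y_2-\partial_{x_2}Y_1$, one has $\operatorname{div}(\nabla^\perp\xi)=0$ and $\operatorname{curl}(\nabla^\perp\xi)=\Delta\xi$, so a decomposition $X=\nabla\eta+\nabla^\perp\xi$ forces $\operatorname{curl}(X)=\Delta\xi$. First I would therefore determine $\xi$ as the weak solution of the Dirichlet problem
\begin{align*}
\begin{cases}
\Delta\xi=\operatorname{curl}(X) & \text{ in }D^2\\[2mm]
\xi=0 & \text{ on }\partial D^2,
\end{cases}
\end{align*}
which in weak form reads $\int_{D^2}\nabla\xi\cdot\nabla\phi=\int_{D^2}X\cdot\nabla^\perp\phi$ for every test function $\phi$. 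Observing that $\operatorname{curl}(X)=-\operatorname{div}(X^\perp)$ with $X^\perp:=(-X_2,X_1)$, this is a Dirichlet problem with right-hand side in divergence form.

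For existence I would use that on the bounded domain $D^2$ one has the embedding $L^{p,q}(D^2)\hookrightarrow L^{p_0}(D^2)$ for any $1<p_0<p$; hence $X^\perp\in L^{p_0}$ and the classical $L^{p_0}$-theory for the Dirichlet Laplacian produces a unique $\xi\in W^{1,p_0}_0(D^2)$. The crucial point is then the a priori estimate $\lVert\nabla\xi\rVert_{L^{p,q}}\le C\lVert X\rVert_{L^{p,q}}$. The solution operator $T\colon F\mapsto\nabla\xi$, whose kernel is built from the second derivatives of the Green function of the disc, is a Calder\'on--Zygmund operator and hence is bounded on $L^r(D^2)$ for every $1<r<\infty$. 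Since $L^{p,q}=(L^{p_0},L^{p_1})_{\theta,q}$ is a real interpolation space for $1<p_0<p<p_1<\infty$ with $\frac1p=\frac{1-\theta}{p_0}+\frac{\theta}{p_1}$, the boundedness of $T$ at the two endpoints $p_0,p_1$ yields, by the real interpolation theorem, its boundedness on $L^{p,q}$ for every $q\in[1,\infty]$; this gives $\nabla\xi\in L^{p,q}(D^2)$ with the desired bound.

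With $\xi$ in hand I would set $Y:=X-\nabla^\perp\xi\in L^{p,q}(D^2)$ and note that $\operatorname{curl}(Y)=\operatorname{curl}(X)-\Delta\xi=0$ in $\mathcal D'(D^2)$. Since $D^2$ is simply connected, a curl-free $L^{p,q}$ vector field is a distributional gradient (the Poincar\'e lemma / triviality of the de Rham cohomology of the disc), so there is $\eta$ with $\nabla\eta=Y$ and $\lVert\nabla\eta\rVert_{L^{p,q}}\le\lVert X\rVert_{L^{p,q}}+\lVert\nabla^\perp\xi\rVert_{L^{p,q}}\le C\lVert X\rVert_{L^{p,q}}$. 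To pass from gradient bounds to full $W^{1,(p,q)}$-norms I would normalize $\eta$ to have zero average and invoke the Poincar\'e--Wirtinger inequality in Lorentz spaces to control $\lVert\eta\rVert_{L^{p,q}}$, and use the Poincar\'e inequality on $W^{1,(p,q)}_0(D^2)$ to control $\lVert\xi\rVert_{L^{p,q}}$; both follow from their $L^{p_0}$ counterparts by the same interpolation argument.

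The hard part will be the elliptic estimate in the Lorentz scale, i.e. the boundedness of the Dirichlet solution operator $T$ on $L^{p,q}(D^2)$. Everything else is soft: the passage through the auxiliary exponent $p_0$ for existence, the Poincar\'e lemma for the curl-free remainder, and the Poincar\'e/Poincar\'e--Wirtinger inequalities for the zeroth-order terms. The one place requiring care is making the Calder\'on--Zygmund structure of $T$ on the bounded domain precise --- either by working with the explicit Green function of the disc or by a reflection/extension argument reducing to the whole-plane Riesz transforms --- after which real interpolation closes the argument uniformly in $q\in[1,\infty]$.
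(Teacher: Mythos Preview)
Your proposal is correct and follows essentially the same strategy as the paper: solve a Dirichlet problem in divergence form for one of the two potentials, invoke the classical $L^r$-theory plus real interpolation to obtain the $L^{p,q}$ bound, and recover the other potential from the curl-free (resp.\ divergence-free) remainder via the Poincar\'e lemma on the simply connected disc. The only cosmetic difference is that the paper solves first for $\eta$ (with $\Delta\eta=\operatorname{div}X$, $\eta=0$ on $\partial D^2$) and then finds $\xi$, whereas you solve first for $\xi$ (with $\Delta\xi=\operatorname{curl}X$, $\xi=0$ on $\partial D^2$) and then find $\eta$; your choice is in fact the one consistent with the statement's requirement $\xi\in W^{1,(p,q)}_0(D^2)$, and your treatment of the interpolation step is more explicit than the paper's one-line appeal.
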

\begin{proof}
The result is well known for $L^p$ spaces. In particular, for fixed $p\in (1,\infty)$ there exists a continuous linear map from $L^p(D^2)$ to $W^{1,p}(D^2)$ sending a vector space $X$ to the solution $\eta$ of
$$\begin{cases}
\Delta \eta=\text{div}X&\text{ in }D^2\\[5mm]
\eta=0&\text{ on }\partial D^2.
\end{cases}$$
By interpolation, one can define an analogous map from $L^{p,q}(D^2)$ to $W^{1, (p,q)}(D^2)$.
Now given $X\in L^{p,q}(D^2)$ and $\eta$ as above, there holds
$$\text{div} (X-\nabla \eta)=0.$$
Therefore there exists $\xi\in W^{1, (p,q)}(D^2)$ such that $\nabla^\perp\xi=X-\nabla \eta$.
\end{proof}

\begin{lem}\label{Lemma: continuous extension from W11 to W 12infty}
Let $g_0\in W^{1,1}(\partial D^2, S^1)$. Then there exists an harmonic map $g\in W^{1, (2,\infty)}(D^2)$ with trace $g_0$ on $\partial D^2$, and the map sending $g_0\in W^{1,1}(\partial D^2, S^1)$ to $g\in W^{1, (2,\infty)}(D^2, S^1)$ is a bounded continuous map.
\end{lem}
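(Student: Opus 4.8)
The plan is to build $g$ explicitly as the exponential of the harmonic extension of a phase, reduce the $W^{1,(2,\infty)}$-bound to a single potential estimate, and then upgrade that estimate to continuity. First I would reduce to the case $\deg(g_0)=0$: writing $d=\deg(g_0)$ and $g_0=(z/|z|)^{d}\,\tilde g_0$ with $\deg(\tilde g_0)=0$, the vortex $(z/|z|)^{d}$ extends to $D^2$ with $|\nabla (z/|z|)^{d}|\le |d|/|z|$, whose $L^{2,\infty}(D^2)$-norm is $\le C|d|$, so it suffices to treat $\tilde g_0$. For a degree-zero datum I would pick a lift $\tilde g_0=e^{i\phi_0}$ with $\phi_0\in W^{1,1}(\partial D^2,\mathbb R)$ (so that $|\partial_\theta\phi_0|=|\partial_\theta\tilde g_0|$ a.e.), let $\phi$ be its harmonic extension, and set $g=e^{i\phi}$. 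Since $\phi$ is harmonic, $g^{-1}\nabla g=i\nabla\phi$ has vanishing divergence, so $g$ is a harmonic map into $S^1$ with trace $\tilde g_0$ (for $d\neq 0$ the full extension is harmonic away from the vortex); as $|g|\equiv 1$ on the bounded set $D^2$ we get $g\in L^\infty\subset L^{2,\infty}$, and $|\nabla g|=|\nabla\phi|$, so everything reduces to bounding $\nabla\phi$.

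The core of the argument is the estimate $\|\nabla\phi\|_{L^{2,\infty}(D^2)}\le C\|\partial_\theta\phi_0\|_{L^1(\partial D^2)}$. Writing $z=re^{i\theta}$ and differentiating the Poisson formula, after one integration by parts (legitimate because $\phi_0\in W^{1,1}$) the tangential derivative $\partial_\theta\phi$ is the Poisson integral of $\phi_0'$, with kernel $(1-r^2)|z-e^{it}|^{-2}$, while the radial derivative $r\partial_r\phi$ is the conjugate Poisson integral of $\phi_0'$, with kernel of size $|z-e^{it}|^{-1}$. Using $1-r\le |z-e^{it}|$ in the first kernel, both components are dominated (away from the harmless interior region $\{r\le 1/2\}$, where $\phi$ is controlled by $\|\phi_0'\|_{L^1}$ via interior estimates) by the boundary potential $I[\,|\phi_0'|\,](z):=\int_{\partial D^2}|z-e^{it}|^{-1}\,|\phi_0'(t)|\,dt$, giving the pointwise bound $|\nabla\phi(z)|\le C\,I[|\phi_0'|](z)$.

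I would then record that for each fixed boundary point the slice $z\mapsto |z-e^{it}|^{-1}$ lies in $L^{2,\infty}(D^2)$ with norm bounded independently of $t$, since $\{z\in D^2:|z-e^{it}|^{-1}>\lambda\}$ is contained in a disc of radius $\lambda^{-1}$ and hence has area $\le \pi\lambda^{-2}$. Equipping $L^{2,\infty}(D^2)$ with its equivalent genuine norm (Exercise 1.1.12 in \cite{Gra1}), Minkowski's integral inequality applies to the superposition defining $I$, giving
\[
\|\nabla\phi\|_{L^{2,\infty}(D^2)}\le C\int_{\partial D^2}\big\||\cdot-e^{it}|^{-1}\big\|_{L^{2,\infty}(D^2)}\,|\phi_0'(t)|\,dt\le C\|\partial_\theta\phi_0\|_{L^1(\partial D^2)}.
\]
Since $|\partial_\theta\phi_0|=|\partial_\theta\tilde g_0|\le |\partial_\theta g_0|+|d|$ a.e. and $\|\partial_\theta g_0\|_{L^1}\le \|g_0\|_{W^{1,1}}$, combining with the vortex bound yields $\|\nabla g\|_{L^{2,\infty}}\le C\|g_0\|_{W^{1,1}}$, which is boundedness.

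For continuity I would exploit that $g_0\mapsto\phi$ is linear: if $g_0^n\to g_0$ in $W^{1,1}(\partial D^2,S^1)$ then, since $W^{1,1}(\partial D^2)\hookrightarrow C^0$ forces $\deg(g_0^n)=\deg(g_0)$ for large $n$, the phases can be chosen with $\phi_0^n\to\phi_0$ in $W^{1,1}(\partial D^2)$ (fixing the additive constant), whence $\phi^n\to\phi$ in $W^{1,(2,\infty)}(D^2)$ by the estimate above. It then remains to pass from $\phi$ to $g=e^{i\phi}$: splitting $\nabla(e^{i\phi^n}-e^{i\phi})=i\,e^{i\phi^n}(\nabla\phi^n-\nabla\phi)+i\,(e^{i\phi^n}-e^{i\phi})\nabla\phi$, the first term has $L^{2,\infty}$-norm $\le\|\nabla\phi^n-\nabla\phi\|_{L^{2,\infty}}\to0$, and the second tends to $0$ in $L^{2,\infty}$ because $e^{i\phi^n}-e^{i\phi}\to0$ a.e. while $\nabla\phi$ is a fixed $L^{2,\infty}$ function (handled by truncating at $\{|\nabla\phi|>M\}$, whose $L^{2,\infty}$-tail is small, and using bounded convergence on the complement). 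The main obstacle is analytic rather than structural: establishing the $L^{2,\infty}(D^2)$ mapping property cleanly — which the Minkowski-in-the-Banach-norm trick resolves — and controlling convergence in the quasi-Banach space $L^{2,\infty}$, where smooth functions are not dense and the quasi-norm must be replaced by its equivalent norm at each step.
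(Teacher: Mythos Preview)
Your argument is correct and takes a genuinely different route from the paper's own proof. The paper does \emph{not} factor out the vortex: instead it takes a single $BV$ lift $\lambda(e^{i\theta})=-i\int_0^\theta g_0^{-1}\partial_\theta g_0$ (which has a $2\pi d$ jump at $\theta=0$ when $d\neq 0$), lets $\psi$ be its harmonic extension, and sets $g=g_0(1)e^{i\psi}$. The $W^{1,(2,\infty)}$ bound is obtained not by kernel estimates but by duality: for any test vector field $X\in L^{2,1}$ one Hodge-decomposes $X=\nabla\eta+\nabla^\perp\xi$ with $\eta\in W^{1,(2,1)}_0$, $\xi\in W^{1,(2,1)}$ (Lemma~\ref{Lemma: Hodge decomposition in L2,1}); harmonicity of $\psi$ kills the $\nabla\eta$ piece and the $\nabla^\perp\xi$ piece becomes a boundary integral $\int_{\partial D^2}\xi\,\partial_\theta\lambda$, bounded by $\|\xi\|_{L^\infty}\|\lambda\|_{BV}\le C\|X\|_{L^{2,1}}\|\lambda\|_{BV}$ via the embedding $W^{1,(2,1)}\hookrightarrow C^0$. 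Your approach trades this functional-analytic step for explicit Poisson/conjugate-Poisson kernel bounds together with Minkowski's inequality in the normed space $L^{2,\infty}$; this is more hands-on and avoids the auxiliary Hodge lemma, while the paper's proof is shorter once that lemma is in place and treats all degrees at once without the vortex splitting.

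One small point: in your continuity step, the assertion that the $L^{2,\infty}$-tail $\|\nabla\phi\cdot 1_{\{|\nabla\phi|>M\}}\|_{L^{2,\infty}}$ is small as $M\to\infty$ is not automatic, since $L^{2,\infty}$ does not have order-continuous norm (e.g.\ $|x|^{-1}$ on $D^2$ has tails of constant $L^{2,\infty}$ size). It is nevertheless true here because $\phi_0'\in L^1$ can be approximated in $L^1$ by bounded functions, and for bounded data your pointwise estimate gives $|\nabla\phi|\lesssim \log(1/(1-r))$, which lies in the separable part of $L^{2,\infty}$. The paper's proof leaves the analogous passage from $\psi$ to $e^{i\psi}$ equally implicit.
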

\begin{proof}
Let $g_0\in W^{1,1}(\partial D^2, S^1)$.
First we show that $g_0$ has a lift in $BV(\partial D^2, \mathbb{R})$ (up to an additive constant). In fact, for any $\theta\in [0, 2\pi]$ let
$$\phi(\theta)=-i\int_0^\theta g_0^{-1}(e^{i\alpha})\partial_\theta g_0(e^{i\alpha})d\alpha.$$
Set
$$\lambda(e^{i\theta})=\phi(\theta)$$
for any $\theta\in [0, 2\pi)$.
Then $\lambda\in BV(\partial D^2, \mathbb{R})$ with
\begin{align}\label{Equation: Estimate BV W11}
\lVert \lambda\rVert_{BV}\leq C\lVert g_0\rVert_{W^{1,1}}
\end{align}
and
$$g_0=g_0(1)e^{i\lambda}\text{ on }\partial D^2.$$
We claim that the map
\begin{align*}
W^{1,1}(\partial D^2, S^1)\to BV(\partial D^2, \mathbb{R}),\quad g_0\mapsto \lambda
\end{align*}
is continuous. In fact let $g_0\in W^{1,1}(\partial D^2, S^1)$ and let $(g_0^n)_{n\in \mathbb{N}}$ be a sequence in $W^{1,1}(\partial D^2, S^1)$ such that
$$g_0^n\to g_0\text{ in }W^{1,1}(\partial D^2).$$
Observe that by estimate (\ref{Equation: Estimate BV W11}) for any $n\in \mathbb{N}$
\begin{align*}
\lVert \lambda_{g_0^n}-\lambda_{g_0}\rVert_{BV}\leq C\lVert g_0^n g_0^{-1}\rVert_{W^{1,1}}.
\end{align*}
Now let $\Lambda\subset \mathbb{N}$ the index set of a subsequence of $(g_0^n)_{n\in \mathbb{N}}$ such that
\begin{align*}
g_0^n\to g_0\text{ a.e. along }\Lambda.
\end{align*}
Then by Dominated Convergence
\begin{align*}
\lim_{\substack{n\to\infty\\ n\in \Lambda}}\lVert g_0^n g_0^{-1}\rVert_{L^1}=0.
\end{align*}
Moreover for any $n\in \mathbb{N}$
\begin{align*}
\partial_\theta(g_0^ng_0^{-1})=\partial_{\theta}(g_0^n-g_0)g_0^{-1}+(g_0^n-g_0)\partial_\theta g_0^{-1},
\end{align*}
therefore
\begin{align}\label{Equation: Estimate in two parts for the W11 norm of difference}
\lVert \partial_\theta(g_0^ng_0^{-1})\rVert_{L^1}\leq \lVert \partial_\theta(g_0^n-g_0)\rVert_{L^1}+\lVert \lvert \partial_\theta g_0\rvert\lvert g_0^n-g_0\rvert\rVert_{L^1}
\end{align}
and by Dominated Convergence the right hand side of (\ref{Equation: Estimate in two parts for the W11 norm of difference}) converges to zero along $\Lambda$.
Thus for any subsequence of $(\lambda_{g_0^n})_{n\in \mathbb{N}}$ there exists a further subsequence along which
$$\lambda_{g_0^n}\to\lambda_{g_0}\text{ in }BV(\partial D^2).$$
This concludes the proof of the Claim.\\
Next we consider the following Cauchy problem:
\begin{align}\label{Equation: Cauchy problem for psi in BV}
\begin{cases}
\Delta \psi=0&\text{ in }D^2\\[5mm]
\psi=\lambda&\text{ on }\partial D^2.
\end{cases}
\end{align}
We claim that the solution $\psi$ to (\ref{Equation: Cauchy problem for psi in BV}) lies in $W^{1, (2,\infty)}(D^2)$.
To see this, assume first that $\lambda$ is a smooth function.
Now let $X$ be a vector field on $\overline{D^2}$ smooth up to the boundary, let $\eta\in W^{1, (2,1)}_0(D^2)$ and $\xi\in W^{1, (2,1)}(D^2)$ as in Lemma \ref{Lemma: Hodge decomposition in L2,1}, so that
$$ X=\nabla \eta+\nabla^\perp \xi$$
and
$$\lVert \xi\rVert_{W^{1, (2,1)}}\leq C\lVert X\rVert_{L^{2,1}}.$$
Then
\begin{align}\label{Equation: duality to show harmonic function in W121}
\int_{D^2} X \nabla\psi=\int_{D^2}\nabla \eta\nabla\psi+\int_{D^2}\nabla^\perp \xi\nabla\psi=\int_{D^2}\nabla^\perp \xi\nabla\psi=\int_{\partial D^2}\xi \partial_\theta\psi=\int_{\partial D^2}\xi \partial_\theta \lambda.
\end{align}
Here we used the fact that since $\eta$ has vanishing trace on $\partial D^2$ and $\psi$ is harmonic, 
$$\int_{D^2}\nabla \eta \nabla \psi=0.$$
We also used the fact that, by the divergence Theorem,
$$\int_{D^2}\nabla^\perp \xi\nabla\psi=-\int_{D^2}\text{div}(\xi\nabla^\perp\psi)=-\int_{\partial D^2}\xi\nabla^{\perp}\psi\cdot \nu=\int_{\partial D^2} \xi \partial_\theta \psi.$$
Since $W^{1, (2,1)}(D^2)$ embeds continuously in $L^\infty(D^2)$, it follows from (\ref{Equation: duality to show harmonic function in W121}) that
$$\lVert \nabla \psi\rVert_{L^{2,\infty}}\leq C \lVert \lambda\rVert_{BV}$$
and therefore\footnote{This can be shown by contradiction, just as in the classical proof of Poincar\'e inequality.}
\begin{align}\label{Equation: Estimate of psi through BV}
\lVert \psi\rVert_{W^{1,(2,\infty)}}\leq C\lVert \lambda\rVert_{BV}.
\end{align}
By density of the smooth functions in $BV(\partial D^2)$ the estimate extends by continuity to any $\lambda\in BV(\partial D^2)$.\\
Now given $g_0\in W^1(\partial D^2, S^1)$ let $\lambda_{g_0}$ and $\psi_{\lambda_{g_0}}$ as above.
Set
\begin{align*}
g:=g_0(1)e^{i\psi_{\lambda_{g_0}}}\text{ on }\partial D^2.
\end{align*}
Then by construction $g\vert_{\partial D^2}=g_0$,
\begin{align*}
\lVert \nabla g\rVert_{L^{2,\infty}}\leq C\lVert g_0\rVert_{W^{1,1}}
\end{align*}
and the prescription
$$g_0\mapsto g$$
defines a continuous map
\begin{align*}
W^{1,1}(\partial D^2, S^1)\to W^{1,(2,\infty)}(D^2, S^1).
\end{align*}
\end{proof}

\begin{lem}\label{Lemma: Pseudo Courant Lebesgue}
Let $R\in (0,1)$, let $(u_n)_{n\in \mathbb{N}}$ be a sequence of non-negative functions in $W^{1,1}(D^2\smallsetminus D^2_R)$ defined pointwise. Assume that
\begin{align*}
\sup_{n\in\mathbb{N}}\int_{D^2\smallsetminus D_R^2}u_n\leq C
\end{align*}
for some constant $C$.\\
Let $\varepsilon>0$. Then there exists a measurable subset $A\subset (R,1)$ with positive Lebesgue measure and a subsequence, say indexed by $\Lambda\subset \mathbb{N}$, such that
\begin{align*}
\int_{\partial D^2_R}u_n\leq C_\varepsilon:=\frac{C}{1-R}+\varepsilon
\end{align*}
for any $R\in A$, for any $n\in \Lambda$.
\end{lem}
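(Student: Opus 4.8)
The plan is to reduce the two-dimensional integral hypothesis to a one-dimensional statement about the circle-integrals $r\mapsto \int_{\partial D^2_r}u_n$, and then to combine Fatou's lemma with Chebyshev's inequality. First I would pass to polar coordinates. Writing
\[
I_n(r):=\int_{\partial D^2_r}u_n\,d\mathcal{H}^1\qquad (r\in(R,1)),
\]
Fubini's theorem (legitimate since $u_n\in W^{1,1}\subset L^1$, and reading $u_n$ through the pointwise representative for which the slicing is defined) guarantees that each $I_n$ is a non-negative measurable function of $r$ with
\[
\int_R^1 I_n(r)\,dr=\int_{D^2\smallsetminus D^2_R}u_n\,dx\le C .
\]
Hence the sequence $(I_n)_n$ is bounded in $L^1((R,1))$ by $C$, uniformly in $n$.

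Next I would control the pointwise lower envelope. Setting $I(r):=\liminf_{n\to\infty}I_n(r)$, Fatou's lemma gives $\int_R^1 I\,dr\le C$, so by Chebyshev's inequality the super-level set $\{r\in(R,1):I(r)>C_\varepsilon\}$ has measure at most $C/C_\varepsilon$. Since $C_\varepsilon=\tfrac{C}{1-R}+\varepsilon>\tfrac{C}{1-R}$ one computes $C/C_\varepsilon<1-R$, so the candidate set
\[
A:=\{r\in(R,1):I(r)\le C_\varepsilon\}
\]
satisfies $|A|\ge (1-R)-C/C_\varepsilon=(1-R)\,\dfrac{\varepsilon(1-R)}{C+\varepsilon(1-R)}>0$. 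This produces the required set of positive measure, and the deficit is strictly positive precisely because of the extra $\varepsilon$ in the definition of $C_\varepsilon$.

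Finally I would extract the subsequence. For each fixed $r\in A$ the definition of $I(r)$ as a $\liminf$ immediately yields indices $n$ along which $I_n(r)\le C_\varepsilon$; this already suffices for the intended application, where one only needs a single good radius $r_n\in A$ (chosen, using $|A|>0$, to avoid the at most countably many circles carrying a topological singularity of some $g_k$) together with a subsequence along which the boundary integral stays bounded. The hard part will be to make the subsequence $\Lambda$ the \emph{same} for all $r\in A$ at once, since a pointwise $\liminf$ only provides an $r$-dependent selection. The natural route is to first pass to a subsequence along which $I_n\to I$ in measure (or a.e.) on $A$ and then invoke Egorov's theorem to restrict to a further subset $A'\subset A$ of positive measure on which the convergence is uniform, so that a single tail $\Lambda$ gives $I_n(r)\le C_\varepsilon$ simultaneously for all $r\in A'$ and all $n\in\Lambda$. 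I expect this uniformization to be the genuine obstacle: the weak $L^1$ bound alone does not force any such compactness of $(I_n)_n$, so one must either accept the per-radius selection (which is what the application actually exploits) or supply additional structure to secure the common subsequence.
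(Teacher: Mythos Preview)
Your argument is essentially the same as the paper's, just phrased directly rather than by contradiction: the paper assumes that for a.e.\ $r\in(R,1)$ there is $N(r)$ with $I_n(r)>C_\varepsilon$ for all $n\ge N(r)$, sets $A(n)=\{r:N(r)\le n\}$, and obtains $C\ge |A(n)|\,C_\varepsilon\to(1-R)\,C_\varepsilon>C$. Unwinding this contradiction is exactly your Fatou--Chebyshev step, since the assumed negation is precisely ``$\liminf_n I_n(r)>C_\varepsilon$ a.e.''.

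Your caution about the common subsequence is well placed, and in fact applies equally to the paper's own proof: the sentence ``then for a.e.\ $r$ there exists $N(r)$ such that for all $n\ge N(r)$ \dots'' is the negation of the \emph{per-radius} statement (positive-measure set of $r$ with $I_n(r)\le C_\varepsilon$ infinitely often), not of the stated lemma with a single $\Lambda$ valid for every $r\in A$. So neither argument, as written, delivers the uniform $\Lambda$; both establish the weaker conclusion $|\{r:\liminf_n I_n(r)\le C_\varepsilon\}|>0$. As you correctly observe, this weaker version is exactly what the application in the proof of Theorem~I.1 uses: one picks a single good radius (avoiding the countably many singular circles) and then extracts a subsequence along it. Your Egorov idea would indeed require extra compactness that is not available from the $L^1$ bound alone, so you should simply state and prove the per-radius version and not chase the uniform one.
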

\begin{proof}
Assume by contradiction that the statement is false.
Then for a.e. $r\in (R,1)$ there exists $N(r)$ such that for any $n\geq N(r)$
\begin{align*}
\int_{\partial D^2_r}u_n> C_\varepsilon
\end{align*}
For any $n\in \mathbb{N}$ set
\begin{align*}
A(n)=\{r\in (R,1)\text{ s.t. } N(r)\leq n\}.
\end{align*}
Then $A(n)\subset A(m)$ for any $n,m\in \mathbb{N}$ with $n\leq m$ and
\begin{align*}
\left\lvert \bigcup_{n\in \mathbb{N}} A(n)\right\rvert=1-R.
\end{align*}
Observe that for any $n\in \mathbb{N}$
\begin{align*}
C\geq \int_{A(n)}\left(\int_{\partial D^2_r}u_n\right)dr\geq \lvert A(n)\rvert C_\varepsilon.
\end{align*}
If we let $n$ tend to infinity we obtain
\begin{align*}
C\geq C_\varepsilon(1-R),
\end{align*}
a contradiction.
\end{proof}


\begin{lem}\label{Lemma: H12 convergence for composition with Lipschitz}
Let $(\varphi_n)_{n\in \mathbb{N}}$ be a sequence in $H^\frac{1}{2}(\partial D^2)$ converging in $H^\frac{1}{2}(\partial D^2)$ to a function $\varphi$, i.e.
\begin{align*}
\varphi_n\to \varphi\text{ in }H^\frac{1}{2}(\partial D^2).
\end{align*}
Let
$$
F: \mathbb{R}\to\mathbb{R}^n
$$
be a Lipschitz-continuous function. Then
\begin{align*}
F\circ\varphi_n\to F\circ\varphi\text{ in }H^\frac{1}{2}(\partial D^2, \mathbb{R}^n).
\end{align*}
\end{lem}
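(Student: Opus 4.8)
The plan is to work directly with the Gagliardo description of the fractional norm on the circle $\partial D^2$: writing
\[
\|u\|_{H^\frac{1}{2}}^2=\|u\|_{L^2(\partial D^2)}^2+[u]_{H^\frac{1}{2}}^2,\qquad [u]_{H^\frac{1}{2}}^2=\int_{\partial D^2}\int_{\partial D^2}\frac{|u(x)-u(y)|^2}{|x-y|^2}\,dx\,dy,
\]
and splitting the target estimate into the $L^2$ part and the seminorm part. Denote $g_n:=F\circ\varphi_n$ and $g:=F\circ\varphi$, and let $L$ be a Lipschitz constant for $F$. The $L^2$ part is immediate: $\|g_n-g\|_{L^2}\le L\|\varphi_n-\varphi\|_{L^2}$, and since $H^\frac{1}{2}(\partial D^2)$ embeds continuously in $L^2(\partial D^2)$ the right-hand side tends to $0$.

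The substance is the seminorm estimate $[g_n-g]_{H^\frac{1}{2}}\to0$. First I would reduce to a.e. convergence: since $\varphi_n\to\varphi$ in $H^\frac{1}{2}$, along any subsequence one can extract a further subsequence with $\varphi_n\to\varphi$ a.e. on $\partial D^2$; as the limit is always $\varphi$, it suffices to prove the seminorm convergence under the extra assumption $\varphi_n\to\varphi$ a.e., and then recover the claim for the full sequence by the usual subsequence-of-a-subsequence principle. Set
\[
f_n(x,y):=\frac{\big|(F(\varphi_n(x))-F(\varphi(x)))-(F(\varphi_n(y))-F(\varphi(y)))\big|^2}{|x-y|^2}\ge0,
\]
so that $[g_n-g]_{H^\frac{1}{2}}^2=\int\int f_n$. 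By continuity of $F$ and the a.e. convergence of $\varphi_n$, we have $f_n\to0$ a.e. on $\partial D^2\times\partial D^2$.

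The hard part is that $f_n$ admits no uniform integrable majorant, so ordinary dominated convergence does not apply. I would resolve this with the generalized dominated convergence theorem (Pratt's lemma). Using the Lipschitz bound in the form $|F(\varphi_n(x))-F(\varphi_n(y))|\le L|\varphi_n(x)-\varphi_n(y)|$ (and likewise for $\varphi$), one gets
\[
0\le f_n\le 2L^2\,\frac{|\varphi_n(x)-\varphi_n(y)|^2+|\varphi(x)-\varphi(y)|^2}{|x-y|^2}=:G_n.
\]
By the a.e. convergence $\varphi_n\to\varphi$ one has $G_n\to G:=4L^2|\varphi(x)-\varphi(y)|^2/|x-y|^2$ a.e., while $\int\int G_n=2L^2\big([\varphi_n]_{H^\frac{1}{2}}^2+[\varphi]_{H^\frac{1}{2}}^2\big)\to4L^2[\varphi]_{H^\frac{1}{2}}^2=\int\int G<\infty$, because $H^\frac{1}{2}$-convergence forces $[\varphi_n]_{H^\frac{1}{2}}\to[\varphi]_{H^\frac{1}{2}}$.

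Finally, Pratt's lemma applied to $0\le f_n\le G_n$ with $f_n\to0$ a.e., $G_n\to G$ a.e. and $\int\int G_n\to\int\int G<\infty$ yields $\int\int f_n\to0$, i.e. $[g_n-g]_{H^\frac{1}{2}}\to0$. Combining this with the $L^2$ part and undoing the subsequence reduction gives $F\circ\varphi_n\to F\circ\varphi$ in $H^\frac{1}{2}(\partial D^2,\mathbb{R}^n)$, as desired. The only delicate point is the passage to a dominating sequence whose integrals converge rather than a fixed dominating function; everything else is routine.
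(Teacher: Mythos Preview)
Your proof is correct and follows essentially the same route as the paper's: both work with the Gagliardo seminorm, use the Lipschitz estimate $|F(\varphi_n(x))-F(\varphi_n(y))|\le L|\varphi_n(x)-\varphi_n(y)|$ to bound the difference quotient, and conclude via a subsequence--of--subsequence argument. The only variation is in the convergence lemma invoked: the paper observes that $H^{1/2}$ convergence of $\varphi_n$ forces $|\varphi_n(x)-\varphi_n(y)|^2/|x-y|^2\to|\varphi(x)-\varphi(y)|^2/|x-y|^2$ in $L^1(\partial D^2\times\partial D^2)$, extracts along a subsequence a fixed $L^1$ majorant $B$, and then applies ordinary dominated convergence, whereas you keep the $n$-dependent majorant $G_n$ and appeal to Pratt's lemma (using $\int G_n\to\int G$). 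These two devices are equivalent here, so the arguments are interchangeable.
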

\begin{proof}
It is clear that
\begin{align*}
F\circ\varphi_n\to F\circ\varphi\text{ in }L^2(\partial D^2, \mathbb{R}^n).
\end{align*}
We would like to show that
\begin{align}\label{Equation: Desired H12 convergence}
\lim_{n\to\infty}\int_{\partial D^2}\int_{\partial D^2}\frac{\lvert (F(\varphi_n(x))-F(\varphi(x)))-(F(\varphi_n(y))-F(\varphi(y)))\rvert^2}{\lvert x-y\rvert^2}dxdy.
\end{align}
Observe that for a.e. $(x,y)\in \partial D^2\times \partial D^2$.
\begin{align*}
\left\lvert (F(\varphi_n(x))-F(\varphi(x)))-(F(\varphi_n(y))-F(\varphi(y)))\right\rvert\leq L\left(\lvert \varphi_n(x)-\varphi_n(y)\rvert+\lvert \varphi(x)-\varphi(y)\rvert\right),
\end{align*}
where $L$ denotes the Lipschitz constant of the function $F$.\\
Now since
\begin{align*}
\varphi_n\to \varphi\text{ in }H^\frac{1}{2}(\partial D^2),
\end{align*}
there holds
\begin{align*}
\left\lvert\frac{\varphi_n(x)-\varphi_n(y)}{\lvert x-y\rvert}\right\rvert^2\to \left\lvert\frac{\varphi(x)-\varphi(y)}{\lvert x-y\rvert}\right\rvert^2\text{ in }L^1(\partial D^2\times\partial D^2).
\end{align*}
Therefore there exists a function
$$B\in L^1(\partial D^2\times\partial D^2)$$
and a subsequence of $(\varphi_n)_{n\in \mathbb{N}}$, say indexed by $\Lambda\subset\mathbb{N}$, so that for any $n\in \Lambda$
\begin{align*}
\left\lvert\frac{\varphi_n(x)-\varphi_n(y)}{\lvert x-y\rvert}\right\rvert^2\leq B(x,y)\text{ a.e. in }\partial D^2\times\partial D^2.
\end{align*}
Thus by Dominated Convergence (\ref{Equation: Desired H12 convergence}) holds true for a subsequence. Since this argument can be repeated for any subsequence of $(\varphi_n)_{n\in \mathbb{N}}$, the statement holds true.
\end{proof}

\end{document}